\newcommand{\VV}{\mathbb{V}}
\DeclareMathOperator{\Ann}{Ann}
\newcommand{\RR}[0]{\mathbb{R}}  
\newcommand{\CC}[0]{\mathbb{C}}  
\newcommand{\NN}[0]{\mathbb{N}}  
\newcommand{\WW}[0]{\mathbb{W}}  
\newcommand{\ZZ}[0]{\mathbb{Z}}  
\newcommand{\DD}{\mathbb{D}}     
\newcommand{\TT}{\mathbb{T}}     
\newcommand{\inner}[2]{ \langle #1 , #2 \rangle} 
\DeclareMathOperator{\ran}{ran}
\newcommand{\pd}[0]{\partial} 
\newcommand{\sbs}{\subset}
\newcommand{\sbse}{\subseteq}
\newcommand{\frk}[1]{\mathfrak{#1}} 
\newcommand{\cc}[1]{\overline{#1}} 
\newcommand{\of}{\circ} 
\DeclareMathOperator{\Hom}{Hom}   
\newcommand{\mc}[1]{\mathcal{#1}} 
\newcommand{\bksl}{\backslash}
\newcommand{\gra}{\alpha}
\newcommand{\grb}{\beta}
\newcommand{\grd}{\delta}
\newcommand{\grg}{\gamma}
\newcommand{\grk}{\kappa}
\newcommand{\grl}{\lambda}
\newcommand{\grs}{\sigma}
\newcommand{\grw}{\omega}
\newcommand{\grz}{\zeta}
\newcommand{\grD}{\Delta}
\newcommand{\grG}{\Gamma}
\newcommand{\grJ}{\Theta}
\newcommand{\grW}{\Omega}
\theoremstyle{plain}
\newtheorem{theorem}{Theorem}[section]
\newtheorem{lemma}[theorem]{Lemma}
\newtheorem{proposition}[theorem]{Proposition}
\newtheorem{corollary}[theorem]{Corollary}
\newtheoremstyle{named}{}{}{\itshape}{}{\bfseries}{.}{.5em}{#2. \thmnote{#3}}
\theoremstyle{named}
\theoremstyle{definition}
\newtheorem{example}[theorem]{Example}
\theoremstyle{remark}
\title{A classification of $n$-tuples of commuting shifts of finite multiplicity}
\author{Edward J. Timko}
\date{}
\begin{document}

\maketitle
\begin{abstract}
  Let $\VV$ denote an $n$-tuple of shifts of finite multiplicity, and denote by $\Ann(\VV)$ the ideal consisting of polynomials $p$ in $n$ complex variables such that $p(\VV)=0$. If $\WW$ on $\frk{K}$ is another $n$-tuple of shifts of finite multiplicity, and there is a $\WW$-invariant subspace $\frk{K}'$ of finite codimension in $\frk{K}$ so that $\WW|\frk{K}'$ is similar to $\VV$, then we write $\VV\lesssim \WW$. If $\WW\lesssim \VV$ as well, then we write $\WW\approx \VV$.
  
  In the case that $\Ann(\VV)$ is a prime ideal we show that the equivalence class of $\VV$ is determined by $\Ann(\VV)$ and a positive integer $k$. More generally, the equivalence class of $\VV$ is determined by $\Ann(\VV)$ and an $m$-tuple of positive integers, where $m$ is the number of irreducible components of the zero set of $\Ann(\VV)$.
\end{abstract}

\section{Introduction}

An isometry $V$ on a (complex) Hilbert space $\mathfrak{H}$ is called a \emph{shift} of \emph{multiplicity} $k$ when $V$ is unitarily equivalent to the standard unilateral shift on $\ell^2 (\NN) \otimes \CC^k$ for some positive integer $k$. By the von Neumann-Wold theorem, $V$ is a shift of multiplicity $k$ if and only if $\bigcap_{j = 0}^{\infty} V^j\mathfrak{H}= \{ 0 \}$ and $\dim \ker V^{\ast} = k$. For the entirety of this paper, we fix an integer $n\geq 2$. Given an $n$-tuple $\VV= ( V_1,\ldots, V_n)$ of commuting shifts of finite multiplicity, the \emph{annihilator} of $\VV$ is defined to be the polynomial ideal
\[ \Ann ( \VV) = \{ p \in \CC [ x_1, \ldots, x_n] : p ( \VV) = 0 \} , \]
It is known from \cite[Prop. 6.3]{Timko} that $\Ann ( \VV)$ is a non-trivial ideal and that it determines a variety of pure dimension 1. We briefly review these facts in Section 2.

Suppose that $\VV$ and $\WW$ are $n$-tuples of commuting shifts of finite multiplicity on Hilbert spaces $\mathfrak{H}$ and $\mathfrak{K}$, respectively. If there exists a $\WW$-invariant subspace $\mathfrak{K}'$ of finite codimension in $\frk{K}$ such that $\WW|\mathfrak{K}'$ is similar to $\VV$, then we write $\VV \lesssim \WW$. We say that $\VV$ and $\WW$ are \emph{virtually similar} and write $\VV\approx \WW$ when $\WW \lesssim \VV$ and $\VV \lesssim \WW$. 

The finite multiplicity of the elements of $\VV$ implies that $\VV$ has a finite cyclic set. In particular, there exists a set $\{h_1, \ldots, h_k\}\sbs \mathfrak{H}$ of least cardinality such that the subspace $\bigvee_{i = 1}^k \{ p (\VV) h_i : p \in \CC [ x_1, \ldots, x_n] \}$ has finite codimension in $\mathfrak{H}$. We call $k$ the \emph{virtually cyclicity} of $\VV$, and denote it by $\grk(\VV)$.

For the case in which $\Ann ( \VV)$ is a prime ideal, we show in Theorem \ref{MainThm} that $\VV$ and $\WW$ are virtually similar if and only if $\Ann ( \VV) = \Ann ( \WW)$ and $\kappa ( \VV) = \kappa ( \WW)$. When $\Ann ( \VV)$ is not prime, we have the following characterization. The ideal $\Ann (\VV)$ is radical and is the intersection of a unique finite set of prime ideals $\mathcal{I}_1, \ldots, \mathcal{I}_m$. For $i=1,\dots,m$, there exists a largest $\VV$-invariant subspace $\frk{H}_i^+$ such that $\Ann(\VV|\frk{H}^+_i)=\mc{I}_i$; the subspaces $\mathfrak{K}_1^+, \mathfrak{K}_2^+, \ldots$ are analogously defined. We show in Theorem \ref{GenThm} that $\VV \approx \WW$ if and only if $\Ann (\VV) = \Ann (\WW)$ and $\kappa(\VV|\mathfrak{H}_i^+) = \kappa ( \WW|\mathfrak{K}_i^+)$ for $i = 1, \ldots, m$.

We remark that virtual similarity is stronger than necessary, and that `virtual quasi-similarity' suffices. That is, it is sufficient that there exist injective operators $X:\frk{H}\to\frk{K}$ and $Y:\frk{K}\to\frk{H}$ intertwining $\VV$ with $\WW$ such that $(\ran X)^\bot$ and $(\ran Y)^\bot$ are finite dimensional.

The remainder of this paper is organized as follows. In section 2 we set notation and establish some preliminary results. In particular, we show that every $n$-tuple of commuting shifts of finite multiplicity has a non-trivial annihilator. In section 3 we extend some results from \cite{AnD} to provide a characterization of the $H^{\infty} ( R)$-invariant subspaces of the vector valued Hardy space $H^2(R,\frk{X})$, where $R$ is a sufficiently `nice' sub-domain of a compact Riemann surface. The main results discussed above appear in section 4. In section 5 we comment on another notion of equivalence for $n$-tuples that we call \emph{virtual unitary equivalence}, where the similarities appearing in the definition of virtual similarity are replaced by unitary maps. It is an open question, first essentially raised in \cite{AKM}, whether virtually similar $n$-tuples are also virtually unitarily equivalent. We show that this is true in a special case.

We would like to thank Hari Bercovici for his comments and support during the preparation of this document. We would also like to thank Greg Knese, Norm Levenberg, Noah Snyder, and Alberto Torchinsky for their conversations with the author. 

\section{Preliminaries}

We start this section by reviewing why $n$-tuples of commuting shifts of finite multiplicity are always `algebraic', the proof of which is included for the readers convenience. Before this, we require the following result based on work in \cite{AgMcActa}. Here and afterward, $\DD$ denotes the open unit disc in $\CC$. 

\begin{proposition}\label{AKMGetPoly}
  Let $V_1, V_2$ be a pair of commuting shifts of finite multiplicity. There exist relatively prime polynomials $p ( x_1, x_2)$ and $q ( x_2)$ with the following properties.
  \begin{enumerate}
    \item[\rm{(i)}] $p(V_1,V_2)=0$ and the zero set of $p$ is contained in $\DD^2\cup(\pd\DD)^2\cup(\CC\bksl\overline{\DD})^2$.
    \item[\rm{(ii)}] $q$ has no zeros in the closed disc $\overline{\DD}$.
    \item[\rm{(iii)}] The polynomial $p$ is of the form
    \[ p(x_1,x_2)=q(x_2)x_1^d+a_1(x_2)x_1^{d-1}+\cdots+a_d(x_2) \]
    for some single-variable polynomials $a_1,\dots,a_d$.
  \end{enumerate}
\end{proposition}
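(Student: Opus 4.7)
The plan is to realize $V_2$ concretely as multiplication by $z$ on a vector-valued Hardy space, write $V_1$ as multiplication by a rational matrix-valued inner function, and then apply the Cayley--Hamilton theorem to produce the desired annihilating polynomial. First I would set $k=\dim\ker V_2^*$ and identify the underlying Hilbert space with $H^2(\DD,\CC^k)$ in such a way that $V_2=M_z$. Because the commutant of $M_z$ on $H^2(\DD,\CC^k)$ consists of multiplications by $M_k(\CC)$-valued $H^\infty$-functions, $V_1=M_\Phi$ for some $\Phi\in H^\infty(\DD,M_k(\CC))$. The isometry of $V_1$ forces $\Phi(e^{it})$ to be unitary a.e.\ on $\pd\DD$, so $\Phi$ is inner; the finite multiplicity of $V_1$ makes the model space $H^2(\DD,\CC^k)\ominus\Phi H^2(\DD,\CC^k)$ finite-dimensional, and Potapov's factorization theorem then guarantees that $\Phi$ is a finite Blaschke--Potapov product, hence a rational matrix-valued function.

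Next I would write $\Phi(z)=\Psi(z)/\grv(z)$ with $\Psi$ a matrix polynomial and $\grv\in\CC[z]$ a scalar polynomial whose zeros lie in $\CC\setminus\overline{\DD}$ (the entries of $\Phi$ are bounded holomorphic functions on $\DD$, so the poles of their rational representations lie outside $\overline{\DD}$). The key construction is
\[
p_0(x_1,x_2):=\det\bigl(\grv(x_2)\,x_1\,I-\Psi(x_2)\bigr),
\]
a polynomial of degree $k$ in $x_1$ with leading coefficient $\grv(x_2)^k$. The Cayley--Hamilton identity gives $p_0(\Phi(z),z)=\grv(z)^k\chi_{\Phi(z)}(\Phi(z))=0$ for every $z\in\DD$, whence $p_0(V_1,V_2)=0$. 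Dividing $p_0$ and $\grv^k$ by their $\gcd$ in $\CC[x_2]$ and relabeling yields polynomials $p$ and $q$ of the form required in (iii) that are relatively prime; since the removed factor is a polynomial in $x_2$ with no zeros in $\overline{\DD}$, the corresponding operator is invertible and the reduced $p$ still annihilates $(V_1,V_2)$, giving (ii) and (iii).

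For (i), on the open set where $q(x_2)\neq 0$ the zero locus of $p$ consists of pairs $(x_1,x_2)$ with $x_1$ an eigenvalue of $\Phi(x_2)$. When $x_2\in\pd\DD$, $\Phi(x_2)$ is unitary and its eigenvalues lie on $\pd\DD$; when $x_2\in\CC\setminus\overline{\DD}$, the reflection identity $\Phi(z)^{-1}=\Phi(1/\overline{z})^*$ (valid for a rational inner function) places the eigenvalues of $\Phi(x_2)$ outside $\overline{\DD}$. The delicate case is $x_2\in\DD$, where I would argue that $\Phi(x_2)$ has no eigenvalue of modulus $1$: if $\Phi(z_0)v=\grl v$ for some $z_0\in\DD$ and some $v\neq 0$ with $|\grl|=1$, then $f(z)=\langle\Phi(z)v,v\rangle/\|v\|^2$ is holomorphic on $\DD$ with $|f|\leq 1$ and $f(z_0)=\grl$, so the maximum principle forces $f\equiv\grl$; Cauchy--Schwarz equality then gives $\Phi(z)v=\grl v$ for all $z\in\DD$, so the constant function with value $v$ is an eigenvector of $M_\Phi=V_1$ with unimodular eigenvalue, generating a one-dimensional $V_1$-reducing subspace on which $V_1$ is unitary---contradicting the shift property of $V_1$. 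Relative primeness rules out spurious vertical lines at the zeros of $q$, and (i) follows. The main obstacle is precisely this propagation-from-a-boundary-eigenvalue argument, which is where the shift hypothesis (rather than merely isometry) on $V_1$ is essential.
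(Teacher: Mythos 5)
Your overall strategy tracks the paper's quite closely: both realize $(V_1,V_2)$ on $H^2(\DD,\CC^k)$ with $V_2=M_z$ and $V_1=M_\Phi$ for a rational matrix inner function $\Phi$ (you get rationality via Blaschke--Potapov factorization, the paper via \cite[Thm. VI.3.1]{SzNagyFoias}), and both take $p$ to be a divisor of $\det(\varpi(x_2)x_1I-\Psi(x_2))$. The decisive difference is in how property (i) is obtained: the paper \emph{cites} \cite[Thm. 1.20]{AKM}, which is where the real work of establishing the distinguished-variety containment is done, whereas you attempt to prove (i) directly. Your maximum-modulus/Cauchy--Schwarz argument showing that $\Phi(x_2)$ has no unimodular eigenvalue when $x_2\in\DD$ is correct and nicely isolates where the shift (as opposed to isometry) hypothesis enters, and the boundary/reflection analysis for $x_2\in\pd\DD$ and $x_2\in\CC\setminus\overline\DD$ with $\varpi(x_2)\neq 0$ is sound.

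However, there is a genuine gap at the end of your argument for (i). You dismiss the locus $\{q(x_2)=0\}$ (more accurately, one must work with $\{\varpi(x_2)=0\}$, which may strictly contain $Z(q)$) with the remark that relative primeness ``rules out spurious vertical lines.'' That only shows $p(\cdot,c)\not\equiv 0$ when $\varpi(c)=0$; it does not show that the finitely many surviving zeros $(a,c)\in Z(p)$ with $|c|>1$ and $\varpi(c)=0$ satisfy $|a|>1$. Your eigenvalue analysis is silent there because $\Phi(c)$ is not defined, and a naive limiting argument only yields $|a|\geq 1$, leaving open the possibility $|a|=1$, which would put $(a,c)\in\pd\DD\times(\CC\setminus\overline\DD)$ and violate (i). To close this you need an additional argument — for instance: any branch of $Z(p)$ through such $(a,c)$ is not a horizontal line $\{x_1=a\}$ (since for $x_2\in\DD$ all eigenvalues of $\Phi(x_2)$ lie strictly inside $\DD$), so the projection to the $x_1$-coordinate is an open map on the branch, which would produce nearby points $(a',c')\in Z(p)$ with $|a'|<1$, $|c'|>1$, and $\varpi(c')\neq 0$, contradicting the reflection estimate. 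Without some such step, the claim ``(i) follows'' is not justified; this is precisely the content that the paper outsources to \cite[Thm. 1.20]{AKM}.
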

The core of the proof can be found in \cite[Thm.1.12]{AgMcActa} and \cite[Prop. 6.3]{Timko}, but we provide details here for completeness.

\begin{proof}[Proof of Prop. \ref{AKMGetPoly}] The pair $(V_1,V_2)$ is unitarily equivalent to the pair of Toeplitz operators $(T_{\grJ},T_{\grz\: I})$ acting on the vector-valued Hardy space $H^2(\DD,\CC^k)$, where $k=\dim\ker V_2^*$, $\grz$ is the coordinate function on $\DD$, and $\grJ$ is a matrix-valued inner function. Because $\dim\ker V_1^*<\infty$, it follows from \cite[Thm. VI.3.1]{SzNagyFoias} that $\grJ$ has rational entries. There are clearly polynomials $P(x_1,x_2)$ and $Q(x_2)$ such that $\det(w\cdot I-\grJ(z))=P(w,z)/Q(z)$ and $P(V_1,V_2)=0$. It follows from \cite[Thm. 1.20]{AKM} that there is a non-zero polynomial $p$ satisfying property (i) that divides $P$. Let $q$ be such that property (iii) holds, and note that $q$ divides $Q$. It is clear that $Q$ has no roots on the closed unit disc, and therefore property (ii) holds.
\end{proof}

Given $p_1, \ldots, p_r \in \mathbb{C} [ x_1, \ldots, x_n]$, denote by $Z (p_1, \ldots, p_r)$ the set of all $z \in \mathbb{C}^n$ such that $p_1 ( z) = \ldots = p_r ( z) = 0$. More generally, if $S \subseteq \mathbb{C} [ x_1,\ldots, x_n]$, then we set $Z ( S) = \left\{ z \in \mathbb{C}^n : p ( z) = 0\text{ for all } p \in S \right\}$. Subsets of the form $Z ( S)$ are called \emph{algebraic varieties}, and we refer the reader to \cite{Kendig} for more information on this topic. We recall in particular that if $\mathcal{V}$ is an irreducible algebraic variety and $p$ is a non-trivial polynomial such that $\mathcal{V}\cap Z(p) \neq\emptyset$, then
\begin{equation}
  \dim ( \mathcal{V} \cap Z ( p)) \geq \dim ( \mathcal{V}) - 1.
  \label{DimIneqAlgGeo}
\end{equation}

\begin{corollary}
  Let $\mathbb{V}$ be an $n$-tuple of commuting shifts of finite multiplicity. Then $\Ann ( \mathbb{V})$ is
  non-trivial and $\dim Z ( \Ann ( \mathbb{V})) = 1$.
\end{corollary}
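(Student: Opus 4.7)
The strategy is to combine Proposition~\ref{AKMGetPoly} with a projection argument in algebraic geometry and a small amount of spectral theory. First, I would apply Proposition~\ref{AKMGetPoly} to each of the $n-1$ pairs $(V_1,V_j)$, $j=2,\ldots,n$, to produce non-trivial polynomials $p_j(x_1,x_j)$ with $p_j(V_1,V_j)=0$ and of the form described in property (iii). Regarded as elements of $\CC[x_1,\ldots,x_n]$, each $p_j$ lies in $\Ann(\VV)$, which immediately gives non-triviality of the annihilator.

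For the upper bound $\dim Z(\Ann(\VV))\leq 1$, set $Z=Z(\Ann(\VV))\sbse Z(p_2,\ldots,p_n)$ and consider the projection $\pi_1\colon Z\to\CC$ onto the first coordinate. For each $j\geq 2$, writing $p_j(x_1,x_j)=\sum_k c_{j,k}(x_1)\,x_j^k$, non-triviality of $p_j$ forces some $c_{j,k}$ to be a non-zero polynomial in $x_1$. Hence for all but finitely many $a\in\CC$, $p_j(a,x_j)$ is a non-trivial polynomial in $x_j$ with finitely many roots. For generic $a\in\pi_1(Z)$ the fibre $\pi_1^{-1}(a)$ is therefore constrained to a finite set of values of $(z_2,\ldots,z_n)$, which forces $\dim Z\leq 1$.

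For the matching lower bound I would invoke the polynomial spectral mapping theorem for the Taylor spectrum: if $p\in\Ann(\VV)$ then $p(\sigma_T(\VV))=\sigma(p(\VV))=\{0\}$, so $\sigma_T(\VV)\sbse Z(\Ann(\VV))$. Since the $i$-th coordinate projection of $\sigma_T(\VV)$ recovers $\sigma(V_i)=\overline{\DD}$, which is uncountable, $Z(\Ann(\VV))$ cannot be finite, ruling out $\dim Z=0$. The main obstacle to anticipate is precisely this lower bound: Hilbert's Nullstellensatz only yields non-emptiness of $Z(\Ann(\VV))$, so some genuine operator-theoretic input is required to exclude a finite zero set; the remainder of the argument is formal.
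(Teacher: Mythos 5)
Your proposal is correct and follows the same skeleton as the paper (apply Proposition~\ref{AKMGetPoly} to two-element subtuples, then pin the dimension to exactly one), but your lower-bound argument is genuinely different. For the upper bound, your coordinate-projection argument — that each $p_j(x_1,x_j)$ has a non-vanishing coefficient in $x_j$, so for generic $a\in\CC$ the fibre $\pi_1^{-1}(a)$ is finite, forcing $\dim Z\leq 1$ — is essentially the same structural fact that the paper invokes by ``repeated use of (\ref{DimIneqAlgGeo})''; your phrasing is arguably more transparent since (\ref{DimIneqAlgGeo}) by itself is a one-sided inequality. Where you diverge is the lower bound. The paper's argument is elementary: if $\dim Z(\Ann(\VV))=0$ then $Z(\Ann(\VV))$ is finite, and by the Nullstellensatz some power of $q(x_1)=\prod_j(x_1-z_1^{(j)})$ lies in $\Ann(\VV)$; but $q(V_1)$ (and every power of it) is injective for an isometry $V_1$ with no eigenvalues — a contradiction. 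You instead invoke the Taylor joint spectrum and its projection and spectral-mapping properties to get $\sigma_T(\VV)\subseteq Z(\Ann(\VV))$ with $\pi_i(\sigma_T(\VV))=\sigma(V_i)=\overline{\DD}$ uncountable, which also rules out a zero-dimensional variety. Both are sound; the paper's route avoids any multivariable spectral theory and is closer to self-contained, while yours makes the operator-theoretic obstruction to $\dim=0$ conceptually sharper at the cost of importing the machinery of the Taylor spectrum. One small point worth flagging: the paper's claim that a zero-dimensional $Z(\Ann(\VV))$ produces a single-variable $q(x_1)\in\Ann(\VV)$ strictly speaking needs either the (later-proved) radicality of $\Ann(\VV)$ or the harmless observation that a power of $q$ lands in the ideal, which still contradicts injectivity of $q(V_1)^N$; your spectral argument sidesteps this subtlety entirely.
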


\begin{proof}
  We apply Proposition \ref{AKMGetPoly} to $( V_1, V_n), \ldots, ( V_{n - 1}, V_n)$ to produce polynomials $p_1 ( x_1, x_n), \ldots, p_{n - 1} ( x_{n -  1}, x_n)$, respectively, in the annihilator $\Ann ( \mathbb{V})$. From repeated use of (\ref{DimIneqAlgGeo}) we have that $\dim Z ( p_1, \ldots, p_n) = 1$ and therefore $d = \dim Z ( \Ann (  \mathbb{V})) \leq 1$. If it were the case that $d = 0$, then there would be a non-zero single-variable polynomial $q(x_1)\in\Ann( \mathbb{V})$. But operators of the form $q ( V_1)$ are injective and so $d = 1$.
\end{proof}

We collect additional information about $\Ann(\VV)$ in the following proposition. 
\begin{proposition}
  Let $\VV$ be an $n$-tuple of commuting shifts of finite multiplicity. The following assertions hold.
  \begin{enumerate}
    \item[\rm{(i)}] $\Ann(\VV)$ is a radical ideal.
    \item[\rm{(ii)}] Each irreducible component of $Z(\Ann(\VV))$ has dimension $1$.
    \item[\rm{(iii)}] $Z(\Ann(\VV))\sbse \DD^n\cup\TT^n\cup(\CC\bksl\cc{\DD})^n$
  \end{enumerate}
\end{proposition}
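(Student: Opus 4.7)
The plan is to handle (i) first since it is the main obstacle; (ii) and (iii) then follow fairly directly. For (i), I would pass to a concrete operator model: $V_n$ being a shift of multiplicity $k := \dim\ker V_n^*$, identify $\frk{H}$ with $H^2(\DD,\CC^k)$ so that $V_n$ is multiplication by $\grz\cdot I$. Each $V_j$ with $j<n$ is then in the commutant of $V_n$, so the standard description of that commutant gives $V_j = M_{\grF_j}$ for some $\grF_j \in H^\infty(\DD, M_k(\CC))$; the isometry condition forces $\grF_j$ inner, making $\grF_j(\grz)$ unitary for a.e.\ $\grz \in \TT$, and $V_iV_j = V_jV_i$ passes to pointwise commutation. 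The key step is: if $p^k \in \Ann(\VV)$, then the multiplication symbol $P(\grz) := p(\grF_1(\grz),\dots,\grF_{n-1}(\grz), \grz I)$ satisfies $P(\grz)^k = 0$ a.e.\ on $\TT$; at such $\grz$, $P(\grz)$ is a polynomial in commuting normal matrices and is therefore normal, so nilpotence forces $P(\grz)=0$. Thus $P \equiv 0$ on $\TT$, giving $p(\VV) = 0$ and $p \in \Ann(\VV)$.

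For (ii), using (i) I write $\Ann(\VV) = \mc{I}_1 \cap \cdots \cap \mc{I}_m$ with $\mc{I}_j$ the distinct minimal primes over $\Ann(\VV)$, which correspond bijectively to the irreducible components of $Z(\Ann(\VV))$. The preceding corollary bounds each component dimension by $1$, so I need only rule out dimension zero. Suppose for contradiction $\mc{I}_1 = (x_1 - z_1^*, \dots, x_n - z_n^*)$ is maximal and set $\mc{I}' = \bigcap_{j \geq 2}\mc{I}_j$. Minimality forces $\mc{I}' \not\subseteq \mc{I}_1$ (otherwise $\mc{I}_1$ being prime would contain some $\mc{I}_j$ with $j \geq 2$, contradicting minimality), so I pick $r \in \mc{I}' \setminus \mc{I}_1$, for which $r(\VV) \neq 0$. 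But for each $j$, $(x_j - z_j^*)\,r \in \mc{I}_1 \mc{I}' \subseteq \Ann(\VV)$, so $(V_j - z_j^*)\,r(\VV) = 0$ and hence $\ran r(\VV) \subseteq \ker(V_j - z_j^*) = \{0\}$, the latter because a shift has no non-zero eigenvectors. This contradicts $r(\VV) \neq 0$.

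For (iii), I apply Proposition \ref{AKMGetPoly} to each pair $(V_j, V_n)$ with $j < n$, obtaining $p_j \in \Ann(\VV)$ with $Z(p_j) \subseteq \DD^2 \cup \TT^2 \cup (\CC \setminus \cc{\DD})^2$. Given $z \in Z(\Ann(\VV))$, $(z_j, z_n) \in Z(p_j)$ for each $j$; since $\DD$, $\TT$, and $\CC \setminus \cc{\DD}$ partition $\CC$, the unique region containing $z_n$ must also contain every $z_j$, placing $z$ in $\DD^n \cup \TT^n \cup (\CC\setminus\cc{\DD})^n$. Overall, the main obstacle is (i); once that is set up, (ii) is a short ideal-theoretic contradiction leveraging the absence of eigenvectors for a shift, and (iii) is essentially immediate from Proposition \ref{AKMGetPoly}.
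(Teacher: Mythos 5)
Your proof is correct, and for parts (i) and (ii) it takes a genuinely different (and more self-contained) route than the paper. For (i) the paper invokes subnormality abstractly: with $\widetilde{\VV}$ the minimal unitary extension, one has $\|p(\VV)\|=\|p(\widetilde{\VV})\|$, so $p^N(\VV)=0$ forces $p^N(\widetilde{\VV})=0$ and hence $p(\widetilde{\VV})=0$ by normality, giving $p\in\Ann(\VV)$. Your argument realizes exactly the same idea in the concrete $H^2(\DD,\CC^k)$ model: the a.e.\ boundary matrices $\Phi_1(\zeta),\dots,\Phi_{n-1}(\zeta),\zeta I$ are the boundary values of the minimal unitary extension, and the pointwise step ``$P(\zeta)$ normal and nilpotent implies $P(\zeta)=0$'' is the same normality step made explicit at each fiber. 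This is a legitimate trade: the paper's version is shorter but leans on the theory of subnormal tuples, while yours leans on the commutant lifting description $V_j=M_{\Phi_j}$ and a Fuglede argument to get normality of $p$ evaluated at commuting unitaries. One small presentational slip: you overload $k$, using it simultaneously for $\dim\ker V_n^*$ and for the nilpotency exponent in ``$p^k\in\Ann(\VV)$''; these are unrelated, and the argument should read ``$p^N\in\Ann(\VV)$ for some $N$''. For (ii), the paper simply cites \cite[Lemma 3.1]{Timko}; your ideal-theoretic argument (a $0$-dimensional minimal prime would yield a nonzero vector in $\ker(V_j-z_j^*)$, impossible since a shift has no eigenvectors) is a clean in-line replacement for that citation and is essentially the idea behind the cited lemma. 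Part (iii) matches the paper's (terse) proof, with the helpful added observation that $\DD$, $\TT$, $\CC\setminus\cc{\DD}$ partition $\CC$ so the region of $z_n$ pins down all the $z_j$'s.
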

\begin{proof}
  Part (i) follows from the fact that $\VV$ is a $n$-tuple of commuting subnormal operators. In particular, if $\widetilde{\VV}$ denotes the minimal unitary extension of $\VV$, then we have the identity $\|p(\VV)\|=\|p(\widetilde{\VV})\|$ for each $p\in\CC[x_1,\dots,x_n]$. Assertion (ii) follows from \cite[Lemma 3.1]{Timko}, which demonstrates that no irreducible component of $Z(\Ann(\VV))$ has dimension 0. To prove (iii), we apply Proposition \ref{AKMGetPoly}(i) repeatedly.
\end{proof}

A \emph{finite Riemann surface} is a subdomain $R$ of a compact Riemann surface with the property that $\pd R$ is locally a real analytic curve. In particular, $\pd R$ is a finite disjoint union of topological circles. If the annihilator is a prime ideal, we can desingularize $Z ( \Ann ( \mathbb{V})) \cap \mathbb{D}^n$ to a finite Riemann surface, the pertinent properties of which are summarized in the following proposition. For proof, we refer the reader to \cite[Sec. 3]{AKM} for the case of $n=2$ and \cite[Sec. 7.1]{Timko} for $n>2$.

\begin{proposition}\label{GetR} Assume that $\Ann ( \mathbb{V})$ is a prime ideal. There exists a finite Riemann surface $R$ and a continuous proper map $\xi$ from $\overline{R}$ onto $Z ( \Ann ( \mathbb{V})) \cap \overline{\mathbb{D}}^n$ such that
  \begin{enumerate}
    \item[\rm{(i)}] $\xi ( \partial R) = Z ( \Ann ( \mathbb{V})) \cap (
    \partial \mathbb{D})^n$;
    
    \item[\rm{(ii)}] $\xi |R$ is holomorphic onto $Z ( \Ann ( \mathbb{V}))
    \cap \mathbb{D}^n$; and
    
    \item[\rm{(iii)}] there is a cofinite subset $X$ of $\cc{R}$ such that $\xi|(X\cap R)$ is a biholomorphism onto its image and $\xi|(X\cap\pd R)$ is a diffeomorphism onto its image.
  \end{enumerate}
\end{proposition}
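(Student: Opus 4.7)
The plan is to construct $R$ as the normalization (desingularization) of the algebraic curve $Z(\Ann(\VV))$, restricted to the closed polydisc. Since $\Ann(\VV)$ is assumed to be prime, $Z(\Ann(\VV))$ is an irreducible affine algebraic variety of pure dimension $1$, that is, an irreducible affine algebraic curve in $\CC^n$. First I would pass to the projective closure $\cc{Z}\sbs\CC\PP^n$ and invoke the classical normalization theorem for algebraic curves to produce a compact Riemann surface $\wtil{Z}$ together with a continuous, proper, finite holomorphic map $\pi:\wtil{Z}\to\cc{Z}$ which is a biholomorphism off the preimage of the finite set $\cc{Z}_{\text{sing}}\cup\cc{Z}_\infty$ of singular points and points at infinity.

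Set $R=\pi^{-1}(Z(\Ann(\VV))\cap\DD^n)$ and $\cc{R}=\pi^{-1}(Z(\Ann(\VV))\cap\cc{\DD}^n)$ inside $\wtil{Z}$, and take $\xi=\pi|\cc{R}$. Properness of $\pi$ together with compactness of $Z(\Ann(\VV))\cap\cc{\DD}^n$ shows that $\cc{R}$ is compact; property (ii) is then immediate, since $\pi$ is holomorphic on all of $\wtil{Z}$. For property (i), the containment $Z(\Ann(\VV))\sbse\DD^n\cup\TT^n\cup(\CC\bksl\cc{\DD})^n$ established in the previous proposition forces the topological boundary of $Z(\Ann(\VV))\cap\DD^n$ inside $Z(\Ann(\VV))$ to sit entirely in $\TT^n$; pulling this back through the proper map $\pi$ yields $\xi(\pd R)=Z(\Ann(\VV))\cap\TT^n$. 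Property (iii) is then obtained by letting $X$ be $\cc{R}$ with the (finitely many) preimages of singular and boundary-singular points removed: on $X\cap R$ the map $\pi$ is a biholomorphism to its image, and on $X\cap\pd R$ it is a real-analytic diffeomorphism onto its image.

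The main obstacle is verifying that $\pd R$ is locally a real analytic curve, so that $R$ is a \emph{finite} Riemann surface in the sense defined. Fix $\wtil{w}\in\pd R$ and $w=\xi(\wtil{w})\in\TT^n$, and choose a local uniformizing coordinate $t$ on $\wtil{Z}$ vanishing at $\wtil{w}$; each coordinate function $x_i\of\pi$ is then holomorphic in $t$. Applying Proposition \ref{AKMGetPoly} to each pair $(V_i,V_j)$ produces polynomials in $\Ann(\VV)$ whose zero sets miss the mixed regions $\DD\times(\CC\bksl\cc{\DD})$ and $(\CC\bksl\cc{\DD})\times\DD$; hence as $t$ varies near $0$, the moduli $|x_i\of\pi(t)|$ all cross the value $1$ simultaneously. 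Together with the local power series expansion of $x_i\of\pi$ and the maximum principle, this forces the locus $\{t:|x_i\of\pi(t)|<1\text{ for all }i\}$ to coincide near $0$ with one side of a real-analytic arc through $0$, yielding local smoothness of $\pd R$. (This delicate local analysis at boundary points is carried out in detail in \cite[Sec. 3]{AKM} for $n=2$ and in \cite[Sec. 7.1]{Timko} for $n>2$, which I would cite rather than reprove.)
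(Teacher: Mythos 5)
Your sketch follows the same route as \cite[Sec. 3]{AKM} and \cite[Sec. 7.1]{Timko}, which is precisely what the paper does: it states the proposition and defers the entire proof to those two references, so in that sense your proposal is aligned with the paper's treatment. Two points are worth flagging, both of which are indeed handled in the cited references but are only implicit in your phrase about delicate local analysis, and which you should at least acknowledge. First, the paper's definition of a finite Riemann surface requires $R$ to be a \emph{subdomain}, hence connected; your $R=\pi^{-1}\bigl(Z(\Ann(\VV))\cap\DD^n\bigr)$ is an open subset of the connected compact surface $\widetilde{Z}$, but you give no argument for its connectedness. Second, your argument for (i) establishes only the containment $\xi(\pd R)\sbse Z(\Ann(\VV))\cap\TT^n$; the reverse inclusion requires that every point of $Z(\Ann(\VV))\cap\TT^n$ lie in the closure of $Z(\Ann(\VV))\cap\DD^n$, which again follows from the boundary analysis you defer but is not automatic from the containment $Z(\Ann(\VV))\sbse\DD^n\cup\TT^n\cup(\CC\bksl\cc{\DD})^n$ alone.
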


We note that $\xi = ( \xi_1, \ldots, \xi_n)$ is an $n$-tuple of analytic functions on $R$ with unimodular boundary values. The associated multiplication operators on $H^2(R)$ are thus isometries. Of course, we also have that $p\of\xi\equiv 0$ for each $p\in\Ann(\VV)$.

Denote by $A ( R)$ the algebra of continuous function on $\overline{R}$ that are analytic on $R$. We equip $A ( R)$ with the topology of uniform convergence on $\overline{R}$ and let $A_{\xi} ( R)$ denote the closed unital subalgebra of $A ( R)$ generated by $\xi_1, \ldots, \xi_n$. We identify the elements of $A ( R)$ with their boundary value functions on $\partial R$ and thus view $A(R)$ as a subalgebra of $C(\partial R)$.

The following result appears in \cite{AKM} for the case of $n=2$. We give here a different argument for $n\geq 2$. In what follows, we set $\NN_0=\NN\cup \{0\}$.

\begin{lemma}\label{GetQLem}
  There exists a non-zero single-variable polynomial $Q$ such that $Q ( \xi_n) A ( R) \subseteq A_{\xi} ( R)$.
\end{lemma}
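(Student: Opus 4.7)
The plan is to construct $Q$ as the discriminant of a carefully chosen primitive element of the finite algebraic extension $M(R)/\CC(\xi_n)$, where $M(R)$ denotes the field of meromorphic functions on $R$.

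By Proposition \ref{AKMGetPoly} applied to each pair $(V_i,V_n)$ with $i<n$, there exist polynomials $p_i(x_i,x_n) = q_i(x_n)x_i^{d_i} + a_{i,1}(x_n)x_i^{d_i-1} + \cdots + a_{i,d_i}(x_n) \in \Ann(\VV)$ with $q_i$ non-vanishing on $\overline{\DD}$. The relation $p_i(\xi_i,\xi_n)=0$, multiplied by $q_i(\xi_n)^{d_i-1}$, exhibits $\eta_i := q_i(\xi_n)\xi_i \in A_\xi(R)$ as a root of a monic polynomial over $\CC[\xi_n]$, so each $\eta_i$ is integral over $\CC[\xi_n]$. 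Proposition \ref{GetR}(iii) guarantees that $\xi$ is generically injective, which implies that $\xi_1,\ldots,\xi_n$ generate $M(R)$ over $\CC$; hence $M(R) = \CC(\xi_n)[\eta_1,\ldots,\eta_{n-1}]$ is a finite separable extension of $\CC(\xi_n)$.

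By the primitive element theorem, for a generic $(t_2,\ldots,t_{n-1})\in\CC^{n-1}$, the element $\alpha := \eta_1 + t_2\eta_2 + \cdots + t_{n-1}\eta_{n-1} \in A_\xi(R)$ is a primitive element of $M(R)/\CC(\xi_n)$, and is integral over $\CC[\xi_n]$ as a sum of integral elements. Let $m(x,\xi_n)\in\CC[x,\xi_n]$ be its monic minimal polynomial (of degree $d$ in $x$), and set $Q(x_n) := \operatorname{disc}_x m(x,x_n) \in \CC[x_n]$, which is non-zero by separability. Given $f\in A(R)\subseteq M(R)$, the unique decomposition $f = \sum_{j=0}^{d-1} c_j(\xi_n)\alpha^j$ together with the non-degenerate trace form $(u,v)\mapsto\Tr_{M(R)/\CC(\xi_n)}(uv)$ and Cramer's rule yield
$$Q(\xi_n) c_j(\xi_n) = F_j\big(\Tr(\alpha^i),\, \Tr(f \alpha^k)\big),$$
where $F_j$ is a polynomial with integer coefficients. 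The pure traces $\Tr(\alpha^i)$ lie in $\CC[\xi_n]$ by integrality of $\alpha$.

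The critical step is to show that the mixed traces $\Tr(f\alpha^k)$ lie in $A_\xi(R)$ whenever $f\in A(R)$. For this I would identify the algebraic trace with the fiberwise sum over the proper branched covering $\xi_n:\overline{R}\to K := \xi_n(\overline{R})$ supplied by Proposition \ref{GetR}; this sum yields a function on $K$ that is continuous and analytic on its interior, and whose pullback to $\overline{R}$ can be uniformly approximated by polynomials in $\xi_1,\ldots,\xi_n$ via a Runge- or Mergelyan-type argument. Granting this, $Q(\xi_n)c_j \in A_\xi(R)$ for every $j$, and since $\alpha\in A_\xi(R)$,
$$Q(\xi_n) f = \sum_{j=0}^{d-1} \big(Q(\xi_n)c_j(\xi_n)\big)\alpha^j \in A_\xi(R).$$
The principal obstacle is the approximation step: while the algebraic trace lands a priori only in $\CC(\xi_n)\cap A(R)$, placing it in $A_\xi(R)$ requires the analytic-geometric structure of the branched cover furnished by Proposition \ref{GetR}, together with uniform polynomial approximation of the fiberwise sum on $\overline{R}$.
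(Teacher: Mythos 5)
Your strategy is genuinely different from the paper's. The paper does not construct $Q$ directly: it establishes that $A_\xi(R)$ has finite codimension in $A(R)$ by passing to real parts of boundary algebras and invoking a commuting-square argument, then cites Gamelin's embedding theorem \cite[Thm.~9.8]{GamEmb} as a black box to produce $Q$. Your plan is to build $Q$ by hand as the discriminant of a primitive element and to extract the coefficients $c_j$ via the trace pairing and Cramer's rule, with an approximation argument at the end. In principle this is an attractive, more self-contained route, but as written it contains a substantive gap.

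The problem is the sentence ``$\xi$ is generically injective, which implies that $\xi_1,\ldots,\xi_n$ generate $M(R)$ over $\CC$; hence $M(R)=\CC(\xi_n)[\eta_1,\ldots,\eta_{n-1}]$ is a finite separable extension of $\CC(\xi_n)$,'' together with the subsequent ``$f\in A(R)\subseteq M(R)$'' and ``the unique decomposition $f=\sum_{j=0}^{d-1}c_j(\xi_n)\alpha^j$.'' If $M(R)$ is the field of \emph{all} meromorphic functions on the open surface $R$, this field is enormously larger than any finite extension of $\CC(\xi_n)$ (it contains, e.g., functions with essential singularities at $\partial R$), and generic injectivity of $\xi$ says nothing about generating it. If instead $M(R)$ is meant to be the function field of the projective completion of the curve $Z(\Ann(\VV))$ --- the field that $\xi_1,\ldots,\xi_n$ actually do generate --- then $A(R)$ is not contained in it, since a generic element of $A(R)$ is not algebraic over $\CC(\xi_n)$. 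Either way, the global decomposition of $f$ with rational $c_j(\xi_n)$ does not exist, so the trace-form computation in the form you state it is not available.

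The correct version of your idea treats $\xi_n:\overline R\to\overline\DD$ as the proper branched cover (which Proposition~\ref{GetR} does supply) and defines the fiberwise sums $\Tr(f\alpha^k)(w)=\sum_{\xi_n(z)=w}f(z)\alpha(z)^k$; these are disc-algebra functions of $w$, hence uniformly approximable by polynomials in $w$, so $\Tr(f\alpha^k)\circ\xi_n\in A_\xi(R)$. Combined with the fact that the fiberwise traces of $\alpha^i$ are polynomials in $\xi_n$ (because $\alpha$ extends meromorphically to the compactified curve and is integral over $\CC[\xi_n]$), Cramer's rule on the fiberwise Lagrange interpolation system does give $Q(\xi_n)c_j\in A_\xi(R)$ and hence $Q(\xi_n)f\in A_\xi(R)$. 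But you only identify this as ``the critical step'' and sketch the plan without carrying out the identification of the algebraic trace with the fiberwise sum, or verifying that $\xi_n|R$ realizes the full degree of the algebraic extension (so that the two notions of trace agree). As it stands the argument is incomplete precisely at the point you flag, and the field-theoretic framing leading up to it is incorrect.
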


\begin{proof}
  If $A_{\xi} ( R)$ has finite codimension in $A ( R)$, then the lemma follows from \cite[Thm. 9.8]{GamEmb}. Let $C_{\xi}(\partial R)$ denote the closed unital $\ast$-subalgebra of $C (\partial R)$ generated by $\xi_1, \ldots, \xi_n$. The map $\xi$ separates all but a finite set of points of $\partial R$, and thus $\mathrm{Re}\: C_{\xi}(\partial R)$ has finite (real) codimension in $\mathrm{Re}\: C ( \partial R)$. From the commuting square of inclusion maps
  \[ \begin{array}{ccc}
       \mathrm{Re}\: A_{\xi} ( R) & \rightarrow & \mathrm{Re}\: C_{\xi}(\partial R)\\
       \downarrow &  & \downarrow\\
       \mathrm{Re}\: A ( R) & \rightarrow & \mathrm{Re}\: C ( \partial R)
     \end{array}, \]
  we deduce that $A_{\xi}(R)$ has finite codimension in $A(R)$ if $\mathrm{Re}\: A_{\xi} ( R)$ has finite real codimension in $\mathrm{Re}\: C_{\xi}(\partial R)$.
  
  Let $j\in\{1,\dots,n-1\}$, and note that Corollary \ref{AKMGetPoly} provides single-variable polynomials $a_0^{(j)},\dots,a_{d_j}^{(j)}$ such that
  \[ a_{d_j}^{(j)}(\xi_n)\xi_j^{d_j}+\cdots+a_0^{(j)}(\xi_n)=0 \]
  and $a_{d_j}^{(j)}$ has no zeros in $\overline{\DD}$. With $S=\prod_{i=1}^{n-1}\{j\in\ZZ:|j|<d_i\}$ and $S_+=S\cap\NN_0^{n-1}$, we find that
  \[ A_\xi(R)= \bigvee\{\xi_1^{j_1}\cdots\xi_{n-1}^{j_{n-1}}\xi_n^\ell : (j_1,\dots,j_{n-1})\in S_+,\ell\in\NN_0\} \]
  and
  \[ C_\xi(\partial R ) = \bigvee\{\xi_1^{j_1}\cdots\xi_{n-1}^{j_{n-1}}\xi_n^\ell: (j_1,\dots,j_{n-1})\in S,\ell\in\ZZ\}. \]
  In particular, $\mathrm{Re}\: C_\xi(\partial R)$ is the closed linear span of elements of the form $\mathrm{Re}\: (\xi_1^{j_1}\cdots \xi_{n-1}^{j_{n-1}}\xi_n^\ell)$ and $\mathrm{Im}\: (\xi_1^{j_1}\cdots \xi_{n-1}^{j_{n-1}}\xi_n^\ell)$ with $(j_1,\dots,j_{n-1})\in S$ and $\ell\in\NN_0$. We note that
  \[ 1/\xi_j=-a_0^{(j)}(\xi_n)^{-1}(a_{d_j}^{(j)}(\xi_n)\xi_j^{d_j-1}+\cdots+a_1^{(j)}(\xi_n)). \]
  Set $r=(a_0^{(1)})^{d_1-1}\cdots (a_0^{(n-1)})^{d_{n-1}-1}$ and denote by $\delta$ the degree of $r$. For each $(j_1,\dots,j_{n-1})\in S$ and $\ell\in\NN_0$, there exists a polynomial $b_{j_1,\dots,j_{n-1},\ell}$, of degree at most $\delta-1$ in the $n$-th variable and at most $d_j-1$ in the $j$-th variable such that
  \[ \xi_1^{j_1}\cdots \xi_{n-1}^{j_{n-1}}\xi_n^\ell-\frac{b_{j_1,\cdots,j_{n-1},\ell}(\xi)}{r(\xi_n)}\in A_\xi(R). \]
  We conclude that $\mathrm{Re}\:
  A_{\xi} ( R)$ has at most finite codimension in $\mathrm{Re}\: C_{\xi}(\partial R)$.
\end{proof}

We conclude this section by setting notation. All Hilbert spaces are assumed to be complex and separable. Given an $n$-tuple $\mathbb{A}=(A_1,\dots,A_n)$ of operators on a common Hilbert space and $\beta\in\NN^n_0$, we set $\mathbb{A}^\beta=A_1^{\beta_1}\cdots A_n^{\beta_n}$. The general linear group of a Hilbert space $\mathfrak{X}$ is denoted by $\mathrm{GL} ( \mathfrak{X})$, and the unitary group of $\mathfrak{X}$ is denoted by $\mathrm{U} (\mathfrak{X})$.

Let $R$ be a finite Riemann surface and fix a point $x_0\in R$. We denote by $H^{\infty} ( R)$ the Hardy space of bounded analytic functions on $R$. For each analytic function $f:R\to \frk{X}$, denote by $u_f$ the least harmonic majorant of $x \mapsto \| f ( x)\|^2$. We denote by $H^2 ( R, \mathfrak{X})$ the space of all $\mathfrak{X}$-valued analytic functions $f$ for which $u_f ( x_0) < \infty$ and equip the $H^2(R,\frk{X})$ with the Hilbert space norm $\| f \| = \sqrt{u_f ( x_0)}$. Though the norm depends on the choice of $x_0$, the associated topology does not. Let $\grw$ denote harmonic measure at $x_0$. Then $L^2(\pd R,\frk{X})$ is the $\frk{X}$-valued $L^2$-space with norm $g\mapsto \left(\int_{\pd R}\|g(x)\|^2 d\grw(x)\right)^{1/2}$. We observe that every element of $H^2 ( R, \mathfrak{X})$ has $L^2$-boundary values defined $\omega$-a.e. on $\partial R$. We identify $H^2 (R, \mathfrak{X})$ with a subspace of $L^2 ( \partial R, \mathfrak{X})$ via the boundary value map. For details about Hardy spaces of Riemann surfaces, we refer the reader to \cite{Rudin}.

Given $f \in L^{\infty} ( \partial R)$, we denote by $M_f$ the operator of multiplication by $f$ on $L^2(\pd R,\frk{X})$. More generally, let $\frk{Y}$ be another Hilbert space and let $\Theta$ be a bounded (weakly) measurable $\mathcal{B} ( \mathfrak{X},\mathfrak{Y})$-valued function. We denote by $M_{\Theta}$ the map $L^2(\pd R,\frk{X})\ni f\mapsto \Theta f$.

\section{Remarks on $H^\infty$-invariant subspaces}

This section is devoted to adapting some results of {\cite{AnD}} to the setting of
finite Riemann surfaces. Most of the results we present here are
straightforward modifications of analogous results in {\cite{AnD}} that can
be demonstrated with essentially the same proofs. In such cases, we simply
state the result and refer the reader to the appropriate point in {\cite{AnD}}.

For this section, we fix a finite Riemann surface $R$ and a point $x_0\in R$. By
{\cite[Sec. IV.5]{FnK}} there is an analytic covering map $\tau : \mathbb{D}
\rightarrow R$ such that $\tau ( 0) = x_0$ and a group $G$ consisting of all
analytic self-maps $\gamma$ of $\mathbb{D}$ for which $\tau \circ \gamma =
\tau$. We call $G$ the \emph{deck transformation group} of $R$ and recall that $R$ is homeomorphic to $\DD/G$ when $G$ is endowed with the discrete topology. Given a collection of functions $\mathcal{F}$ on $\mathbb{D}$, we denote by $\mathcal{F}^G$ the set of all $f \in
\mathcal{F}$ such that $f \circ \gamma = f$ for each $\gamma \in G$. For
example, $H^{\infty} ( \mathbb{D})^G$ denotes the set of all $G$-invariant
bounded analytic functions on $\mathbb{D}$. The main result of this section
is a Beurling-type theorem for the `pure' $H^{\infty} (
\mathbb{D})^G$-invariant subspace of $L^2 ( \partial \mathbb{D},
\mathfrak{X})^G$, where $\mathfrak{X}$ is a Hilbert space.

We begin with a short discussion of analytic vector bundles over $R$. A
\emph{family of Hilbert spaces} over $R$ is a topological space $E$
together with a continuous projection $p : E \rightarrow R$ such that for each
$x \in R$ the fiber $E_x = p^{- 1} ( \{ x \})$ is a Hilbert space in the
topology inherited from $E$. Given a Hilbert space $\mathfrak{X}$, a
\emph{coordinate covering} for $E$ is a collection of pairs $\{ (
\varphi_i, U_i) \}_{i \in I}$ such that
\begin{enumerate}
  \item[\rm{(i)}] $\{ U_i \}_{i \in I}$ is an open covering of $R$;
  
  \item[\rm{(ii)}] $\varphi_i$ is a homeomorphism of $U \times \mathfrak{X}$ onto $p^{-
  1} ( U_i)$ for each $i$; and
  
  \item[\rm{(iii)}] for each $x \in R$ and $U_i \ni x$, the map $\varphi^x : \mathfrak{X}
  \rightarrow E_x$ given by $\varphi^x h = \varphi ( x, h)$ is a continuous
  linear isomorphism.
\end{enumerate}
A family of Hilbert spaces $E$ over $R$ with a coordinate covering
$\{ ( \varphi_i, U_i) \}_{i \in I}$ is called a \emph{vector bundle}. In this case $x\mapsto\dim E_x$ is constant on $R$; we call this constant the \emph{rank} of the bundle. A \emph{transition map} of the bundle is any map of the form $U_i \cap U_j \ni x \mapsto ( \varphi_i^x)^{- 1} \varphi_j^x$ for which $U_i \cap U_j \neq
\emptyset$. Observe that the transition maps take values in
$\mathrm{\mathrm{GL}} ( \mathfrak{X})$. The \emph{trivial bundle} is the
vector bundle $R \times \mathfrak{X}$ with the obvious projection map and the
single element coordinate covering provided by the identity map of $R \times
\mathfrak{X}$.

Let $E$ and $F$ be vector bundles over $R$ with fiber $\mathfrak{X}$. We say
that a homeomorphism $\Lambda$ from $E$ onto $F$ is a \emph{vector bundle
isomorphism} if for each $x \in R$ the restriction $\Lambda^x = \Lambda |E_x$
is a continuous linear isomorphism of $E_x$ onto $F_x$. We say that a bundle
is \emph{topologically trivial} if it is isomorphic to the trivial bundle.

If each transition map $x \mapsto ( \varphi_i^x)^{- 1} \varphi_j^x$ is an
analytic $\mathrm{GL} ( \mathfrak{X})$-valued function, then $E$ and the
coordinate covering $\{ ( \varphi_i, U_i) \}_i$ are said to be
\emph{analytic}. The trivial bundle over $R$ with fiber $\mathfrak{X}$ is
analytic in the obvious way. If $F$ is an analytic vector bundle over $R$ with
fiber $\mathfrak{X}$ and an analytic coordinate covering $\{ ( \psi_j, V_j)
\}_{j \in J}$, then a vector bundle isomorphism $\Lambda : E \rightarrow F$ is
\emph{analytic} if $\{ ( \psi_j, V_j) \}_{j \in J} \cup \{ ( \Lambda \circ
\varphi_i, U_i) \}_{i \in I}$ also provides an analytic coordinate covering
for $F$. We say that an analytic vector bundle is \emph{analytically trivial} is it analytically isomorphic to the
trivial bundle.

\begin{theorem}[{\cite[Thm. 8.2]{Bungart}}]\label{Bungart}
  Every analytic vector bundle over a non-compact Riemann surface is analytically trivial.
\end{theorem}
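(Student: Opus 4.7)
The plan is to combine elementary topology of non-compact Riemann surfaces with an Oka--Grauert principle for holomorphic bundles. I would prove the theorem in two main steps: first establish that $E$ is topologically trivial, then upgrade topological triviality to analytic triviality.

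For the first step, recall that a non-compact Riemann surface $R$ admits a strictly subharmonic exhaustion function; by Morse theory applied to a small perturbation of this function, $R$ has the homotopy type of a one-dimensional CW complex, and in particular $H^2(R,\ZZ)=0$. If the fiber $\frk{X}$ is finite-dimensional, then complex vector bundles over the paracompact space $R$ are classified up to isomorphism by their Chern classes, which must all vanish; hence $E$ is topologically trivial. If $\frk{X}$ is infinite-dimensional and separable, Kuiper's theorem asserts that $\mathrm{GL}(\frk{X})$ is contractible in the norm topology, so the classifying space $B\,\mathrm{GL}(\frk{X})$ is contractible and every principal $\mathrm{GL}(\frk{X})$-bundle over a paracompact base is topologically trivial.

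For the second step, I would invoke the Behnke--Stein theorem: every non-compact Riemann surface is a Stein manifold. On a Stein manifold the Oka principle, extended to complex Banach Lie structure groups in Bungart's own paper, asserts that the natural map from analytic to topological isomorphism classes of principal $G$-bundles is a bijection whenever $G$ is a complex Banach Lie group. Applied to $G=\mathrm{GL}(\frk{X})$, the topological triviality obtained in Step 1 lifts to analytic triviality, which is what we want.

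The main obstacle is the Oka principle itself; it is the genuine content of the theorem. Its proof proceeds by an inductive patching argument along an exhaustion of $R$ by compact Runge-type subsets, combining a Cartan-style splitting lemma for non-abelian $\mathrm{GL}(\frk{X})$-valued cocycles with a Runge-type approximation theorem for analytic maps into $\mathrm{GL}(\frk{X})$ on such compacta. In the infinite-dimensional setting the splitting lemma requires serious modification of Grauert's original finite-dimensional argument, and this is precisely the technical heart of Bungart's paper. Everything else in the proposed proof is either topological book-keeping or an invocation of classical surface theory.
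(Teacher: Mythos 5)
The paper does not prove this statement; it is cited verbatim as Theorem~8.2 of Bungart's paper, so there is no in-paper argument to compare yours against. Your outline is nevertheless the standard and essentially correct structure of Bungart's actual proof: establish topological triviality (in finite rank from $R$ having the homotopy type of a one-dimensional CW complex, so that maps into the simply connected space $B\,\mathrm{GL}_r(\CC)$ are null-homotopic; in infinite rank from Kuiper's contractibility of $\mathrm{GL}(\frk{X})$), then use Behnke--Stein to see $R$ is Stein and invoke a Grauert-style Oka principle for Banach Lie structure groups, which is the genuine content of Bungart's paper and which you candidly flag as the part you would not reprove. Two small cautions: saying complex vector bundles over a paracompact space are ``classified by Chern classes'' is not accurate in general --- the correct statement in finite rank is that a map from a space of homotopy dimension one into the $1$-connected classifying space is null-homotopic; and citing ``Bungart's own paper'' for the Oka principle makes the argument circular for the theorem at hand, which you acknowledge, so the proposal is best read as a structural sketch locating the technical core rather than a self-contained proof.
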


A coordinate covering for a vector bundle $E$ with fiber $\mathfrak{X}$ is
called a \emph{flat unitary} coordinate covering if the image of each
transition map is contained in $\mathrm{U} ( \mathfrak{X})$, in which case
$E$ is called a \emph{flat unitary vector bundle}. We note that the
transition maps of a flat unitary coordinate covering are always locally constant, and thus every flat unitary vector bundle over $R$ is also analytic. Let $F$ be another flat unitary
vector bundle over $R$ with fiber $\mathfrak{X}$, and let $\{ ( \varphi_i,
U_i) \}_{i \in I}$ and $\{ ( \psi_j, V_j) \}_{j \in J}$ be coordinate
coverings for $E$ and $F$, respectively. A vector bundle isomorphism
$\Lambda : E \rightarrow F$ is a \emph{flat unitary vector bundle
isomorphism} if $\{ ( \psi_j, V_j) \}_{j \in J} \cup \{ ( \Lambda \circ
\varphi_i, U_i) \}_{i \in I}$ is also a flat unitary coordinate covering for $F$.

In contrast to analytic equivalence, there exist flat unitary vector bundles which are not equivalent as flat unitary vector bundles. To be more precise, let $\pi_1 (R)$ denote the fundamental group of $R$, and write $\alpha_1 \sim \alpha_2$ for $\alpha_1, \alpha_2 \in \mathrm{Hom} ( \pi_1 ( R), \mathrm{U} (\mathfrak{X}))$ whenever there is a $U \in \mathrm{U} ( \mathfrak{X})$ such that $\alpha_1 = U \alpha_2 ( \cdot) U^{- 1}$. One readily verifies that $\sim$ determines an equivalence relation on $\mathrm{Hom} ( \pi_1 ( R),\mathrm{U} ( \mathfrak{X}))$. For the proof of the following theorem, we refer the reader to \cite[Lemma 27]{Gunning}.

\begin{theorem}\label{AnDThmB}
  There is a bijection between $\mathrm{Hom} ( \pi_1 ( R), \mathrm{U} ( \mathfrak{X})) / \sim$ and the set of equivalence classes of flat unitary vector bundles over $R$ with fiber $\mathfrak{X}$.
\end{theorem}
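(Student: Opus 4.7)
The plan is to establish the classical monodromy correspondence between flat bundles and unitary representations of $\pi_1(R)$, exploiting the fact that $\tau\colon\DD\to R$ is the universal cover and $G\cong\pi_1(R)$ is its deck group.

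For the direction ``bundle to representation,'' given a flat unitary bundle $E$ over $R$ with coordinate covering $\{(\varphi_i,U_i)\}$, I would pull $E$ back along $\tau$ to obtain a flat unitary bundle $\tau^*E$ over $\DD$. Because the transition maps of $\tau^*E$ are locally constant unitaries and $\DD$ is simply connected, a standard monodromy argument---analytically continuing a single local unitary trivialization along paths and using local constancy of the transition maps to ensure consistency---produces a global flat unitary trivialization $\Phi\colon\DD\times\mathfrak{X}\to\tau^*E$, unique up to right composition with a constant element of $\mathrm{U}(\mathfrak{X})$. The deck action of $G$ on $\DD$ lifts canonically to $\tau^*E$, and reading this lift through $\Phi$ produces a map $\alpha_E\colon G\to \mathrm{U}(\mathfrak{X})$; that it is a homomorphism is immediate from the uniqueness of $\Phi$. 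Replacing $\Phi$ by $\Phi(\cdot,U\cdot)$ for some $U\in\mathrm{U}(\mathfrak{X})$ conjugates $\alpha_E$ by $U$, so the class $[\alpha_E]\in\Hom(G,\mathrm{U}(\mathfrak{X}))/\sim$ is well-defined and independent of the original coordinate covering.

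For ``representation to bundle,'' given $\alpha\colon G\to\mathrm{U}(\mathfrak{X})$, I would form $E_\alpha=(\DD\times\mathfrak{X})/G$ under the free and properly discontinuous diagonal action $\gamma\cdot(z,v)=(\gamma z,\alpha(\gamma)v)$. Projection to the first factor descends to $E_\alpha\to\DD/G\cong R$. Over any evenly covered open set $U\sbse R$, choosing a local section of $\tau$ yields a local trivialization of $E_\alpha$, and passing between sections over different sheets multiplies the trivialization by a value of $\alpha$. The transition maps are therefore locally constant unitaries, so $E_\alpha$ is a flat unitary bundle, and conjugate representations give flat-unitarily isomorphic bundles via $[(z,v)]\mapsto[(z,Uv)]$; thus the assignment descends to equivalence classes.

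Finally, I would check that the two constructions are mutually inverse bijections on equivalence classes, which is the main technical point. The composition $E\mapsto \alpha_E\mapsto E_{\alpha_E}$ recovers $E$ because $\Phi$ intertwines the canonical $G$-action on $\tau^*E$ with the $\alpha_E$-action on $\DD\times\mathfrak{X}$, descending to a flat unitary isomorphism $E_{\alpha_E}\cong E$. The composition $\alpha\mapsto E_\alpha\mapsto \alpha_{E_\alpha}$ reads off $\alpha$ literally from the tautological trivialization of $\tau^*E_\alpha=\DD\times\mathfrak{X}$. The hard part is injectivity at the level of equivalence classes: given a flat unitary isomorphism $\Lambda\colon E\to F$ with global trivializations $\Phi_E,\Phi_F$ of the pullbacks, the composition $\Phi_F^{-1}\circ\tau^*\Lambda\circ\Phi_E$ is a flat unitary automorphism of the trivial bundle $\DD\times\mathfrak{X}$; the definition of flat unitary isomorphism forces the associated transition function $\DD\to\mathrm{U}(\mathfrak{X})$ to be locally constant, hence constant on the connected domain $\DD$. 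Comparing $G$-actions through $\Phi_E$ and $\Phi_F$ then exhibits this constant as a $U\in\mathrm{U}(\mathfrak{X})$ conjugating $\alpha_E$ to $\alpha_F$.
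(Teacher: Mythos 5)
The paper does not give its own proof of this theorem; it simply refers the reader to \cite[Lemma 27]{Gunning}. Your argument is the standard monodromy correspondence: pull a flat bundle back along the universal cover $\tau\colon\DD\to R$, use simple connectivity of $\DD$ plus locally constant unitary transition maps to produce a flat global trivialization unique up to a constant unitary, and read off the deck action of $G\cong\pi_1(R)$ through that trivialization to obtain a class in $\Hom(G,\mathrm{U}(\mathfrak{X}))/\sim$; the inverse construction $\alpha\mapsto(\DD\times\mathfrak{X})/G$ with the diagonal action is the usual associated-bundle construction. This is correct, and it is essentially the content of the cited reference. The one place where you are slightly terse is the assertion that the $z$-dependent map $(\Phi^{\gamma z})^{-1}\Phi^{z}$ is constant; to be airtight one should note that, on a pullback of any coordinate chart $U_i$ of $E$, both $(\Phi^{z})^{-1}\varphi_i^{\tau(z)}$ and $(\Phi^{\gamma z})^{-1}\varphi_i^{\tau(\gamma z)}=(\Phi^{\gamma z})^{-1}\varphi_i^{\tau(z)}$ are locally constant by flatness of $\Phi$, so their ratio is locally constant and hence constant on connected $\DD$; the identical argument gives the constancy you invoke for the automorphism $\Phi_F^{-1}\circ\tau^*\Lambda\circ\Phi_E$ in the injectivity step. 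With that observation filled in, the proof is complete.
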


Recall that a finite Riemann surface $R$ is a non-compact subdomain of a
compact Riemann surface for which $\partial R$ is locally an analytic curve.
In particular, $\partial R$ consists of a finite disjoint union of simple
closed analytic curves. From this we easily find another finite Riemann
surface $R' \supset \overline{R}$ such that $R$ is a deformation retract of
$R'$, implying in particular that $R'$ and $R$ have isomorphic fundamental
groups. Given a vector bundle $E'$ over $R'$ with fiber $\mathfrak{X}$ and
projection $p'$, the space $E' |R = \{ f \in E' : p' ( f) \in R \}$ is a
vector bundle over $R$ with fiber $\mathfrak{X}$ and projection $p' | ( E'
|R)$. As observed in \cite[Sec. 1.4]{AnD}, the following is a corollary to the proof of Theorem
3.2.

\begin{corollary}\label{AndExtend}
  If $E$ is a flat unitary vector bundle over $R$, then there is a flat
  unitary vector bundle $E'$ over $R'$ such that $E' |R$ and $E$ are
  equivalent as flat unitary vector bundles.
\end{corollary}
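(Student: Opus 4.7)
The plan is to exploit the classification of flat unitary vector bundles provided by Theorem \ref{AnDThmB} together with the fact that $R$ is a deformation retract of $R'$. Under these conditions, the inclusion $\iota:R\hookrightarrow R'$ induces an isomorphism $\iota_*:\pi_1(R)\to\pi_1(R')$, and so $\iota^*:\mathrm{Hom}(\pi_1(R'),\mathrm{U}(\frk{X}))\to\mathrm{Hom}(\pi_1(R),\mathrm{U}(\frk{X}))$ given by $\grg\mapsto\grg\of\iota_*$ is a bijection that descends to the quotients by the equivalence relation $\sim$.

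Given a flat unitary vector bundle $E$ over $R$ with fiber $\frk{X}$, the first step is to let $\gra\in\mathrm{Hom}(\pi_1(R),\mathrm{U}(\frk{X}))$ be a representative of the class assigned to $E$ by Theorem \ref{AnDThmB}. Using the isomorphism $\iota_*$, one defines $\gra'=\gra\of\iota_*^{-1}\in\mathrm{Hom}(\pi_1(R'),\mathrm{U}(\frk{X}))$, and invokes Theorem \ref{AnDThmB} on the surface $R'$ to produce a flat unitary vector bundle $E'$ over $R'$ whose equivalence class corresponds to the class of $\gra'$.

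The last step is to verify that $E'|R$ is flat unitarily equivalent to $E$. This amounts to checking that restriction of bundles is compatible with restriction of representations, namely that the class assigned to $E'|R$ by Theorem \ref{AnDThmB} is represented by $\gra'\of\iota_*=\gra$. This is clear from the construction of the bijection in \cite[Lemma 27]{Gunning}: the holonomy representation of $E'$ around a loop in $R$ coincides with the holonomy representation of $E'|R$ around the same loop, since parallel transport in a flat unitary bundle depends only on the bundle structure along the loop.

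The only real obstacle is the last compatibility check, which is purely a matter of unpacking the classification; once one knows that equivalence classes of flat unitary bundles correspond naturally to conjugacy classes of unitary representations of the fundamental group, and that the correspondence is functorial under continuous maps of base spaces, the extension from $R$ to $R'$ is automatic because $\iota_*$ is an isomorphism. The argument therefore reduces to citing Theorem \ref{AnDThmB} twice and noting that deformation retracts induce $\pi_1$-isomorphisms.
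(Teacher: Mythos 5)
Your proposal is correct, and it is essentially the argument the paper has in mind: the paper defers to \cite[Sec.\ 1.4]{AnD} as ``a corollary to the proof of Theorem 3.2,'' and what that amounts to is exactly what you spell out — apply the holonomy classification of Theorem \ref{AnDThmB} over both $R$ and $R'$, use the $\pi_1$-isomorphism coming from the deformation retract to transport the representation, and check naturality of the bijection under restriction. Your remark that one must look at the construction (holonomy / Gunning's Lemma 27) rather than merely the bijection statement is precisely why the paper says ``corollary to the \emph{proof}'' rather than ``corollary to the theorem.''
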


Let $E$ be a flat unitary vector bundle over $R$ with fiber $\mathfrak{X}$ and
a coordinate covering $\{(\varphi_i, U_i) \}_{i \in I}$, and fix a point
$x_0\in R$. Given $x\in U_i\cap U_j$ and $v\in E_x$, we note that $ \|(\varphi_i^x)^{-1}x \|=\|(\varphi_j^x)^{-1}v\|$. An \emph{analytic section} of $E$ is a
continuous map $f : R \rightarrow E$ such that $p \circ f$ is the identity map
on $R$ and $x \mapsto ( \varphi_i^x)^{- 1} \circ f$ is analytic on $U_i$ for
each $i \in I$. We denote by $\Gamma_a ( E)$ the linear space of analytic
sections of $E$. Given $f\in \grG_a(E)$, and define $h_f:R\to\RR$ by setting $h_f(x)=\|(\phi_i^x)^{-1}f(x)\|^2$ when $x\in U_i$. We note that $h_f(x)$ does not depend on which coordinate neighborhood of $x$ we use, and that $h_f$ has a least harmonic majorant $u_f$. The $E$-valued $H^2$ space of $R$ is the Hilbert space $H^2 ( E) = \{ f \in \Gamma_a ( E) : u_f ( x_0) < \infty \}$ with the norm given by $f\mapsto \|f\|=\sqrt{u_f(x_0)}$. We denote by $H^{\infty} ( R)$ the set of all bounded analytic functions on $R$, and note that $H^{\infty} ( R)$ acts on $H^2 ( E)$ by sending $( g, h) \in H^{\infty} ( R) \times H^2 ( E)$ to the analytic section $g h$.

Let $E$ and $F$ be flat unitary vector bundles over $R$ with fiber $\mathfrak{X}$. If $\Lambda : E \rightarrow F$ is a (uniformly) bounded analytic vector bundle isomorphism, then the operator $M_{\Lambda} : f \mapsto\Lambda\of f$ defines a bounded linear isomorphism from $H^2 ( E)$ onto $H^2 ( F)$. One can show, as in \cite[Thm. 1]{AnD}, that if $E$ and $F$ are equivalent flat unitary vector bundles, then there is unitary map $U$ from $H^2 ( E)$ onto $H^2 ( F)$ such that $U ( g h) = g U h$ for all $( g, h) \in H^{\infty} ( R) \times H^2 ( E)$. We now have the following result as corollary of Theorem \ref{Bungart} and Corollary \ref{AndExtend}.

\begin{corollary}
  \label{BundTrivCor}If $E$ is a flat unitary vector bundle over $R$ with fiber $\mathfrak{X}$, then there is a bounded analytic vector bundle isomorphism $\Lambda$ from the trivial bundle $R \times \mathfrak{X}$ onto $E$. In particular, $M_{\Lambda}$ is a bounded linear isomorphism of $H^2 ( R,  \mathfrak{X})$ onto $H^2 ( E)$.
\end{corollary}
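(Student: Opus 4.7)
The plan is to combine Corollary \ref{AndExtend} with Theorem \ref{Bungart} so that we can apply Bungart's analytic triviality result on an enlargement of $R$ where $\overline{R}$ sits compactly; this is the device that converts ``analytically trivial'' into ``boundedly analytically trivial'' on $R$.

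First I would invoke Corollary \ref{AndExtend} to obtain a finite Riemann surface $R' \supset \overline{R}$ and a flat unitary vector bundle $E'$ over $R'$, with fiber $\mathfrak{X}$, such that $E'|R$ and $E$ are equivalent as flat unitary vector bundles. Fix a flat unitary bundle isomorphism $\Phi: E'|R \to E$. Since every flat unitary bundle is automatically analytic (the transition maps are locally constant), $E'$ is an analytic bundle over the non-compact Riemann surface $R'$. Theorem \ref{Bungart} then produces an analytic vector bundle isomorphism $\Lambda': R' \times \mathfrak{X} \to E'$.

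Next I would produce the bounded structure. Fix coordinate neighborhoods from the flat unitary atlas of $E'$; each transition map takes values in $\mathrm{U}(\mathfrak{X})$, so the quantity $\|(\varphi_i^x)^{-1} \Lambda'^x\|_{\mathrm{GL}(\mathfrak{X})}$ and its inverse glue into continuous real-valued functions $x \mapsto \|\Lambda'^x\|$ and $x \mapsto \|(\Lambda'^x)^{-1}\|$ on $R'$, because the ambiguity in choosing a coordinate is unitary and hence norm-preserving. Since $\overline{R}$ is compact in $R'$, both functions are bounded on $\overline{R}$. Hence $\Lambda'|R$ is a bounded analytic vector bundle isomorphism from $R \times \mathfrak{X}$ onto $E'|R$ with bounded inverse. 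Composing with $\Phi$, the map $\Lambda := \Phi \circ (\Lambda'|R)$ is the desired bounded analytic vector bundle isomorphism from $R \times \mathfrak{X}$ onto $E$ (composition with a flat unitary isomorphism is fiberwise unitary and so preserves these bounds).

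For the final assertion, one checks that if $f \in H^2(R, \mathfrak{X})$, then $\Lambda \circ f \in \Gamma_a(E)$ because $\Lambda$ is an analytic bundle isomorphism, and $h_{\Lambda \circ f}(x) = \|(\varphi_i^x)^{-1} \Lambda^x f(x)\|^2 \leq \|\Lambda\|_\infty^2 \, \|f(x)\|^2$, so the least harmonic majorant controls: $u_{\Lambda \circ f}(x_0) \leq \|\Lambda\|_\infty^2\, u_f(x_0)$. Thus $M_\Lambda: H^2(R, \mathfrak{X}) \to H^2(E)$ is a bounded linear map, and the same argument applied to $\Lambda^{-1}$ shows $M_{\Lambda^{-1}}$ is bounded; these are mutually inverse, so $M_\Lambda$ is a bounded linear isomorphism. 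The main obstacle is the boundedness step: Bungart's theorem by itself provides only analytic (not necessarily bounded) triviality on $R$ itself, and the extension $R' \supset \overline{R}$ is precisely what converts analytic triviality into \emph{bounded} analytic triviality on $R$.
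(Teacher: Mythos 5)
Your proposal is correct and follows exactly the route the paper intends: the paper presents this corollary as an immediate consequence of Theorem \ref{Bungart} and Corollary \ref{AndExtend}, with the boundedness of $M_\Lambda$ and $M_{\Lambda^{-1}}$ coming from the fact, noted just before the corollary, that a uniformly bounded analytic bundle isomorphism induces a bounded linear isomorphism on the $H^2$ spaces. You have simply made explicit the compactness argument that gives uniform boundedness on $\overline{R}$ and the well-definedness of the fiberwise norm functions via unitarity of the transition maps, both of which are correct.
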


The group of deck transformations $G$ for $\tau:\DD\to R$ is a Fuchsian group of the second type that is isomorphic to $\pi_1 (R)$ \cite[\S IV.5]{FnK}. Associated with $G$ is a connected set $D_0 \subset \overline{\mathbb{D}}$, which we choose to contain $0$, with the following properties.
\begin{enumerate}
  \item[\rm{(i)}] The set $\partial \mathbb{D} \cap D_0$ consists of finitely many disjoint arcs in $\pd \DD$, and $\mathbb{D} \cap \partial D_0$ consists of finitely many arcs,
  each of which lies on a circle orthogonal to $\partial \mathbb{D}$.
  
  \item[\rm{(ii)}]  $\{ \gamma ( \mathbb{D} \cap D_0) : \gamma \in G \}$ partitions   $\mathbb{D}$ and $L ( G) = \partial\mathbb{D}\backslash\bigcup_{\gamma\in G} \gamma ( D_0)$ is a set of arc-length measure $0$. 
  
  \item[\rm{(iii)}] The map $\tau$ extends to a local homeomorphism from $D = \bigcup_{\gamma \in G} \gamma ( D_0)$ onto $\overline{R}$. In particular, $\mathbb{D}/ G$ is analytically equivalent to $R$ and $D / G$ is homeomorphic to $\overline{R}$. 
\end{enumerate}
We refer the reader to \cite[Ch. XI]{Tsuji} for more on this topic. From properties (ii) and (iii) it follows that $\int_{\partial R} u \,d \omega = \int_{\partial \mathbb{D}} u \circ \tau \,d m$ for all $u \in L^1 ( \partial R)$. In this way, the map $f \mapsto f \circ \tau$ determines an isometric isomorphism from $H^p( R)$ and $L^p ( \partial R)$ onto $H^p ( \mathbb{D})^G$ and $L^p ( \partial\mathbb{D})^G$, respectively, for each $p \in [ 1, \infty]$.

Let $\alpha \in \mathrm{Hom} ( G, \mathrm{U} ( \mathfrak{X}))$ and denote by $H^2_{\alpha} ( \mathbb{D}, \mathfrak{X})$ the set of all $f \in H^2 ( \mathbb{D}, \mathfrak{X})$ such that $f \circ \gamma = \alpha ( f) f$ for each $\gamma \in G$. We note that $H^2_{\alpha}( \mathbb{D}, \mathfrak{X})$ is $H^{\infty} ( \mathbb{D})^G$-invariant and that $H_{e}^2 ( \mathbb{D}, \mathfrak{X}) = H^2 ( \mathbb{D},\mathfrak{X})^G$, where $e$ denotes the trivial representation of $G$.

Theorem \ref{AnDThmB} asserts that $\alpha$ determines an essentially unique flat unitary vector bundle over $R$ with fiber $\mathfrak{X}$. Using a construction of such a bundle, the following theorem is deduced as in \cite[Thm. 5]{AnD}.

\begin{theorem}
  \label{DRIsom}If $\alpha \in \mathrm{Hom} ( G, \mathrm{U} ( \mathfrak{X}))$ and $E$ is the associated flat unitary vector bundle, then $H^{\infty} ( R)$ acting on $H^2_{\alpha}(E)$ is unitarily equivalent to $H^{\infty}(\mathbb{D})^G$ acting on $H^2_{\alpha} ( \mathbb{D}, \mathfrak{X})$.
\end{theorem}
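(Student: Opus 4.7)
The plan is to mimic the proof of \cite[Thm. 5]{AnD} by constructing $E$ concretely as a quotient of $\DD\times\frk{X}$ and then identifying sections with equivariant functions on $\DD$. More precisely, let $G$ act on $\DD\times\frk{X}$ by $\gamma\cdot(z,h)=(\gamma z,\gra(\gamma)h)$, and let $E$ be the quotient $(\DD\times\frk{X})/G$ with the evident projection onto $R=\DD/G$. The local sections of the quotient map $\DD\times\frk{X}\to E$, obtained from any fundamental-domain lifting, provide a flat unitary coordinate covering of $E$ whose transition maps are the values of $\gra$ at appropriate elements of $G$; this realizes the bundle singled out by Theorem \ref{AnDThmB}.

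Next I would introduce the lift operator $\grF\colon f\mapsto \widetilde{f}$, where for $f\in\grG_a(E)$ the function $\widetilde{f}\colon\DD\to\frk{X}$ is obtained by covering $\tau\colon\DD\to R$ with the local trivializations of $E$ and reading off the $\frk{X}$-coordinate. The equivariance of the trivializations forces $\widetilde{f}\of\gamma=\gra(\gamma)\widetilde{f}$ for every $\gamma\in G$, and conversely every $\gra$-equivariant analytic $\frk{X}$-valued function on $\DD$ descends to an analytic section. Because the transition maps take values in $U(\frk{X})$, the positive function $h_f$ used to define the $H^2$ norm of a section satisfies $h_f\of\tau=\|\widetilde{f}\|^2$, and in particular $\|\widetilde{f}(\cdot)\|^2$ is $G$-invariant on $\DD$. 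The least harmonic majorant of any $G$-invariant nonnegative subharmonic function on $\DD$ is itself $G$-invariant (if $u$ is the least majorant, then $u\of\gamma$ is also a majorant, hence $u\of\gamma\geq u$ for every $\gamma$, and applying this to $\gamma^{-1}$ yields equality), so the least harmonic majorant $u_{\widetilde{f}}$ on $\DD$ is the pullback via $\tau$ of $u_f$. Evaluating at $0=\tau^{-1}(x_0)$ gives $\|\widetilde{f}\|_{H^2(\DD,\frk{X})}=\|f\|_{H^2(E)}$, so $\grF$ is an isometric isomorphism from $H^2(E)$ onto $H^2_\gra(\DD,\frk{X})$.

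Finally, the intertwining is automatic: for $g\in H^\infty(R)$ and $f\in H^2(E)$, the definition of the $H^\infty(R)$-action gives $\widetilde{gf}=(g\of\tau)\widetilde{f}$, and $g\of\tau$ lies in $H^\infty(\DD)^G$; the correspondence $g\mapsto g\of\tau$ is the standard isometric isomorphism $H^\infty(R)\to H^\infty(\DD)^G$ already recalled in the text. Since $\grF$ is unitary and intertwines the two module actions, the theorem follows. The step I expect to require the most care is the norm identity in paragraph two, since one must be sure that $h_f$ is globally well defined on $R$ (which uses unitarity of the transition maps), that the lift of $u_f$ is the \emph{least} harmonic majorant on $\DD$ (not merely \emph{a} majorant), and that $\widetilde{f}$ genuinely lies in $H^2(\DD,\frk{X})$ rather than only in some local $L^2$ sense near $\pd\DD\bksl L(G)$.
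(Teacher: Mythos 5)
Your proposal is correct and is essentially the same argument the paper appeals to: the paper's proof is a one-line reference to \cite[Thm.\ 5]{AnD}, and what you have written is the standard Abrahamse--Douglas construction (realize $E$ as $(\DD\times\frk{X})/G$, lift sections to $\gra$-equivariant functions, use unitarity of the transition maps and $G$-invariance of the least harmonic majorant to get the norm identity) filled out in detail. The only cosmetic point worth noting is that the paper's ``$H^2_\gra(E)$'' in the theorem statement is a typo for $H^2(E)$, which you have silently and correctly read it as.
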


Applying Theorem \ref{DRIsom} to Corollary \ref{BundTrivCor} produces the following.
\begin{corollary}\label{GetPhi}
  There is a bounded $\mathrm{GL} ( \mathfrak{X})$-valued analytic function $\Phi$ on $\mathbb{D}$ with the property that the $M_\Phi|H^2 ( \mathbb{D}, \mathfrak{X})^G$ is a continuous linear isomorphism onto $H_{\alpha}^2 ( \mathbb{D}, \mathfrak{X})$.
\end{corollary}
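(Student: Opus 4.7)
The plan is to construct $\Phi$ by pulling back the bounded analytic trivialization of Corollary \ref{BundTrivCor} through the covering map $\tau:\DD\to R$, and then read off the isomorphism statement via Theorem \ref{DRIsom}.

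First, I would use the standard concrete realization (implicit in the proof of Theorem \ref{DRIsom}) of the flat unitary bundle $E$ associated to $\gra$ as the quotient of $\DD\times\frk{X}$ under the $G$-action $(z,v)\mapsto(\grg(z),\gra(\grg)v)$. Under this realization, analytic sections of $E$ correspond to analytic maps $f:\DD\to\frk{X}$ satisfying $f\of\grg=\gra(\grg)f$ for every $\grg\in G$, and the pullback of harmonic measure from $\pd R$ to $\pd\DD$ identifies $H^2(E)$ with $H^2_\gra(\DD,\frk{X})$. In the same way, the trivial bundle $R\times\frk{X}$ corresponds to the trivial representation, so $H^2(R,\frk{X})$ is identified with $H^2(\DD,\frk{X})^G=H^2_e(\DD,\frk{X})$.

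Next, I would apply Corollary \ref{BundTrivCor} to obtain a bounded analytic vector bundle isomorphism $\grL:R\times\frk{X}\to E$. Lifting $\grL$ through $\tau$ in the realization above produces an analytic map $\Phi:\DD\to\mathrm{GL}(\frk{X})$ defined by $\Phi(z)=(\grL\of\tau)^z$; the fact that $\grL$ is a bundle map on $R$, not merely on $\DD$, forces the automorphy relation $\Phi(\grg(z))=\gra(\grg)\Phi(z)$ for every $\grg\in G$. Boundedness of $\Phi$ is inherited directly from that of $\grL$. To obtain a uniform bound on $\Phi(z)^{-1}$, I would invoke the open mapping theorem: Corollary \ref{BundTrivCor} asserts that $M_\grL:H^2(R,\frk{X})\to H^2(E)$ is a continuous linear isomorphism of Banach spaces, so $M_\grL^{-1}=M_{\grL^{-1}}$ is bounded, which forces $\grL^{-1}$ (and hence $\Phi^{-1}$) to be uniformly bounded as a bundle map.

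Finally, given $g\in H^2(\DD,\frk{X})^G$, the automorphy of $\Phi$ together with $G$-invariance of $g$ yields $(\Phi g)(\grg(z))=\gra(\grg)(\Phi g)(z)$, so $\Phi g\in H^2_\gra(\DD,\frk{X})$; the same argument applied to $\Phi^{-1}$ produces a two-sided inverse of $M_\Phi|H^2(\DD,\frk{X})^G$, establishing the isomorphism. Alternatively, once the identifications of the first paragraph are in hand, $M_\Phi$ restricted to $H^2(\DD,\frk{X})^G$ is literally $M_\grL$ in disguise, and the conclusion follows directly from Corollary \ref{BundTrivCor}. The main obstacle is purely bookkeeping: one must fix the concrete realization of $E$ so that the pullback of $\grL$ is actually a $\mathrm{GL}(\frk{X})$-valued function on $\DD$, and verify that the normalizations in Theorem \ref{DRIsom} match across the two models.
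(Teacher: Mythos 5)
Your proposal is correct and takes essentially the same approach as the paper, which simply applies Theorem \ref{DRIsom} to Corollary \ref{BundTrivCor}; you have just spelled out the bookkeeping (realizing $E$ as the $G$-quotient of $\DD\times\frk{X}$, lifting $\grL$ through $\tau$, and checking the automorphy relation and boundedness of $\Phi^{\pm 1}$) that the paper's one-line proof leaves implicit.
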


For each Hilbert space $\mathfrak{X}$ and each $\alpha \in \mathrm{Hom} ( G,\mathrm{U} ( \mathfrak{X}))$ we fix a function $\Phi_{\alpha}$ as given by the
preceding corollary. We remark that if $\Phi'_{\alpha}$ is another such function, then $h \mapsto ( \Phi_{\alpha})^{- 1} \Phi'_{\alpha} h$ is an automorphism of $H^2 ( \mathbb{D}, \mathfrak{X})^G$ and therefore $\Phi_{\alpha} H^2 ( \mathbb{D}, \mathfrak{X})^G = \Phi_{\alpha}' H^2 (\mathbb{D}, \mathfrak{X})^G$.

Let $\mc{A}$ be an algebra of operators acting on a Hilbert space $\frk{H}$, and suppose $\frk{H}'$ is an $\mc{A}$-invariant subspace of $\frk{H}$. The subspace $\frk{H}'$ is said to be \emph{pure} $\mc{A}$-invariant if there is no $\mc{A}|\frk{H}'$-reducing subspace of $\frk{H}'$ on which every element of $\mc{A}|\frk{H}'$ is a normal operator. For example, $\psi H^2(\DD)$ is pure $H^\infty(\DD)$-invariant whenever $\psi$ is an inner function. The following proposition differs from the analogous result in \cite{AnD} due to the fact that $H^\infty(R)$ is not, in general, generated by a single element.

\begin{proposition}\label{PurityProp}
  Let $\frk{M}$ be a $H^\infty(\DD)^G$-invariant subspace of $L^2(\pd\DD,\frk{X})^G$ and let $\frk{M}'$ be the smallest $H^\infty(\DD)$-invariant subspace containing $\frk{M}$.
  \begin{enumerate}
    \item[\rm{(i)}] $\frk{M}'$ is $G$-invariant.
    \item[\rm{(ii)}] $\frk{M}=\frk{M}'\cap L^2(\pd\DD,\frk{X})^G$
    \item[\rm{(iii)}] If $\frk{M}$ is pure $H^\infty(\DD)^G$-invariant, then $\frk{M}'$ is pure $H^\infty(\DD)$-invariant.
  \end{enumerate}
\end{proposition}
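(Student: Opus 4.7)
The plan is to treat the three parts in order, as the argument for (iii) relies on both (i) and (ii).

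For (i), the observation is direct: every $\grg\in G$ is an analytic automorphism of $\DD$, so composition with $\grg$ preserves $H^\infty(\DD)$, while it fixes $\frk{M}\sbse L^2(\pd\DD,\frk{X})^G$ pointwise. Thus for $f\in H^\infty(\DD)$ and $h\in\frk{M}$ we have $(fh)\of\grg=(f\of\grg)h\in H^\infty(\DD)\cdot\frk{M}$, so $H^\infty(\DD)\cdot\frk{M}$ is invariant under the bounded operator $V_\grg:u\mapsto u\of\grg$, and passing to the closure yields $V_\grg\frk{M}'\sbse\frk{M}'$.

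For (ii), the containment $\frk{M}\sbse\frk{M}'\cap L^2(\pd\DD,\frk{X})^G$ is immediate; the reverse inclusion is the principal technical step, and the one I expect to be the main obstacle. The approach is symmetrization via the orthogonal projection $P_G:L^2(\pd\DD,\frk{X})\to L^2(\pd\DD,\frk{X})^G$: for $h\in\frk{M}'\cap L^2(\pd\DD,\frk{X})^G$ approximated in $L^2$ by $u_n=\sum_j f_{n,j}h_{n,j}$ with $f_{n,j}\in H^\infty(\DD)$ and $h_{n,j}\in\frk{M}$, continuity of $P_G$ gives $P_Gu_n\to P_Gh=h$. Using the fundamental domain $D_0$ and the identity $\int_{\pd R}u\,d\grw=\int_{\pd\DD}u\of\grt\,dm$, one writes $P_G$ as a weighted Poincar\'e-type $G$-average, which puts $P_G(fg)$ in the form $(Tf)g$ for $g\in\frk{M}$ with $Tf$ a $G$-invariant function. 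The obstacle is then to show that this expression actually lies in $\frk{M}$, since $Tf$ is not on its face in $H^\infty(R)=H^\infty(\DD)^G$; resolving this requires a more delicate argument exploiting the $H^\infty(R)$-module structure of $\frk{M}$ together with a further approximation.

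For (iii), I argue by contradiction. Suppose $\frk{N}\sbse\frk{M}'$ is a non-zero subspace that is reducing for $H^\infty(\DD)|\frk{M}'$ and on which every operator in $H^\infty(\DD)|\frk{M}'$ acts normally. Then $M_z|\frk{N}$ is a normal isometry and hence unitary, so $\frk{N}$ is invariant under $M_{\bar z}$ and thereby under all of $L^\infty(\pd\DD)$; the standard structure theorem gives $\frk{N}=L^2(\pd\DD,\frk{Y})$ for a measurable field $\frk{Y}$ of subspaces of $\frk{X}$. I then form $\frk{N}_1$, the closed linear span of $\{V_\grg h:\grg\in G,h\in\frk{N}\}$, which by (i) sits inside $\frk{M}'$ and is described by the $G$-invariant measurable field $\frk{Y}_1(\grz)$ equal to the closed linear span of $\{\frk{Y}(\grg\grz):\grg\in G\}$; consequently $\frk{N}_1$ is $G$-invariant, is $L^\infty(\pd\DD)$-invariant, remains reducing, and every operator in $H^\infty(\DD)|\frk{M}'$ acts normally on it. A measurable selection produces a non-zero $G$-invariant section of $\frk{Y}_1$, giving a non-zero element of $\frk{N}_1\cap L^2(\pd\DD,\frk{X})^G$, which by (ii) lies in $\frk{M}$. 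Since multiplication by $L^\infty(\pd R)$ preserves both $L^2(\pd\DD,\frk{X})^G$ and its orthogonal complement, $\frk{N}_1\cap\frk{M}$ is reducing for $H^\infty(R)|\frk{M}$, and each element of $H^\infty(R)|\frk{M}$ acts normally on it as the restriction of a normal operator to a reducing subspace, contradicting the purity of $\frk{M}$.
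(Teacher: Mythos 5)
Your (i) is correct, and your (iii) is essentially the paper's own proof: take the $G$-orbit of the offending reducing subspace, recognize via Helson's theorem that it is given by a $G$-invariant projection-valued function, and intersect with $L^2(\pd\DD,\frk{X})^G$, appealing to (ii). One small fix in (iii): to produce a non-zero element of $\frk{N}_1\cap L^2(\pd\DD,\frk{X})^G$, don't invoke a ``measurable selection'' of $\frk{Y}_1$ (a measurable selection need not be $G$-invariant); instead observe that since the projection $P$ onto $\frk{N}_1$ satisfies $P\circ\grg=P$ for all $\grg\in G$, it maps $L^2(\pd\DD,\frk{X})^G$ into $\frk{N}_1\cap L^2(\pd\DD,\frk{X})^G$, and $Ph\neq 0$ for a suitable constant-valued $h$.

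The genuine problem is (ii): you explicitly leave the decisive step unresolved, and the obstacle you name is real. Writing $P_G(fg)=(Tf)g$ for $f\in H^\infty(\DD)$ and $g\in\frk{M}$, the $G$-invariant bounded function $Tf$ is a ratio of Poincar\'e-type sums with absolute-value weights (on $\pd\DD$ it has the form $\sum_\grg f(\grg\grz)\,|\grg'(\grz)|\big/\sum_\grg|\grg'(\grz)|$), which is not the boundary value of an analytic function; hence $Tf\notin H^\infty(\DD)^G$ and there is no reason $(Tf)g$ should lie in the $H^\infty(\DD)^G$-module $\frk{M}$. The symmetrization route is therefore a dead end as written, and (ii) needs a different idea --- the paper does not supply one itself but outsources (i) and (ii) to Abrahamse--Douglas, Prop.\ 3.4. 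Since both your proof of (iii) and the paper's rely on (ii), the unresolved gap there is a gap in the whole proposition.
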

\begin{proof}
  The proofs for (i)-(iii) are essentially identical to those found in {\cite[Prop 3.4]{AnD}}, but we present a proof of (iii) to illustrate the role of pure invariance.
  
  We denote by $\grz$ the coordinate function on $\pd\DD$. Suppose $\frk{M}'$ is not a pure $H^\infty(\DD)$-invariant subspace of $L^2(\pd\DD,\frk{X})$, meaning that there is a non-trivial $M_\grz$-invariant subspace $\frk{N}'\sbse \frk{M}'$ such that $M_\grz|\frk{N}'$ is a unitary operator. Given $f\in L^2(\pd\DD,\frk{X})$ and $\grg\in G$, we set $C_\grg f=f\of\grg$. The subspace $\frk{R}'=\bigvee_{\grg\in G}C_\grg\frk{N}'$ of $\frk{M}'$ is obvious $G$-invariant, and
  \[ M_\grz \frk{R}'=\bigvee_{\grg\in G} M_\grz C_\grg\frk{N}'=\bigvee_{\grg\in G} C_\grg M_{\grg^{-1}}\frk{N}'=\frk{R}'. \]
  Thus $M_\zeta|\frk{R}'$ is a unitary operator and so $\frk{R}'$ is $H^\infty(\DD)^G$-reducing.
  
  By \cite[Thm. VI.8]{Helson}, there is a unique weakly measurable projection-valued function $x\mapsto P(x)$ on $\pd \DD$ such that $\frk{R}'=P L^2(\pd\DD,\frk{X})$. Since $\frk{R}'$ is $G$-invariant, we have that $P=P\of\grg$ for each $\grg\in G$, and thus $\frk{R}'$ contains non-zero $G$-invariant elements. By (2) we have that $\frk{K}=\frk{R}'\cap L^2(\pd\DD,\frk{X})^G$ is a non-trivial subspace of $\frk{M}$. One now easily verifies that $\frk{K}$ is $H^\infty(\DD)^G$-reducing with the property that $M_f|\frk{K}$ is normal for any $f\in H^\infty(\DD)^G$.
\end{proof}

We now present the main theorem of this section. The proof of this is essentially contained in the proof of \cite[Thm. 11]{AnD}, but we sketch it here for the reader's convenience.

\begin{theorem}\label{AnDInvarSbspc}
  Suppose $\frk{M}$ a pure $H^\infty(\DD)^G$-invariant subspace of $L^2(\pd\DD,\frk{X})^G$. There exist a Hilbert space $\frk{Y}$, a representation $\gra:G\to\mathrm{U}(\frk{Y})$, and a $\mc{B}(\frk{Y},\frk{X})$-valued weakly measurable function $\Psi$ on $\pd\DD$ with the property that $\Psi(z)$ is an isometry for a.e. $z\in\pd\DD$, that $(\Psi\of\grg)\cdot \gra(\grg)=\Psi$ for each $\grg\in G$, and that
  \[ \frk{M}=\Psi\Phi_\gra H^2(\DD,\frk{Y})^G. \]
\end{theorem}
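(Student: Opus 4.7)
The plan is to pass from $\frk{M}$ to its $H^\infty(\DD)$-invariant hull $\frk{M}'$, apply the classical vector-valued Beurling-Lax-Halmos theorem there, and then extract a group cocycle from the $G$-invariance of $\frk{M}'$ to produce $\alpha$ and $\Psi$.

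First, I would invoke Proposition \ref{PurityProp} to form $\frk{M}'$, the smallest $H^\infty(\DD)$-invariant subspace of $L^2(\pd\DD,\frk{X})$ containing $\frk{M}$. By parts (i)-(iii) of that proposition, $\frk{M}'$ is $G$-invariant, pure $H^\infty(\DD)$-invariant, and satisfies $\frk{M} = \frk{M}' \cap L^2(\pd\DD,\frk{X})^G$. The Beurling-Lax-Halmos theorem then produces a Hilbert space $\frk{Y}$ and a $\mc{B}(\frk{Y},\frk{X})$-valued weakly measurable function $\Psi$ on $\pd\DD$ with $\Psi(z)$ isometric a.e.\ and $\frk{M}' = \Psi H^2(\DD,\frk{Y})$.

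Next, I would use the $G$-invariance of $\frk{M}'$ to construct $\alpha$. Since each $\grg\in G$ acts as a biholomorphism of $\DD$, precomposition with $\grg$ is an automorphism of $H^2(\DD,\frk{Y})$, and therefore $(\Psi\of\grg)H^2(\DD,\frk{Y})=C_\grg\frk{M}'=\frk{M}'=\Psi H^2(\DD,\frk{Y})$. By the essential uniqueness of the Beurling representation there is a constant $U_\grg\in\mathrm{U}(\frk{Y})$ with $\Psi\of\grg=\Psi U_\grg$. Setting $\alpha(\grg)=U_\grg^{-1}$ and using the identity $(\Psi U_{\grg_1})\of\grg_2=(\Psi\of\grg_2)U_{\grg_1}$ (valid because $U_{\grg_1}$ is a constant on $\frk{Y}$), a short calculation yields $U_{\grg_1\grg_2}=U_{\grg_2}U_{\grg_1}$. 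Thus $\alpha$ is a genuine homomorphism, and by construction $(\Psi\of\grg)\,\alpha(\grg)=\Psi$ for every $\grg\in G$.

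Finally I would verify both inclusions of $\frk{M}=\Psi\Phi_\alpha H^2(\DD,\frk{Y})^G$. For $\supseteq$: if $f\in H^2(\DD,\frk{Y})^G$, then Corollary \ref{GetPhi} gives $\Phi_\alpha f\in H_\alpha^2(\DD,\frk{Y})$, so $(\Phi_\alpha f)\of\grg=\alpha(\grg)\Phi_\alpha f$; combining this with $\Psi\of\grg=\Psi\alpha(\grg)^{-1}$ shows $\Psi\Phi_\alpha f$ is $G$-invariant and lies in $\Psi H^2(\DD,\frk{Y})=\frk{M}'$, hence in $\frk{M}$ by Proposition \ref{PurityProp}(ii). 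For $\subseteq$: given $g\in\frk{M}\sbse\frk{M}'$, write $g=\Psi h$ with $h\in H^2(\DD,\frk{Y})$ (unique by the a.e.\ isometricity of $\Psi$); $G$-invariance of $g$ combined with $\Psi\of\grg=\Psi\alpha(\grg)^{-1}$ forces $h\of\grg=\alpha(\grg)h$, so $h\in H_\alpha^2(\DD,\frk{Y})$, and Corollary \ref{GetPhi} writes $h=\Phi_\alpha h'$ with $h'\in H^2(\DD,\frk{Y})^G$, giving $g=\Psi\Phi_\alpha h'$.

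The main point requiring care is the multiplicative bookkeeping that identifies $\alpha$ as a genuine homomorphism rather than a projective one, and the compatibility between the cocycle relation $(\Psi\of\grg)\alpha(\grg)=\Psi$ and the equivariance $(\Phi_\alpha f)\of\grg=\alpha(\grg)\Phi_\alpha f$ built into $\Phi_\alpha$. Once this bookkeeping is set up correctly, the two inclusions are direct consequences of the Beurling representation, Proposition \ref{PurityProp}, and Corollary \ref{GetPhi}.
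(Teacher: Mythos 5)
Your proof is correct and follows essentially the same route as the paper: pass to the $H^\infty(\DD)$-invariant hull $\frk{M}'$ via Proposition \ref{PurityProp}, invoke the vector-valued Beurling--Lax--Halmos/Helson theorem and its uniqueness corollary to obtain $\Psi$ and the unitaries $\alpha(\gamma)$, and then identify $\frk{M}=\Psi H^2_\alpha(\DD,\frk{Y})$ and apply Corollary \ref{GetPhi}. The only cosmetic difference is that you spell out the anti-homomorphism bookkeeping ($U_{\gamma_1\gamma_2}=U_{\gamma_2}U_{\gamma_1}$, hence $\alpha=U^{-1}$ is a homomorphism) and the two inclusions explicitly, whereas the paper compresses these into the formula $\alpha(\gamma)=(\Psi\circ\gamma)^{*}\Psi$ and a one-line appeal to Proposition \ref{PurityProp}(ii).
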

\begin{proof}
  Let $\frk{M}'$ be the smallest $H^\infty(\DD)$-invariant subspace containing $\frk{M}$. As $\frk{M}$ is pure $H^\infty(\DD)^G$-invariant, it follows from Proposition \ref{PurityProp}(iii) that $\frk{M}'$ is pure $H^\infty(\DD)$-invariant. By \cite[Thm. VI.9]{Helson} there is a Hilbert space $\frk{Y}$ and a weakly measurable $\mc{B}(\frk{X},\frk{Y})$-valued function $\Psi$ such that $\frk{M}'=\Psi H^2(\DD,\frk{Y})$ and $\Psi(z)$ is an isometry for a.e. $z\in\pd\DD$. Given $\grg\in G$ it follows from Proposition \ref{PurityProp}(i) that $(\Psi\of\grg)H^2(\DD,\frk{Y})=\Psi H^2(\DD,\frk{Y})$. A corollary of \cite[Thm. VI.9]{Helson} asserts that there is a $\gra(\grg)\in \mathrm{U}(\frk{Y})$ such that $(\Psi\of\grg)\gra(\grg)=\Psi$. Thus $\gra(\grg)=(\Psi\of\grg)^*\Psi$, from which we readily deduce that $\gra$ is a unitary representation of $G$ on $\frk{Y}$. Proposition \ref{PurityProp}(ii) implies that $\frk{M}=\Psi H^2_\gra(\DD,\frk{Y})$, and the theorem now follows from Corollary \ref{GetPhi}.
\end{proof}

\section{Virtual Similarity}

Let $\mathbb{V}$ and $\mathbb{W}$ denote $n$-tuples of commuting isometries on Hilbert spaces $\mathfrak{H}$ and $\mathfrak{K}$, respectively. Recall that $\mathbb{V} \lesssim \mathbb{W}$ if there is a finite codimensional $\mathbb{W}$-invariant subspace $\mathfrak{K}' \subseteq \mathfrak{K}$ such that $\mathbb{W}|\mathfrak{K}'$ is similar to $\mathbb{V}$. If $\WW\lesssim\VV$ as well, then we say $\VV$ is \emph{virtually similar} and write $\VV\approx\WW$. The following is easily deduced from this definition.

\begin{corollary}\label{VS:ApproxEqAnn}
  If $\mathbb{V}$ and $\mathbb{W}$ are virtually similar, then
  $\mathrm{Ann} ( \mathbb{V}) = \mathrm{Ann} ( \mathbb{W})$.
\end{corollary}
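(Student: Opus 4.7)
The plan is to establish the two inclusions $\Ann(\VV)\sbse\Ann(\WW)$ and $\Ann(\WW)\sbse\Ann(\VV)$ separately. By the symmetry of virtual similarity, it actually suffices to prove the stronger statement: $\VV\lesssim\WW$ alone implies $\Ann(\VV)=\Ann(\WW)$. So fix a finite codimensional $\WW$-invariant subspace $\frk{K}'\sbse\frk{K}$ and an invertible intertwiner $S:\frk{H}\to\frk{K}'$ with $SV_i=(W_i|\frk{K}')S$ for each $i$.

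The easy inclusion is $\Ann(\WW)\sbse\Ann(\VV)$. Given $p\in\Ann(\WW)$, we have $p(\WW)=0$, so in particular $p(\WW|\frk{K}')=0$ because $\frk{K}'$ is $\WW$-invariant. The intertwining relation gives $Sp(\VV)=p(\WW|\frk{K}')S=0$, and since $S$ is injective we conclude $p(\VV)=0$.

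For the reverse inclusion, take $p\in\Ann(\VV)$ and set $T=p(\WW)$. The intertwining relation now forces $p(\WW|\frk{K}')S=Sp(\VV)=0$, and surjectivity of $S$ onto $\frk{K}'$ yields $p(\WW|\frk{K}')=0$, i.e.\ $\frk{K}'\sbse\ker T$. Thus $\ker T$ has finite codimension in $\frk{K}$, so its image $\ran T$ is finite dimensional. Since $T$ is a polynomial in the $W_i$, it commutes with each $W_i$, and therefore $\ran T$ is a $\WW$-invariant subspace of $\frk{K}$.

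It remains to show that the only finite dimensional $\WW$-invariant subspace $\frk{M}\sbse\frk{K}$ is $\{0\}$; this is the only nontrivial step, and it relies essentially on the pureness of the shifts. Since $W_1$ is an isometry and $\frk{M}$ is finite dimensional, the restriction $W_1|\frk{M}$ is an isometry of a finite dimensional space into itself, hence surjective onto $\frk{M}$. Iterating, $\frk{M}=W_1^j\frk{M}\sbse W_1^j\frk{K}$ for every $j\geq 0$, so $\frk{M}\sbse \bigcap_{j\geq 0}W_1^j\frk{K}=\{0\}$ by the von Neumann-Wold characterization of shifts. Applying this to $\frk{M}=\ran T$ gives $T=p(\WW)=0$, so $p\in\Ann(\WW)$, completing the proof.
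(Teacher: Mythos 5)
Your proof is correct, but it works harder than necessary and brings in a hypothesis the corollary does not require. Since virtual similarity gives \emph{both} $\VV\lesssim\WW$ and $\WW\lesssim\VV$, both inclusions already follow from your ``easy inclusion'' argument applied symmetrically: $\VV\lesssim\WW$ gives $\Ann(\WW)\sbse\Ann(\VV)$, and $\WW\lesssim\VV$ gives $\Ann(\VV)\sbse\Ann(\WW)$. This two-line argument is valid for arbitrary $n$-tuples of commuting isometries --- the generality in which the corollary is stated at the start of Section 4 --- and is surely what the author meant by ``easily deduced.'' Your harder direction, that $\VV\lesssim\WW$ \emph{alone} forces $\Ann(\VV)\sbse\Ann(\WW)$ because $\ran p(\WW)$ is a finite-dimensional $\WW$-invariant subspace which must vanish, is a correct and genuinely stronger fact, in the same spirit as Lemma~\ref{LessSUFEq}; but it invokes $\bigcap_{j\geq 0}W_1^j\frk{K}=\{0\}$, a pure-shift condition on $W_1$ that fails for general commuting isometries (a unitary $W_1$ being the obvious counterexample). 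So the step you identify as ``the only nontrivial step'' is precisely the step you could have avoided by using the symmetry of $\approx$ instead of proving the stronger one-sided implication.
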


\begin{lemma}
  \label{BlashProp}Let $V$ be an isometry on a Hilbert space $\mathfrak{H}$. If $\mathfrak{H}'$ is a finite codimensional $V$-invariant subspace and $V$ has no eigenvalues, then there is a finite Blaschke product $B$ such that $B (V)\mathfrak{H} \sbse \mathfrak{H}' \sbse \mathfrak{H}$. Moreover, if $V$ is a shift of finite multiplicity, then $B(V)\frk{H}$ has finite codimension in $\frk{H}'$.
\end{lemma}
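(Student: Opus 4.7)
The plan is to analyze the finite-dimensional quotient operator $\tilde V$ that $V$ induces on $\frk{H}/\frk{H}'$, extract from it a polynomial that annihilates $\tilde V$, and then upgrade that polynomial to a finite Blaschke product. Turning a polynomial into a Blaschke product requires its zeros to lie in $\DD$, and this is where the hypothesis that $V$ has no eigenvalues will be used.

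First, I would let $m(x)=\prod_{i}(x-\lambda_i)$ be the characteristic polynomial of $\tilde V$. Cayley--Hamilton gives $m(\tilde V)=0$, which is the same as $m(V)\frk{H}\sbse\frk{H}'$. Because the spectrum of $\tilde V$ is contained in $\grs(V)\sbse\cc{\DD}$, each root satisfies $|\lambda_i|\leq 1$.

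The key step is to improve this to $|\lambda_i|<1$ for every $i$. Suppose otherwise and let $\lambda$ be an eigenvalue of $\tilde V$ on $\pd\DD$, with eigenvector $\tilde g\neq 0$; lift $\tilde g$ to some $g\in\frk{H}$. I would then apply von Neumann's mean ergodic theorem to the isometry $\lambda^{-1}V$: the Ces\`aro averages $A_n=\frac{1}{n}\sum_{k=0}^{n-1}\lambda^{-k}V^k$ converge strongly to the orthogonal projection onto the fixed-point set $\ker(V-\lambda)$, which is $\{0\}$ by hypothesis, so $A_n g\to 0$ in $\frk{H}$. On the other hand, the image of $A_n g$ under the quotient map equals $\tilde g$ for every $n$, since $\lambda^{-k}\tilde V^k\tilde g=\tilde g$. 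Continuity of the quotient map then forces $\tilde g=0$, a contradiction. This is the step I expect to be the main obstacle, as it is where one must leverage the absence of eigenvalues of $V$ to control the spectrum of an operator on a space where $V$ itself is no longer acting.

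Once each $\lambda_i$ lies in $\DD$, I would set $B(x)=\prod_i \frac{x-\lambda_i}{1-\cc{\lambda_i}x}$. Each factor $I-\cc{\lambda_i}V$ is invertible because $\|\cc{\lambda_i}V\|<1$, so the ranges satisfy $B(V)\frk{H}=m(V)\frk{H}\sbse\frk{H}'$, proving the first assertion. For the final statement, when $V$ is a shift of multiplicity $k$, up to unitary equivalence $V\cong S\otimes I_{\CC^k}$ on $H^2(\DD)\otimes\CC^k$, so $B(V)\frk{H}$ is identified with $BH^2(\DD)\otimes\CC^k$, which has codimension $k\deg B<\infty$ in $\frk{H}$. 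Since $\frk{H}'$ already has finite codimension in $\frk{H}$, $B(V)\frk{H}$ has finite codimension in $\frk{H}'$ as well.
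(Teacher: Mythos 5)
Your proof is correct, and it takes a genuinely different route at the key step. Both you and the paper pass to the finite-rank operator $\tilde V$ that $V$ induces on $(\frk{H}')^\bot\cong\frk{H}/\frk{H}'$ and extract a polynomial annihilating it; but where you invoke von Neumann's mean ergodic theorem for $\lambda^{-1}V$ to prove that $\tilde V$ has no eigenvalues of modulus one, so that the entire characteristic polynomial converts into a finite Blaschke product, the paper permits unimodular roots $\lambda$ in the minimal polynomial of $\tilde V$, uses the hypothesis in the form ``$\ker(V^*-\cc{\lambda})=\{0\}$, hence $(V-\lambda I)\frk{H}$ is dense,'' and forms $B$ from the interior roots only, the inclusion $B(V)\frk{H}\sbse\frk{H}'$ then following because the discarded boundary factors have dense range and $\frk{H}'$ is closed. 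Your variant yields the slightly stronger conclusion that $\sigma(\tilde V)\sbs\DD$, while the paper's argument is a bit more economical. One minor slip worth flagging: $\sigma(\tilde V)\sbse\sigma(V)$ is not automatic for a compression of this kind, but the inclusion you actually use, $\sigma(\tilde V)\sbse\cc{\DD}$, follows at once from $\|\tilde V\|\le\|V\|=1$. The final, finite-codimension assertion is handled essentially the same way in both proofs; the paper simply phrases it as ``$B(V)$ is again a shift of finite multiplicity'' rather than computing the codimension explicitly in the model.
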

\begin{proof}
 The compression of $V$ to $(\frk{H}')^\bot$ has a minimal polynomial $Q$. Denote by $\grl_1,\dots,\grl_m$ the roots of $Q$, and assume only the first $\ell$ are contained in $\DD$. For $j>\ell$, we note that $(V-\grl_jI)\frk{H}$ is dense in $\frk{H}$. Thus we set $B(z)=\prod_{j=1}^\ell \frac{z-\grl_j}{1-\cc{\grl_j}z}$. 
 
 If $V$ is a shift of finite multiplicity, then the same is true of $\frac{V-\grl_iI}{I-\cc{\grl_i}V}$ for $i=1,\dots,\ell$. Thus $B(V)=\prod_{j=1}^\ell\frac{V-\grl_jI}{I-\cc{\grl_j}V}$ is also a shift of finite multiplicity.
\end{proof}

\begin{corollary}
  Let $\VV$ be an $n$-tuple of commuting shifts of finite multiplicity on a Hilbert space $\frk{H}$. If $\mathfrak{H}'$ is a $\mathbb{V}$-invariant subspace of finite codimension, then  $\mathbb{V}|\mathfrak{H}' \approx \mathbb{V}$.
\end{corollary}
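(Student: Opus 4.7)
The plan is to establish the two inequalities $\VV|\frk{H}' \lesssim \VV$ and $\VV \lesssim \VV|\frk{H}'$ separately. The first is almost tautological: taking $\frk{K}' = \frk{H}'$ (viewed as a $\VV$-invariant subspace of finite codimension in $\frk{H}$), the restriction $\VV|\frk{H}'$ is trivially similar to itself, yielding $\VV|\frk{H}' \lesssim \VV$.

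The substance is in the reverse direction, and here Lemma \ref{BlashProp} does essentially all the work. The idea is to apply it to $V_1$. First observe that $V_1$, being a shift, has no eigenvalues: any eigenvector $h$ with $V_1 h = \lambda h$ would satisfy $|\lambda| = 1$ and $h = \lambda^{-j} V_1^j h \in V_1^j \frk{H}$ for every $j \geq 0$, contradicting $\bigcap_j V_1^j \frk{H} = \{0\}$. Applying Lemma \ref{BlashProp} to the isometry $V_1$ and the invariant subspace $\frk{H}'$ produces a finite Blaschke product $B$ with $B(V_1)\frk{H} \sbse \frk{H}'$ and $\dim \frk{H}'/B(V_1)\frk{H} < \infty$.

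Next I would verify that $B(V_1)$ furnishes a unitary equivalence between $\VV$ on $\frk{H}$ and $\VV$ restricted to $B(V_1)\frk{H}$. Since $V_1,\ldots,V_n$ commute, $B(V_1)$ commutes with each $V_i$, so $B(V_1)\frk{H}$ is $\VV$-invariant (hence $\VV|\frk{H}'$-invariant), and $B(V_1)$ intertwines $\VV$ with $\VV|B(V_1)\frk{H}$. Moreover, as noted in the proof of Lemma \ref{BlashProp}, each Möbius factor $(V_1 - \grl_j I)(I - \cc{\grl_j} V_1)^{-1}$ is itself a shift, in particular an isometry, so the product $B(V_1)$ is an isometry. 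Therefore $B(V_1):\frk{H}\to B(V_1)\frk{H}$ is a unitary isomorphism intertwining the two $n$-tuples.

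Putting the two steps together: $B(V_1)\frk{H}$ is a $\VV|\frk{H}'$-invariant subspace of finite codimension in $\frk{H}'$ on which $\VV$ is similar (in fact unitarily equivalent) to $\VV$ on the full space $\frk{H}$, which gives $\VV \lesssim \VV|\frk{H}'$ and completes the proof. There is no real obstacle here beyond checking that $V_1$ is eigenvalue-free so that Lemma \ref{BlashProp} applies and that $B(V_1)$ is genuinely an isometry rather than merely injective — both points handled by the shift hypothesis.
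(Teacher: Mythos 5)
Your argument is correct and follows the route the paper clearly intends (the corollary is stated without proof immediately after Lemma~\ref{BlashProp}, which is exactly the tool you invoke). The easy direction $\VV|\frk{H}'\lesssim\VV$ is handled correctly by taking $\frk{K}'=\frk{H}'$ in the definition, and the substantive direction is handled by applying Lemma~\ref{BlashProp} to a single coordinate $V_1$ to produce $B(V_1)\frk{H}\subseteq\frk{H}'$ of finite codimension in $\frk{H}'$, noting that $B(V_1)$ commutes with $\VV$ and is an isometry (so it implements a unitary equivalence, which is more than the similarity that the definition requires). The check that a shift has no eigenvalues, needed to invoke the lemma, is also correctly supplied.
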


\begin{lemma}\label{LessSUFEq}
 Suppose that $\VV$ and $\WW$ are $n$-tuples of commuting isometries. If $\VV\lesssim \WW$ and $W_n$ has no eigenvalues, then each element of $\VV$ is a shift of finite multiplicity if and only if each element of $\WW$ is a shift of finite multiplicity. In either case $\VV\approx\WW$.
\end{lemma}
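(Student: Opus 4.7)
The plan is to use Lemma \ref{BlashProp} with $V=W_n$ to extract a finite Blaschke product $B$ satisfying $B(W_n)\frk{K}\sbse \frk{K}'$, and then to exploit $B(W_n)$ itself as an isometric intertwiner between $\WW$ acting on $\frk{K}$ and $\WW$ acting on the invariant subspace $B(W_n)\frk{K}$. Once this is set up, the biconditional and the virtual similarity will both fall out by transporting operator-theoretic properties across $X$ and $B(W_n)$.

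First I would verify that shift-of-finite-multiplicity is preserved under the similarity $X\colon \frk{K}'\to \frk{H}$ with $V_iX=X(W_i|\frk{K}')$. The identity $V_i^j\frk{H}=XW_i^j\frk{K}'$ gives $\bigcap_j V_i^j\frk{H}=X(\bigcap_j W_i^j\frk{K}')$, and $\dim \frk{H}/V_i\frk{H}=\dim \frk{K}'/W_i\frk{K}'$ because $X$ is bijective. So each $V_i$ is a shift of finite multiplicity iff each $W_i|\frk{K}'$ is. Passing between $W_i|\frk{K}'$ and $W_i$ is routine in one direction: if $W_i$ is a shift of multiplicity $k$, then codimension chasing shows $W_i|\frk{K}'$ is also a shift of multiplicity $k$.

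For the reverse direction, assume each $W_i|\frk{K}'$ is a shift of finite multiplicity, and invoke Lemma \ref{BlashProp} with $V=W_n$ (using the no-eigenvalue hypothesis) to obtain $B$ with $B(W_n)\frk{K}\sbse \frk{K}'$. Each factor $(W_n-\grl I)(I-\cc{\grl}W_n)^{-1}$ is an isometry by a direct calculation using $|\grl|<1$, and products of commuting isometries are isometries, so $B(W_n)$ is an isometry. Since $W_i$ commutes with $W_n$, it commutes with $B(W_n)$; hence $B(W_n)\colon \frk{K}\to B(W_n)\frk{K}$ is a unitary isomorphism of Hilbert spaces intertwining $W_i$ on $\frk{K}$ with $W_i|B(W_n)\frk{K}$. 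The latter is an invariant subspace of $W_i|\frk{K}'$, hence by Beurling--Lax it is a shift of multiplicity bounded by that of $W_i|\frk{K}'$, in particular a shift of finite multiplicity. Therefore each $W_i$ is a shift of finite multiplicity, completing the biconditional.

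Finally, to obtain $\WW\lesssim \VV$, the \emph{moreover} clause of Lemma \ref{BlashProp}, now applicable since $W_n$ is known to be a shift of finite multiplicity, guarantees that $B(W_n)\frk{K}$ has finite codimension in $\frk{K}'$. Set $\frk{H}'=X(B(W_n)\frk{K})$; this is $\VV$-invariant (since $V_iXx=XW_ix\in X(B(W_n)\frk{K})$ whenever $x\in B(W_n)\frk{K}$) and has finite codimension in $\frk{H}=X\frk{K}'$ because $X$ is a bijection. The composition $Y=X\of B(W_n)\colon \frk{K}\to\frk{H}'$ is a bounded linear bijection with $V_iY=XW_iB(W_n)=XB(W_n)W_i=YW_i$, exhibiting $\VV|\frk{H}'$ as similar to $\WW$, so $\WW\lesssim \VV$. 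The main technical point to get right is that each factor in $B(W_n)$ is itself an isometry, which is what makes $B(W_n)\frk{K}$ closed and allows the isometric intertwining; everything else is a matter of unwinding $X$ through the finite-codimensional subspace $B(W_n)\frk{K}$.
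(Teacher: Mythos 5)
Your proof is correct, and it runs largely along the paper's own lines: $B(W_n)$ from Lemma~\ref{BlashProp} is the isometric, $\WW$-commuting operator carrying $\frk{K}$ into $\frk{K}'$, and the composite $X\of B(W_n)$ witnesses $\WW\lesssim\VV$. The one genuine divergence is in the step where you show that $W_i$ on $\frk{K}$ has finite multiplicity once $W_i|\frk{K}'$ does. You exploit the observation that $B(W_n)\colon\frk{K}\to B(W_n)\frk{K}$ is a unitary intertwining $W_i$ with $W_i|(B(W_n)\frk{K})$, and then invoke the Beurling--Lax--Halmos theorem to bound the multiplicity of the restriction of $W_i|\frk{K}'$ to the nonzero invariant subspace $B(W_n)\frk{K}$. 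The paper avoids the structure theorem entirely and instead chases codimensions through the chain $W_iB(W_n)\frk{K}'\sbse W_iB(W_n)\frk{K}\sbse B(W_n)\frk{K}\sbse\frk{K}'$: since $W_i|\frk{K}'$ and $W_n|\frk{K}'$ are shifts of finite multiplicity, the left end has finite codimension in the right end, so $W_iB(W_n)\frk{K}$ has finite codimension in $B(W_n)\frk{K}$, and pulling that back along the isometry $B(W_n)$ yields the finite defect of $W_i$ on $\frk{K}$. Both routes are sound; yours is conceptually tidy (everything reduces to invariance of ``shift of finite multiplicity'' under similarity, under passage to finite-codimensional invariant subspaces, and under Beurling--Lax restriction) at the cost of a heavier tool, while the paper stays entirely within elementary codimension counting.
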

\begin{proof}
   Let $\frk{K}'$ be a $\WW$-invariant subspace of finite codimension in $\frk{K}$, and $S:\frk{H}\to \frk{K}'$ a boundedly invertible operator such that $SV_i=W_iS$ for $i=1,\dots,n$. By Lemma \ref{BlashProp}, there is a Blaschke product $B$ such that $B(W_n)\frk{K}\sbse \frk{K}'$, whence
   \[ B(W_n)\bigcap_{j=1}^\infty W_i^j\frk{K}\sbse S\bigcap_{j=1}^\infty V_i^j\frk{H} \sbse \bigcap_{j=1}^\infty W_i^j\frk{K}. \]
   Thus $V_i$ is a shift if and only if $W_i$ is a shift.
   
   Plainly
   \begin{equation}\label{IncluChain1}
      W_iB(W_n)\frk{K}\sbse W_i\frk{K}'\sbse \frk{K}'\sbse \frk{K}.
   \end{equation}
   If $W_i$ and $W_n$ are shifts of finite multiplicity, then $W_iB(W_n)\frk{K}$ has finite codimension in $\frk{K}$, and it follows from \eqref{IncluChain1} that $SV_i\frk{H}$ has finite codimension in $S\frk{H}$. That is, if each element of $\WW$ is a shift of finite multiplicity, then each element of $\VV$ is a shift of finite multiplicity. We also note that
   \begin{equation}\label{IncluChain2}
    W_iB(W_n)\frk{K}'\sbse W_iB(W_n)\frk{K}\sbse B(W_n)\frk{K} \sbse \frk{K}'. 
   \end{equation}
   If $V_i$ and $V_n$ are shifts of finite multiplicity, then $W_iB(W_n)\frk{K}'$ has finite multiplicity in $\frk{K}'$. It then follows from \eqref{IncluChain2} that $W_iB(W_n)\frk{K}$ has finite codimension in $B(W_n)\frk{K}$. Because $f\mapsto B(W_n)f$ is isometric on $\frk{K}$, it follows that each element of $\WW$ is a shift of finite multiplicity if and only if the same holds for each element of $\VV$.
   
   Assume that $\WW$ is an $n$-tuple of shifts of finite multiplicity, and note that $B(W_n)\frk{K}$ has finite codimension in $\frk{K'}$. Thus the $\VV$-invariant subspace $\frk{H}'=S^{-1}B(W_n)\frk{K}$ has finite codimension in $\frk{H}$. For $f\in\frk{K}$,
   \[ B ( W_n) W_j f = W_j S S^{- 1} B ( W_n) f = S V_j S^{- 1} B ( W_n) f, \]
   and so $S^{-1}B(W_n)$ intertwines $\WW$ and $\VV$. Because $g\mapsto S^{-1}B(W_n)g$ is one-to-one from $\frk{K}$ onto $\frk{H}'$, we conclude that $\WW\lesssim\VV$ as well.
\end{proof}

Recall that the virtual cyclicity $\kappa ( \mathbb{V})$ of $\mathbb{V}$ is
the smallest positive integer $k$ for which there exists a set of vectors $h_1,
\ldots, h_k \in \mathfrak{H}$ such that $\bigvee_{j = 1}^k \{p(\VV)h_j:p\in\CC[x_1,\dots,x_n]\}$ has finite codimension. We note that
if $\mathfrak{H}'$ is a finite codimensional $\mathbb{V}$-invariant subspace
of $\mathfrak{H}$, then $\kappa ( \mathbb{V}) = \kappa (
\mathbb{V}|\mathfrak{H}')$. Thus $\mathbb{V}$ is always virtually similar to
an $n$-tuple that is both $\kappa ( \mathbb{V})$-cyclic and virtually $\kappa
( \mathbb{V})$-cyclic. We remark that a virtually cyclic $n$-tuple need not be cyclic.

\begin{example}
  Let $V_1$ and $V_2$ be the isometries on $H^2(\DD,\CC^2)$ given by the equations $V_1(f,g)(z)=(zf(z),zg(z))$ and $V_2(f,g)(z)=(zg(z),zf(z))$. The $(V_1,V_2)$-invariant subspace $\frk{M}$ generated by any $h\in H^2(\DD,\CC^2)$ has codimension at least 1. Indeed, if $h(0)=(a,b)\in\CC^2$ is non-zero, then $(\cc{b},-\cc{a})$ is orthogonal to $\frk{M}$. In the case that $h(0)=0$, then $\frk{M}\sbse \grz H^2(\DD,\CC^2)$, where $\grz$ is the coordinate function on the disc. Thus $(V_1,V_2)$ is not cyclic. 
\end{example}

\begin{lemma}\label{KappaLem}
  Let $\VV$ and $\WW$ be $n$-tuples of commuting shifts of finite multiplicity. If $\VV\lesssim\WW$, then $\grk(\VV)=\grk(\WW)$.
\end{lemma}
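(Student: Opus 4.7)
The plan is to prove the two inequalities $\kappa(\WW)\le\kappa(\VV)$ and $\kappa(\VV)\le\kappa(\WW)$ separately, using the intertwining operator $S:\frk{H}\to\frk{K}'$ from the definition of $\VV\lesssim\WW$ and, for the more delicate direction, the Blaschke-product construction of Lemma \ref{BlashProp}.

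For $\kappa(\WW)\le\kappa(\VV)$, I would take $h_1,\dots,h_k\in\frk{H}$ of smallest cardinality whose $\VV$-polynomial span $\frk{H}_0$ has finite codimension in $\frk{H}$, so that $k=\kappa(\VV)$. Because $S$ intertwines $\VV$ with $\WW|\frk{K}'$, the image $S\frk{H}_0=\bigvee_{i}\{p(\WW)Sh_i:p\in\CC[x_1,\dots,x_n]\}$ is the $\WW$-polynomial span of $Sh_1,\dots,Sh_k$; since $S$ is a linear homeomorphism onto $\frk{K}'$, this span has finite codimension in $\frk{K}'$, and since $\frk{K}'$ has finite codimension in $\frk{K}$, the vectors $Sh_1,\dots,Sh_k$ witness $\kappa(\WW)\le k=\kappa(\VV)$.

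For the reverse direction, take $k_1,\dots,k_r\in\frk{K}$ of smallest cardinality whose $\WW$-polynomial span $\frk{K}_0$ has finite codimension in $\frk{K}$. These vectors need not lie in $\frk{K}'$, so one cannot directly apply $S^{-1}$; this is the main obstacle. To circumvent it I would invoke Lemma \ref{BlashProp} to obtain a finite Blaschke product $B$ with $B(W_n)\frk{K}\sbse \frk{K}'$ and $B(W_n)\frk{K}$ of finite codimension in $\frk{K}'$. Each Möbius factor $(W_n-\grl_j)(I-\cc{\grl_j}W_n)^{-1}$ is an isometry on $\frk{K}$ by a short computation already used in the proof of Lemma \ref{BlashProp}, so $B(W_n)$ itself is an isometry; expanding $(I-\cc{\grl_j}W_n)^{-1}$ as a norm-convergent Neumann series exhibits $B(W_n)$ as a norm limit of polynomials in $W_n$, whence $B(W_n)$ commutes with every $W_j$. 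Setting $f_i:=S^{-1}B(W_n)k_i\in\frk{H}$, the intertwining $S^{-1}p(\WW)=p(\VV)S^{-1}$ on $\frk{K}'$ combined with $B(W_n)p(\WW)=p(\WW)B(W_n)$ gives
\[ \bigvee_{i=1}^r\{p(\VV)f_i:p\in\CC[x_1,\dots,x_n]\}=S^{-1}B(W_n)\frk{K}_0. \]
Since $B(W_n)$ is an isometry, $B(W_n)\frk{K}_0$ has finite codimension in $B(W_n)\frk{K}$, which has finite codimension in $\frk{K}'$; the linear homeomorphism $S^{-1}:\frk{K}'\to\frk{H}$ then transports this finite-codimension property back to $\frk{H}$. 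Thus $\kappa(\VV)\le r=\kappa(\WW)$, and combining both inequalities gives the equality.

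The main obstacle, as indicated, is the second direction: a generic choice of $\WW$-generators of a finite-codimensional subspace of $\frk{K}$ need not sit inside $\frk{K}'$, so one must first produce an isometric operator with range contained in $\frk{K}'$ that commutes with the full tuple $\WW$. Lemma \ref{BlashProp} supplies such an operator by exploiting the finite-multiplicity shift structure of $W_n$ through its spectral (Blaschke) calculus; without this step the intertwining via $S^{-1}$ is unavailable.
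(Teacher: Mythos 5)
Your proof is correct and follows essentially the same route as the paper's: the paper first invokes Lemma \ref{LessSUFEq} to get $\VV\approx\WW$ and then uses the preceding remark that $\kappa(\WW)=\kappa(\WW|\frk{K}')$ (which itself rests on exactly the Blaschke-product argument you spell out) to transport generators through $S^{-1}$, closing the loop by symmetry. You simply inline the content of that remark and handle both inequalities from the single intertwining $S$, which is a minor reorganization rather than a different method.
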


\begin{proof}
  By Lemma \ref{LessSUFEq}, we have $\VV\approx \WW$. Suppose $\VV$ and $\WW$ act on Hilbert spaces $\frk{H}$ and $\frk{K}$, respectively. There is a finite codimensional $\WW$-invariant subspace $\frk{K}'\sbse \frk{K}$ such that $\WW|\frk{K}'$ is similar to $\VV$; say $S\in\mc{B}(\frk{H},\frk{K}')$ is boundedly invertible and $SV_i=W_iS$ for $i=1,\dots,n$. Let $k=\grk(\WW)=\grk(\WW|\frk{K}')$ and fix $f_1,\dots,f_k\in\frk{K}'$ that determine a $\WW$-invariant subspace $\frk{K}''$ of finite codimension in $\frk{K}'$. The subspace $S\frk{K}''=\bigvee_{i=1}^k\bigvee_{\grb\in\NN_0^n}\VV^\grb Sf_i$ has finite codimension in $\frk{H}$, and therefore $\grk(\VV)\leq k$. Because $\VV\approx \WW$, a similar argument proves that $\grk(\WW)\leq\grk(\VV)$ as well.
\end{proof}

\subsection{The Special Case where $\mathrm{Ann} ( \mathbb{V})$ is Prime.}\label{Sec:SC}

Throughout this subsection we assume that $\mathrm{Ann}(\mathbb{V})$ is a
prime ideal and set $\mathcal{V}= Z ( \mathrm{Ann} ( \mathbb{V}))$. Let $R$ be the finite Riemann surface and $\xi$ the map from $\overline{R}$ onto $\mc{V}\cap\overline{\DD}^n$ given by Proposition \ref{GetR}. Writing $\xi=(\xi_1,\dots,\xi_n)$, we note that $\xi_1, \ldots,\xi_n$ are unimodular on $\partial R$ and thus determine isometric multiplication operators on $H^2( R, \mathfrak{X})$ for any Hilbert space $\mathfrak{X}$. We abbreviate the $n$-tuple of multiplication operators $(M_{\xi_1},\dots,M_{\xi_n})$ by $\mathbb{M}_\xi$. Fix $x_0 \in R$ and
let $\omega$ denote harmonic measure at $x_0$. Recall that $A_{\xi} ( R)$
is the uniform closure in $A(R)$ of the unital algebra generated by
$\xi_1, \ldots, \xi_n$. We require the following result, whose proof is contained in that of \cite[Lemma 3.4]{AKM} for $n=2$ and \cite[Lemma 7.18]{Timko} for $n>2$. Here we sketch the proof for the reader's convenience.

\begin{lemma}\label{AbsContLem}
  Let $\nu$ be a diffuse finite positive measure on $\pd R$ and denote by $\WW$ the $n$-tuple $(\xi_1,\dots,\xi_n)$ acting by multiplication on the $L^2(\nu)$-closure of $A_\xi(R)$. If $\WW$ an $n$-tuple of commuting shifts of finite multiplicity, then $\nu\ll\grw$. 
\end{lemma}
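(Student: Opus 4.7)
I would argue by contradiction. Let $\nu=\nu_a+\nu_s$ be the Lebesgue decomposition of $\nu$ with respect to $\grw$, and suppose $\nu_s\neq 0$. The goal is to produce a nonzero $M_{\xi_1}$-invariant subspace of $\frk{H}_\xi:=\overline{A_\xi(R)}^{L^2(\nu)}$ on which $M_{\xi_1}$ acts unitarily, contradicting the assumption that $M_{\xi_1}|\frk{H}_\xi$ is a shift. The observation driving the argument is that $L^2(\nu_s)$ is a reducing subspace of $L^2(\nu)$ for each $M_{\xi_i}$, and on this subspace each $M_{\xi_i}$ is unitary, since $\xi_i$ is unimodular on $\pd R$ and therefore invertible in $L^\infty(\nu_s)$.

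\textbf{Key steps.} First, I would show that $L^2(\nu_s)\sbse \frk{H}:=\overline{A(R)}^{L^2(\nu)}$. This follows from the F.\ and M.\ Riesz theorem for the uniform algebra $A(R)$ on the finite Riemann surface $R$: every complex Borel measure on $\pd R$ annihilating $A(R)$ is absolutely continuous with respect to $\grw$. Indeed, if $h\in\frk{H}^\bot$, then $\bar h\, d\nu$ annihilates $A(R)$ and hence is $\ll\grw$; since $\nu_s\bot\grw$, this forces $h=0$ $\nu_s$-a.e., so $\frk{H}^\bot\sbse L^2(\nu_a)$ and $L^2(\nu_s)\sbse \frk{H}$. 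Second, the proof of Lemma \ref{GetQLem} establishes, en route, that $A_\xi(R)$ has finite codimension in $A(R)$; taking $L^2(\nu)$-closures, $\frk{H}_\xi$ has finite codimension in $\frk{H}$. Consequently $\frk{L}:=\frk{H}_\xi\cap L^2(\nu_s)$ has finite codimension in $L^2(\nu_s)$. Since $\nu$ is diffuse and $\nu_s\neq 0$, $L^2(\nu_s)$ is infinite-dimensional, and so $\frk{L}\neq 0$.

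\textbf{Contradiction.} The subspace $\frk{L}$ is invariant under every $M_{\xi_i}$, since $\frk{H}_\xi$ is $M_{\xi_i}$-invariant (because $\xi_i\in A_\xi(R)$) and $L^2(\nu_s)$ is $M_{\xi_i}$-reducing in $L^2(\nu)$. Fixing $i=1$, the operator $M_{\xi_1}$ is unitary on $L^2(\nu_s)$, and an elementary dimension count (any finite-codimensional invariant subspace of a unitary is reducing) shows that $M_{\xi_1}|\frk{L}$ is unitary. Therefore $\frk{L}\sbse \bigcap_{k\geq 0}M_{\xi_1}^k\frk{H}_\xi$, so the unitary part in the Wold decomposition of $M_{\xi_1}|\frk{H}_\xi$ is nonzero, contradicting the hypothesis that $M_{\xi_1}|\frk{H}_\xi$ is a shift. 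Hence $\nu_s=0$, i.e., $\nu\ll\grw$.

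\textbf{Main obstacle.} The central technical ingredient is the F.\ and M.\ Riesz theorem for $A(R)$ on a finite Riemann surface, which is the step that converts the existence of a singular part of $\nu$ into a unitary summand. This can be cited from the theory of uniform algebras---$A(R)$ is a Dirichlet-type subalgebra of $C(\pd R)$ with $\grw$ as a representing measure at $x_0$---or derived from the classical disc version by transporting through the universal covering $\tau:\DD\to R$ and using that $\grw$ pulls back to Lebesgue measure on $\pd \DD$ under $\tau$.
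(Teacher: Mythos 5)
Your proof is correct, but it takes a genuinely different route from the paper. Both arguments aim to manufacture a nonzero unitary part in a shift, but they diverge at the key technical step. The paper does not invoke the full F.\ and M.\ Riesz theorem for $A(R)$. Instead it uses the weaker assertion, via Forelli's lemma for the hypo-Dirichlet algebra $A(R)$, that $\chi_E\in A^2(\nu)$ where $E$ is an $F_\sigma$ carrier of $\nu_s$; it then pushes $\nu_s$ forward to $\pd\DD$ via $\xi_n$ and applies the Kolmogorov--Krein (Szeg\H{o}) theorem to show that $M_{\xi_n}$ acts unitarily on the cyclic subspace $\clos_{L^2(\nu)}(\CC[\xi_n]\chi_E)\subseteq A^2(\nu)$, forcing $\chi_E=0$. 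Your argument instead invokes the strong F.\ and M.\ Riesz theorem --- every measure annihilating $A(R)$ is absolutely continuous with respect to $\grw$ --- to obtain the cleaner containment $L^2(\nu_s)\subseteq \overline{A(R)}^{L^2(\nu)}$, and then exploits the finite codimension of $A_\xi(R)$ in $A(R)$ (established in Lemma~\ref{GetQLem}) together with the elementary fact that a finite-codimensional invariant subspace of a unitary is reducing. This is shorter and more conceptual, and buys you a direct Wold-type contradiction without the Kolmogorov--Krein detour. The cost is that the F.\ and M.\ Riesz theorem for $A(R)$ on a general finite Riemann surface is itself a nontrivial result: it does not follow formally from the abstract F.\ and M.\ Riesz theorem for hypo-Dirichlet algebras (which only decomposes an annihilating measure into an absolutely continuous part and a singular part both annihilating $A(R)$, and does not by itself kill the singular part), and it is not obtained simply by ``pulling back $\grw$ to Lebesgue measure on $\pd\DD$'' since the pullback of a boundary measure through the infinite covering $\tau$ is not finite. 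The theorem is true, but you should cite a specific source (e.g.\ the structure of $A(R)^\perp$ in Ahern--Sarason, or the treatment in Gamelin's book) rather than gesture at ``the theory of uniform algebras''. With a precise citation in place, your proof is a valid and somewhat slicker alternative to the one in the paper.
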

\begin{proof}
	Denote by $A^2_{\xi}(\nu)$ and $A^2(\nu)$ the $L^2(\nu)$-closures of $A_\xi(R)$ and $A(R)$, respectively, and denote by $\mathbb{U}$ the $n$-tuple $(\xi_1,\dots,\xi_n)$ acting on $A^2(\nu)$ by multiplication. Note that $A_\xi^2(\nu)$ has finite codimension in $A^2(\nu)$ and $\WW=\mathbb{U}|A_\xi^2(\nu)$, Because $\nu$ has no atoms, $U_n$ has no eigenvalues. By Lemma \ref{LessSUFEq}, it follows that $U_i$ is a shift of finite multiplicity for $i=1,\dots,n$.

We decompose $\nu$ as $hd\grw+d\nu_s$, where $\nu_s\bot \grw$ and $h$ is a non-negative element of $L^1(\grw)$. Because $A(R)$ is a hypo-Dirichlet algebra \cite[Lem. 1]{Wermer}, it follows from \cite[Sec. 3]{AhrnSar} that every representing measure for the character $f\mapsto f(x_0)$ on $A(R)$ is absolutely continuous with respect to $\grw$. Here we use the fact that $\grw$ is an Arens-Singer measure. By \cite[Lem. II.7.4]{Gamelin}, there exists an $F_\grs$-set $E$ of harmonic measure $0$ such that $\nu_s(\pd R\bksl E)=0$. Applying Forelli's Lemma, we find that $\chi_E\in A^2(\nu)$, which is to say that $A^2(\nu)=A^2(hd\grw)\oplus A^2(\nu_s)$. Note that $(\nu_s\of\xi_n^{-1})(\pd\DD\bksl\xi_n(E))=0$. As $\xi_n$ is piecewise smooth on $\pd R$ and $\grw(E)=0$, it follows that $\xi_n(E)\sbse \pd\DD$ has arc-length measure $0$. That is, $\nu_s\of\xi_n^{-1}$ is singular relative to Lebesgue measure on $\pd \DD$. By the Kolmogorov-Krein theorem for the disc
\[ 0=\inf_{f\in A(\DD)}\int_{\pd\DD}|1-z f(z)|^2d(\nu_s\of\xi_n^{-1})(z)=\inf_{f\in A(\DD)}\int_{\pd R}|1-\xi_n(f\of\xi_n)|^2d\nu_s. \]
That is, $\chi_E\in \xi_n\mathrm{clos}_{L^2(\nu)}(\CC[\xi_n]\chi_E)$ and $f\mapsto \xi_n f$ is a unitary operator on $\mathrm{clos}_{L^2(\nu)}(\CC[\xi_n]\chi_E)$. Because $f\mapsto \xi_n f$ on $A^2(\nu)$ is a shift, it follows that $\nu_s(E)=\|\chi_E\|^2_{L^2(\nu)}=0$.
\end{proof}

In the following, we denote by $A(R,\CC^k)$ the linear space of all continuous functions $f:\cc{R}\to\CC^k$ that are analytic on $R$. If each component of $f$ is an element of $A_\xi(R)$ as well, then we write $f\in A_\xi(R,\CC^k)$. We view both $A(R,\CC^k)$ and $A_\xi(R,\CC^k)$ as subspaces of $L^2(\pd R,\CC^k)$.

\begin{lemma}\label{SubNormLem}
  Assume that $\VV$ has cyclic set of size $k$.
  \begin{enumerate}
    \item[\rm{(i)}] There exists a $k \times k$ matrix-valued measurable function $\Gamma$ on $\partial R$ such that $\VV$ is unitarily equivalent to $\mathbb{M}_{\xi} |\mathfrak{N}$, where
      \[ \mathfrak{N}=\mathrm{clos}_{L^2(\pd R,\CC^k)}\{ \Gamma f : f \in A_{\xi} (R,\CC^k) \}. \]
    \item[\rm{(ii)}] There exists a pure $H^{\infty} (R)$-invariant subspace of finite codimension in $\mathfrak{N}$.
  \end{enumerate}
\end{lemma}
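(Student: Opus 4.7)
The plan for (i) is to represent $\VV$ through its minimal normal extension $\widetilde{\VV}$, whose joint spectrum lies in $Z(\Ann(\VV))\cap\TT^n=\xi(\pd R)$, and to transfer the resulting spectral measure back to $\pd R$. The continuous functional calculus of $\widetilde{\VV}$ restricts to $A_\xi(R)$ and preserves $\frk{H}$, giving a homomorphism $A_\xi(R)\ni f\mapsto f(\VV)\in\mc{B}(\frk{H})$. Define
\[ U:A_\xi(R,\CC^k)\to\frk{H},\qquad U(f_1,\dots,f_k)=\sum_{i=1}^k f_i(\VV)h_i, \]
which has dense range by cyclicity. With $E$ the joint spectral measure of $\widetilde{\VV}$ and $\mathbf{m}_{ij}=\langle E(\cdot)h_i,h_j\rangle$, one has $\|Uf\|^2=\int f^*\,d\mathbf{m}\,f$; pulling back through $\xi$ (a diffeomorphism off a finite subset of $\cc{R}$ by Proposition \ref{GetR}(iii)) yields a positive $k\times k$ matrix-valued measure $\mathbf{M}$ on $\pd R$ satisfying the same identity for $f\in A_\xi(R,\CC^k)$. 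For each $e\in\CC^k$, Lemma \ref{AbsContLem} applies to $\nu_e:=e^*\mathbf{M}e$: the tuple $\WW$ on $A_\xi^2(\nu_e)$ identifies with $\VV$ restricted to $\cc{A_\xi(R)(\sum_i e_i h_i)}$, an $n$-tuple of shifts of finite multiplicity (restrictions of shifts to invariant subspaces), and $\nu_e$ is diffuse since an atom at $x_0$ would furnish an eigenvector of $V_n$ on that invariant subspace, contradicting the shift property. Hence $\nu_e\ll\omega$ for each $e$, so $d\mathbf{M}=\mathbf{h}\,d\omega$ with $\mathbf{h}$ a positive semidefinite matrix-valued $L^1(\omega)$ density. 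Setting $\Gamma=\mathbf{h}^{1/2}$ gives $\|Uf\|^2=\|\Gamma f\|_{L^2(\pd R,\CC^k)}^2$, so $U$ extends to a unitary from $\mathfrak{N}:=\cc{\Gamma A_\xi(R,\CC^k)}$ onto $\frk{H}$ intertwining $\mathbb{M}_\xi|\mathfrak{N}$ with $\VV$.

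For (ii), take the polynomial $Q$ from Lemma \ref{GetQLem} and set $\mathfrak{N}':=\cc{\Gamma Q(\xi_n)A(R,\CC^k)}\sbse\mathfrak{N}$, where all closures are taken in $L^2(\pd R,\CC^k)$. Since $A(R)/A_\xi(R)$ and $A(R)/Q(\xi_n)A(R)$ are both finite-dimensional, so is $A_\xi(R)/Q(\xi_n)A(R)$, and consequently $\mathfrak{N}'$ has finite codimension in $\mathfrak{N}$. For $H^\infty(R)$-invariance, I would approximate each $h\in H^\infty(R)$ by a uniformly bounded sequence $h_m\in A(R)$ (for instance via Cauchy integrals on an exhaustion of $R$) and apply dominated convergence to $h_m\Gamma Q(\xi_n)f\to h\Gamma Q(\xi_n)f$ with dominator $C^2|\Gamma Q(\xi_n)f|^2\in L^1(\omega)$. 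Finally, $\mathfrak{N}'$ is pure: the unitary of (i) carries it to a $\VV$-invariant subspace of $\frk{H}$, and any reducing subspace of $\mathfrak{N}'$ on which every element of $H^\infty(R)|\mathfrak{N}'$ acts normally would yield a subspace on which each $M_{\xi_i}$ is a normal isometry, hence unitary---contradicting the shift property of $V_i$.

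The principal obstacle lies in step (i): constructing the $k\times k$ matrix-valued measure $\mathbf{M}$ on $\pd R$ requires the pull-back through the generically-injective map $\xi$ provided by Proposition \ref{GetR}(iii), and applying Lemma \ref{AbsContLem} requires first showing that each scalar direction $\nu_e$ is diffuse by identifying atoms with joint eigenvectors of $\VV$ restricted to invariant subspaces.
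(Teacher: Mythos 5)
Your overall strategy matches the paper's closely: construct $\Gamma$ by passing to the minimal normal extension, pull the matrix spectral measure back to $\partial R$ through $\xi$, apply Lemma~\ref{AbsContLem} to obtain absolute continuity with respect to $\omega$, take the square root of the density for $\Gamma$, and then for part (ii) use the polynomial $Q$ from Lemma~\ref{GetQLem} to cut down to a finite-codimensional $H^\infty(R)$-invariant subspace, with the shift property of $V_1$ giving purity. So the architecture is the same.

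There is, however, a genuine gap at exactly the place you flagged as the ``principal obstacle.'' You assert that $\nu_e$ is diffuse because ``an atom at $x_0$ would furnish an eigenvector of $V_n$ on that invariant subspace.'' This is not what an atom gives you. An atom at $x_0$ produces the nonzero function $\chi_{\{x_0\}}\in L^2(\nu_e)$, which is a joint eigenvector of the multiplication operators on \emph{all} of $L^2(\nu_e)$ --- that is, of the minimal normal (unitary) extension of $\mathbb{M}_\xi|A_\xi^2(\nu_e)$ --- and there is no reason it should lie in $A_\xi^2(\nu_e)$ itself. A unitary extension of a shift can a priori have eigenvectors, so no contradiction is obtained without further work. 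The paper's proof closes exactly this gap: taking $v=E(\{w\})h_i$ in the eigenspace of $\widetilde{\VV}$, it shows, using $V_1^{*\ell}\to 0$ strongly (the shift property) and the identity
$|\langle v,\widetilde{\VV}^{*\beta}g\rangle| = |w^\beta w_1^\ell|\,|\langle P_{\mathfrak{H}}v,V_1^\ell g\rangle|$ for all $\ell$,
that $v$ is orthogonal to the dense set $\bigvee_\beta\widetilde{\VV}^{*\beta}\mathfrak{H}$, hence $v=0$. Without this orthogonality step, the atomlessness of $\nu_e$ --- a hypothesis of Lemma~\ref{AbsContLem} --- is unjustified, and the appeal to that lemma becomes circular. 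A secondary, minor point: for $H^\infty(R)$-invariance in (ii), the paper simply invokes $\operatorname{clos}_{L^2(\omega)}A(R)=H^2(R)$ (citing Gamelin--Lumer); your dominated-convergence sketch via uniformly bounded approximants in $A(R)$ needs that same density fact to be made rigorous, so it is not really a shortcut.
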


\begin{proof}
  Let $\widetilde{\VV}$ denote the minimal unitary extension of $\VV$ to a Hilbert space $\widetilde{\frk{H}}$, and let $h_1, \ldots, h_k \in \mathfrak{H}$ form a cyclic set for $\mathbb{V}$. There is a projection valued measure $E$ concentrated on $\mathcal{V} \cap ( \partial \mathbb{D})^n$, coming from $\widetilde{\VV}$, such that
  \[ \langle p ( \mathbb{V}) h_i, q ( \mathbb{V}) h_j \rangle =
     \int_{\mathcal{V} \cap ( \partial \mathbb{D})^n} p ( z) \overline{q (
     z)} d \mu_{i j} ( z), \hspace{2em} d \mu_{i j} = \langle d E \cdot h_i,
     h_j \rangle, \]
  for $i,j=1,\dots,k$. From this we readily deduce that each $\mu_{i j}$ is absolutely continuous
  with respect to $\mu=\sum_i \mu_{i i}$.
  
  We claim that $\mu_{i i}$ has no atoms. Take $w \in \mathbb{C}^n$ and note that $\mu_{i i} ( \{ w \}) = \| E ( \{ w \}) h_i \|^2$. Fixing $v \in E ( \{ w \})$, we we find that for any $\beta \in (\NN_0)^{n}$, $\ell \in \{ 1, 2, \dots \}$, and $g \in  \mathfrak{H}$,
  \[ | \langle v, \tilde{\mathbb{V}}^{\ast \beta} g \rangle | = | w^{\beta}
     w_1^{\ell}|\cdot| \langle v, \widetilde{V}_1^{\ell} g \rangle | = |\langle v, V_1^{\ell} g \rangle | = | \langle P_{\mathfrak{H}}
     v, V_1^{\ell} g \rangle | . \]
  Sending $\ell \rightarrow \infty$, we find that $v$ is orthogonal to vectors of the form $\tilde{\mathbb{V}}^{\ast \beta} g$. As the set of such vectors is dense in $\tilde{\mathfrak{H}}$, it follows that $v = 0$ and $\mu_{ii}$ has no atoms.
  
	Because $\xi$ sends a cofinite subset of $\pd R$ homeomorphically onto a cofinite subset of $\mc{V}\cap(\pd\DD)^n$, the pull-back measure $\nu_{i i} = \mu_{i i} \circ \xi$ is well-defined and defuse. The restriction of $\VV$ to $\bigvee_{\beta\in\NN_0^n} \mathbb{V}^{\beta} h_i$ is unitarily equivalent to $\mathbb{M}_\xi$ restricted to $A^2_\xi(\nu_{ii})$ and so, by Lemma \ref{AbsContLem}, we have that $\nu_{i i} \ll \omega$. In particular, $\nu=\sum_i\nu_{ii}$ is absolutely continuous with respect to $\grw$. Let $\grG$ be given by $( \Gamma^2)_{i j} = \left( \frac{d \mu_{j
  i}}{d \mu} \circ \xi \right) \frac{d \nu}{d \omega}$ for each $i$ and $j$, and note that
  \[ \langle p ( \mathbb{V}) h_i, q ( \mathbb{V}) h_j \rangle =
     \int_{\mathcal{V} \cap ( \partial \mathbb{D})^n} \sum_{\ell = 1}^k (
     \Gamma_{\ell i} \cdot p \circ \xi) \overline{( \Gamma_{\ell j} \cdot q
     \circ \xi)} d \omega \]
     for $p,q\in\CC[x_1,\dots,x_n]$. Assertion (i) is now proved.

  Recall that there is a single-variable polynomial $Q$ such that $Q(\theta_n)A(R)$ is contained with finite codimension in $A_{\theta} ( R)$. Therefore $\mathfrak{M} = \mathrm{clos}_{L^2(\grw)} \{ Q(\theta_n)\Gamma f : f \in A(R,\CC^k)  \}$ has finite codimension in $\mathfrak{N}$. Because $\mathrm{clos}_{L^2(\grw)} A (R) = H^2 (R)$ (see \cite[pg. 168]{GamAndLum}), we conclude that $\mathfrak{M}$ is $H^{\infty}(R)$-invariant. It remains to show that $\frk{M}$ is pure invariant. Denote by $U:\frk{N}\to\frk{H}$ the unitarily equivalence given by assertion (i), and let $\frk{R}\sbse\frk{M}$ be a $H^\infty(R)|\frk{M}$-invariant subspace on which each element of $H^\infty(R)$ acts as a normal operator. Then $M_{\xi_1}|\frk{R}$ is a unitary operator, whence $V_1|U\frk{R}$ is unitary. But $V_1$ is a shift and therefore has no unitary summands. That is, $\frk{R}=\{0\}$ and $\frk{M}$ is pure invariant.
\end{proof}

Let $\tau : \mathbb{D} \rightarrow R$ denote the universal covering map appearing in Section 3, and let $G$ denote the group of deck transformations associated with $\tau$. We set $\eta=(\eta_1,\dots,\eta_n)=\xi\of\tau$ and denote by $A_{\eta} (
\mathbb{D})$ the closed unital subalgebra generated by $\eta_1, \ldots, \eta_n$ in the disc algebra $A(\DD)$, and by $A^2_\eta(\DD)$ the $L^2$-closure of $A_\eta(\DD)$. Note that each $\eta_i$ is a
$G$-invariant inner function on $\mathbb{D}$. Given a representation $\alpha
: G \rightarrow \mathrm{U} ( \mathbb{C}^k)$, we fix a $k\times k$ matrix-valued analytic function $\Phi_{\alpha}$ on $\mathbb{D}$ so that $f
\mapsto \Phi_{\alpha} f$ is boundedly invertible from $H^2 ( \mathbb{D},
\mathbb{C}^k)^G$ onto $H^2_{\alpha} ( \mathbb{D}, \mathbb{C}^k)$.

\begin{lemma}
  \label{UniModel} There is a finite codimensional $\mathbb{V}$-invariant subspace $\mathfrak{H}'\sbse\frk{H}$ such
  that $\mathbb{V}|\mathfrak{H}'$ is similar to $\mathbb{M}_{\eta} |A^2_{\eta} (
  \mathbb{D}, \mathbb{C}^k)$ for $k = \kappa ( \mathbb{V})$.
\end{lemma}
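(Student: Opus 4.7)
The plan is to transport $\VV$ through a chain of (virtual) similarities until it reaches the model $\mathbb{M}_\eta|A^2_\eta(\DD,\CC^k)$, with Theorem \ref{AnDInvarSbspc} as the main structural tool. First, by Lemma \ref{SubNormLem} I may assume $\VV$ is unitarily equivalent to $\mathbb{M}_\xi|\frk{N}$, and then replace $\frk{N}$ by the pure $H^\infty(R)$-invariant subspace $\frk{M}$ of finite codimension produced in Lemma \ref{SubNormLem}(ii); by the corollary following Lemma \ref{BlashProp}, this reduction costs only virtual similarity. Composing with the unitary pullback $f\mapsto f\circ\tau$ from $L^2(\pd R,\CC^k)$ onto $L^2(\pd\DD,\CC^k)^G$, which conjugates $\mathbb{M}_\xi$ with $\mathbb{M}_\eta$ and $H^\infty(R)$ with $H^\infty(\DD)^G$, I obtain a pure $H^\infty(\DD)^G$-invariant subspace $\wtil{\frk{M}}\sbse L^2(\pd\DD,\CC^k)^G$ with $\mathbb{M}_\xi|\frk{M}\equiv\mathbb{M}_\eta|\wtil{\frk{M}}$.

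Now I apply Theorem \ref{AnDInvarSbspc} to write $\wtil{\frk{M}}=\Psi\Phi_\gra H^2(\DD,\frk{Y})^G$ for some Hilbert space $\frk{Y}$, a representation $\gra:G\to\mathrm{U}(\frk{Y})$, and a $\mc{B}(\frk{Y},\CC^k)$-valued function $\Psi$ with $\Psi(z)$ a pointwise isometry a.e. Because each $\eta_i$ is scalar-valued, $M_\Psi$ and $M_{\Phi_\gra}$ both commute with $\mathbb{M}_\eta$; further, $M_\Psi$ is a unitary from $H^2_\gra(\DD,\frk{Y})$ onto $\wtil{\frk{M}}$, and by Corollary \ref{GetPhi} $M_{\Phi_\gra}$ is a boundedly invertible isomorphism from $H^2(\DD,\frk{Y})^G$ onto $H^2_\gra(\DD,\frk{Y})$. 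Their composition therefore similarity-intertwines $\mathbb{M}_\eta|\wtil{\frk{M}}$ with $\mathbb{M}_\eta|H^2(\DD,\frk{Y})^G$.

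The next step is to exhibit $A^2_\eta(\DD,\frk{Y})$ as a finite-codimensional $\mathbb{M}_\eta$-invariant subspace of $H^2(\DD,\frk{Y})^G$: within the proof of Lemma \ref{GetQLem} one sees (via the commuting square of $\mathrm{Re}$-algebras) that $A_\xi(R)$ has finite codimension in $A(R)$, so taking $L^2(\grw)$-closures and pulling back by $\tau$ yields the claim, first over $\CC$ and then, tensoring by $\frk{Y}$, for arbitrary finite-dimensional $\frk{Y}$. To conclude the model identification I must show $\dim\frk{Y}=k$. The upper bound $\dim\frk{Y}\leq k$ is immediate from the fact that $\Psi(z)$ is an isometry into $\CC^k$ almost everywhere. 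For the lower bound, an orthonormal basis of $\frk{Y}$ regarded as constant functions in $H^2(\DD,\frk{Y})^G$ generates $A^2_\eta(\DD,\frk{Y})$, which is finite-codimensional; hence $\grk(\mathbb{M}_\eta|H^2(\DD,\frk{Y})^G)\leq\dim\frk{Y}$, and by Lemma \ref{KappaLem}, together with $\grk(\VV)=k$ and the similarity established above, we get $k\leq\dim\frk{Y}$.

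With $\frk{Y}$ identified with $\CC^k$, take $\frk{H}'$ to be the image of $A^2_\eta(\DD,\CC^k)$ under the chain of isomorphisms traced back to the original $\frk{H}$. By construction $\frk{H}'$ is $\VV$-invariant, of finite codimension in $\frk{H}$, and $\VV|\frk{H}'$ is similar to $\mathbb{M}_\eta|A^2_\eta(\DD,\CC^k)$. The main obstacle is the dimension computation, as it must combine the pointwise isometry property of $\Psi$ furnished by Theorem \ref{AnDInvarSbspc} with the cyclicity invariant from Lemma \ref{KappaLem}; in particular, verifying finite codimension of $A^2_\eta$ inside $H^2(\DD,\frk{Y})^G$ via the $\mathrm{Re}$-algebra argument of Lemma \ref{GetQLem} is essential both to produce the finite-codim step and to bound $\grk$ from above by $\dim\frk{Y}$.
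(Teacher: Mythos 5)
Your proof is correct and takes essentially the same route as the paper: reduce via Lemma \ref{SubNormLem}(ii) to a pure $H^\infty(\DD)^G$-invariant subspace of $L^2(\pd\DD,\CC^k)^G$, invoke Theorem \ref{AnDInvarSbspc} to write it as $\Psi\Phi_\alpha H^2(\DD,\frk{Y})^G$, and then pin $\dim\frk{Y}=k$ by combining the a.e.\ isometry of $\Psi$ (giving $\dim\frk{Y}\le k$) with a cyclicity count (giving $k\le\dim\frk{Y}$). The only cosmetic differences are that you route the lower bound through Lemma \ref{KappaLem} applied to $\mathbb{M}_\eta|H^2(\DD,\frk{Y})^G$ rather than observing directly that $\{U(\Psi\Phi_\alpha e_i)\}_{i=1}^{\dim\frk{Y}}$ is a virtually cyclic set for $\VV$, and you leave implicit the paper's opening normalization that one may assume $\VV$ itself is $k$-cyclic (needed before invoking Lemma \ref{SubNormLem}, whose hypothesis is a cyclic set of size $k$); both are harmless.
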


\begin{proof}
  Without loss of generality, we assume that $\VV$ is also $k$-cyclic. By Lemma \ref{SubNormLem}(ii), there is a $\mathbb{V}$-invariant subspace $\frk{R}$ of
  finite codimension in $\mathfrak{H}$ so that
  $\mathbb{V}|\frk{R}$ is unitarily equivalent to $\mathbb{M}_{\eta} |\mathfrak{M}$
  where $\mathfrak{M}$ is a pure $H^{\infty} ( \mathbb{D})^G$-invariant subspace
  of $L^2 ( \partial \mathbb{D}, \mathbb{C}^k)^G$. Let $U:\frk{M}\to\frk{R}$ denote the unitary equivalence. Thus, by Theorem
  \ref{AnDInvarSbspc}, there is a Hilbert space $\frk{Y}$, a $\mc{B}(\frk{Y},\CC^k)$-valued weakly measurable function $\Psi$
  on $\partial \mathbb{D}$, and a unitary representation $\alpha$ of $G$ on $\frk{Y}$ such that $\mathfrak{M}= \Psi \Phi_{\alpha} H^2 ( \mathbb{D}, \frk{Y})^G$. Moreover, $\Psi|\pd\DD$ is a.e. isometric and thus $k\geq r=\dim\frk{Y}$.
  
  With $e_1,\dots,e_r$ denoting an orthonormal basis for $\frk{Y}$, we set $f_i=\Psi\Phi_\gra e_i$ for $i=1,\dots, r$. Because $A_\eta^2(\DD)$ has finite codimension in $H^2(\DD)^G$, the subspace $\bigvee_{i=1}^n A_\eta(\DD)f_i$ has finite codimension in $\frk{M}$. Thus $\frk{H}'=\bigvee_{i=1}^r\bigvee_{\grb\in\NN_0^k}\VV^\grb Uf_i$ is a subspace of finite codimension in $\frk{H}$. By definition of $\grk(\VV)$, we have that $k\leq r$ and thus $k=r$.
\end{proof}

\begin{theorem}
  \label{MainThm} Let $\VV$ and $\WW$ be $n$-tuples of commuting shifts of finite multiplicity. If $\mathrm{Ann} (\mathbb{V})$ is a prime ideal, then $\mathbb{V} \approx \mathbb{W}$ if
  and only if $\mathrm{Ann} ( \mathbb{V}) = \mathrm{Ann} ( \mathbb{W})$ and
  $\kappa ( \mathbb{V}) = \kappa ( \mathbb{W})$.
\end{theorem}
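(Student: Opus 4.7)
The plan is to handle the two directions separately, with the forward implication being essentially free from material already assembled and the converse being a short exercise in combining the universal model supplied by Lemma \ref{UniModel} with the upgrade mechanism of Lemma \ref{LessSUFEq}.

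For the forward implication, suppose $\VV\approx\WW$. Then Corollary \ref{VS:ApproxEqAnn} immediately yields $\Ann(\VV)=\Ann(\WW)$, and since in particular $\VV\lesssim\WW$, Lemma \ref{KappaLem} gives $\kappa(\VV)=\kappa(\WW)$.

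For the converse, suppose $\Ann(\VV)=\Ann(\WW)$ is prime and set $k:=\kappa(\VV)=\kappa(\WW)$. The common annihilator determines a common variety $\mc{V}$, hence a common finite Riemann surface $R$, a common map $\xi$ from $\cc{R}$ onto $\mc{V}\cap\cc{\DD}^n$, a common universal covering $\tau$ with deck group $G$, and therefore a common $\eta=\xi\of\tau$. Consequently, the model tuple $\mathbb{N}:=\mathbb{M}_\eta|A_\eta^2(\DD,\CC^k)$ is canonically attached to both $\VV$ and $\WW$. Lemma \ref{UniModel} applied to each of $\VV$ and $\WW$ produces finite codimensional invariant subspaces of $\frk{H}$ and $\frk{K}$ on which the restrictions of $\VV$ and $\WW$ are each similar to $\mathbb{N}$. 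Unwinding the definition of $\lesssim$, this reads $\mathbb{N}\lesssim\VV$ and $\mathbb{N}\lesssim\WW$.

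It remains to upgrade each one-sided relation to $\approx$ and then combine them by transitivity. Since every $\eta_i$ is inner, $\mathbb{N}$ is an $n$-tuple of commuting isometries; and $V_n$ has no eigenvalues because it is a shift, so Lemma \ref{LessSUFEq} applies and promotes $\mathbb{N}\lesssim\VV$ to $\mathbb{N}\approx\VV$, along the way certifying that the entries of $\mathbb{N}$ are themselves shifts of finite multiplicity. The identical argument gives $\mathbb{N}\approx\WW$. Transitivity of $\approx$ is straightforward: given $\VV\lesssim\mathbb{N}$ through an $\mathbb{N}$-invariant subspace $\frk{N}'$ of finite codimension and $\mathbb{N}\lesssim\WW$ through a similarity $T$ onto a finite codimensional $\WW$-invariant subspace of $\frk{K}$, the image $T\frk{N}'$ is a $\WW$-invariant subspace of finite codimension in $\frk{K}$ on which $\WW$ is similar to $\VV$, and the reverse direction is analogous. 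Combining yields $\VV\approx\mathbb{N}\approx\WW$.

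All of the real content is concentrated in Lemma \ref{UniModel}, where the bundle-theoretic machinery of Section 3 was required to show that the model $\mathbb{N}$ depends only on the pair $(\Ann(\VV),\kappa(\VV))$; once that canonical form is in hand, the present theorem reduces to the bookkeeping outlined above, and no new obstacle remains.
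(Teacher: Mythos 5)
Your proposal is correct and follows essentially the same approach as the paper: the forward direction uses Corollary \ref{VS:ApproxEqAnn} and Lemma \ref{KappaLem}, and the converse uses Lemma \ref{UniModel} to produce a common model depending only on $(\Ann(\VV),\kappa(\VV))$, upgrades $\lesssim$ to $\approx$ via Lemma \ref{LessSUFEq}, and concludes by transitivity. You spell out the transitivity of $\approx$ a bit more explicitly than the paper does, but the substance is identical.
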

\begin{proof}
  If $\VV\approx \WW$, then it follows from Corollary \ref{VS:ApproxEqAnn} that $\Ann(\VV)=\Ann(\WW)$,  and from Lemma \ref{KappaLem} that $\grk(\VV)=\grk(\WW)$.

  Assume that $\Ann(\VV)=\Ann(\WW)$ and $\grk(\VV)=\grk(\WW)$. By Lemma \ref{UniModel}, there is an $n$-tuple of commuting shifts of finite multiplicity $\mathbb{U}$, which depends up to similarity only on $\Ann(\VV)$ and $\kappa(\VV)$, such that $\mathbb{U}\lesssim \VV$. By Proposition \ref{LessSUFEq}, it follows that $\mathbb{U}\approx \VV$. A similar argument shows that $\mathbb{U}\approx\WW$ as well.
\end{proof}

\subsection{The General Case}

  If $\mathrm{Ann} ( \mathbb{V})$ is not prime, then there exists a
  unique finite collection of prime ideals $\mathcal{I}_1, \ldots,
  \mathcal{I}_m$ with $m > 1$ such that $\mathrm{Ann} ( \mathbb{V}) =
  \bigcap_{i = 1}^m \mathcal{I}_i$ and $\mathrm{Ann}(\VV)\subsetneq \bigcap_{i\neq j}\mc{I}_i$ for $j=1,\dots,m$. We call these ideals the \emph{prime factors} of $\Ann(\VV)$. For  $i = 1, \ldots, m$ we set $\widehat{\mathcal{I}}_i = \bigcap_{j \neq i} \mathcal{I}_j$, and define the subspaces
  \[ \mathfrak{H}_i = \mathrm{clos} \{ p ( \mathbb{V}) f : p \in
     \widehat{\mathcal{I}}_i, f \in \mathfrak{H} \}, \quad
     \mathfrak{H}_i^+ = \left\{ f \in \mathfrak{H}: p ( \mathbb{V}) f = 0\text{ for all } p \in \mathcal{I}_i \right\}. \]
		 Note that $\frk{H}_i\sbse \frk{H}_i^+$ for each $i$. Because the ideals of $\CC[x_1,\dots,x_n]$ are finitely generated, the $n$-tuple $\VV|\frk{H}_i$ has a finite cyclic set. By \cite[Thm 6.2]{Timko}, each element of $\VV|\frk{H}_i$ has finite multiplicity.

  \begin{lemma}
    Both $\sum_{i=1}^m \mathfrak{H}_i$ and $\sum_{i=1}^m \mathfrak{H}_i^+$ have finite codimension in $\mathfrak{H}$, and
    \begin{equation}\label{GCLem1Eq1}
      \Ann(\VV|\frk{H}_j^+)=\Ann(\VV|\frk{H}_j)=\mc{I}_j,\quad j=1,\dots,m.
    \end{equation}
  \end{lemma}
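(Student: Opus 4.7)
I will treat the two assertions separately. For the identities (\ref{GCLem1Eq1}), the key point is the inclusion $\frk{H}_j \subseteq \frk{H}_j^+$: for $p \in \widehat{\mc{I}}_j$, $q \in \mc{I}_j$, and $g \in \frk{H}$, the product $qp \in \widehat{\mc{I}}_j \cdot \mc{I}_j \subseteq \bigcap_i \mc{I}_i = \Ann(\VV)$, so $q(\VV)(p(\VV)g) = 0$; since $\frk{H}_j^+$ is closed this extends to the closure $\frk{H}_j$. Combined with the definition of $\frk{H}_j^+$, this gives $\mc{I}_j \subseteq \Ann(\VV|\frk{H}_j^+) \subseteq \Ann(\VV|\frk{H}_j)$. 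The remaining inclusion $\Ann(\VV|\frk{H}_j) \subseteq \mc{I}_j$ uses primeness: if $q$ annihilates $\frk{H}_j$, then for every $p \in \widehat{\mc{I}}_j$ and $g \in \frk{H}$ we have $(qp)(\VV)g = 0$, hence $qp \in \Ann(\VV) \subseteq \mc{I}_j$; picking any $p \in \widehat{\mc{I}}_j \setminus \mc{I}_j$ forces $q \in \mc{I}_j$. Such a $p$ must exist, for otherwise $\widehat{\mc{I}}_j = \bigcap_{i \neq j}\mc{I}_i \subseteq \mc{I}_j$ would force $\mc{I}_i \subseteq \mc{I}_j$ for some $i \neq j$ by primeness, yielding $\Ann(\VV) = \bigcap_{i \neq j} \mc{I}_i$ and contradicting the irredundancy of the prime factorization.

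For the finite-codimension claim, I reduce to a purely algebraic fact and then transfer to $\frk{H}$ using a finite cyclic set. The algebraic fact is that the ideal $\sum_{i=1}^m \widehat{\mc{I}}_i$ has finite codimension in $\CC[x_1,\ldots,x_n]$. To see this, I compute
\[
Z\Bigl(\sum_{i=1}^m \widehat{\mc{I}}_i\Bigr) = \bigcap_{i=1}^m \bigcup_{j \neq i} Z(\mc{I}_j) = \bigcup_{i \neq j} \bigl( Z(\mc{I}_i) \cap Z(\mc{I}_j) \bigr),
\]
and note that each intersection $Z(\mc{I}_i) \cap Z(\mc{I}_j)$ with $i \neq j$ is a proper subvariety of the irreducible $1$-dimensional variety $Z(\mc{I}_i)$, so by (\ref{DimIneqAlgGeo}) it has dimension $0$, i.e.\ is finite. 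The zero set being finite means $\CC[x_1,\ldots,x_n]/\sum_i \widehat{\mc{I}}_i$ is a finite-dimensional $\CC$-algebra.

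To transfer this to $\frk{H}$, fix a finite cyclic set $g_1,\ldots,g_l$ for $\VV$ (guaranteed by the finite multiplicity of the $V_i$) and polynomials $r_1,\ldots,r_N$ whose images span $\CC[x_1,\ldots,x_n]/\sum_i \widehat{\mc{I}}_i$. For any $p \in \CC[x_1,\ldots,x_n]$ write $p = p' + \sum_j c_j r_j$ with $p' = \sum_i p_i$, $p_i \in \widehat{\mc{I}}_i$, so that
\[
p(\VV)g_s = \sum_i p_i(\VV)g_s + \sum_j c_j r_j(\VV)g_s \in \sum_i \frk{H}_i + F,
\]
where $F := \mathrm{span}\{r_j(\VV)g_s : 1 \leq j \leq N,\, 1 \leq s \leq l\}$ is finite-dimensional. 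The closed cyclic span $\bigvee_s \CC[\VV]g_s$ has finite codimension in $\frk{H}$ by the choice of cyclic set, and contains the linear span of the $p(\VV)g_s$; combining, $\sum_i \frk{H}_i + F$ has finite codimension in $\frk{H}$, and discarding the finite-dimensional $F$ yields the finite codimension of $\sum_i \frk{H}_i$. The analogous statement for $\sum_i \frk{H}_i^+$ is then immediate from the inclusion $\frk{H}_i \subseteq \frk{H}_i^+$. The delicate step I expect will be the codimension calculation for $\sum_i \widehat{\mc{I}}_i$, where the pure dimension $1$ and irreducibility of the prime components of $Z(\Ann\VV)$ enter decisively through (\ref{DimIneqAlgGeo}).
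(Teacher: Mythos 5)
Your proof is correct, and for the ideal identity (\ref{GCLem1Eq1}) it takes a genuinely different, more elementary route than the paper. The paper invokes the operator-theoretic fact (from \cite[Lemma 3.1]{Timko}) that the annihilator of a tuple of commuting shifts has no zero-dimensional components, combined with radicality of that annihilator and the irreducibility of $Z(\mc{I}_j)$, to force the varieties $Z(\Ann(\VV|\frk{H}_j))$ and $Z(\mc{I}_j)$ to coincide, and then passes back to ideals via the Nullstellensatz. You instead argue purely algebraically: $q\in\Ann(\VV|\frk{H}_j)$ gives $qp\in\Ann(\VV)\subseteq\mc{I}_j$ for every $p\in\widehat{\mc{I}}_j$, and then primeness of $\mc{I}_j$ together with the irredundancy hypothesis $\Ann(\VV)\subsetneq\widehat{\mc{I}}_j$ yields $q\in\mc{I}_j$. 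Your approach avoids any input beyond the commutative algebra of the situation, and in particular does not use that $\VV|\frk{H}_j$ is a shift tuple; the paper's approach uses more machinery but fits into the toolbox it has already assembled. For the finite-codimension claim the two arguments are essentially the same, reducing to finiteness of $Z(\sum_\ell\widehat{\mc{I}}_\ell)$ and transferring via a finite cyclic set; your computation of this zero set as $\bigcup_{i\neq j}(Z(\mc{I}_i)\cap Z(\mc{I}_j))$ is actually spelled out more carefully than in the paper, where the displayed set $Z(\widehat{\mc{I}}_i\cap\widehat{\mc{I}}_j)$ is a typo (that ideal is $\Ann(\VV)$, whose zero set is one-dimensional).

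One small misattribution: you cite (\ref{DimIneqAlgGeo}) to conclude that the proper closed subvariety $Z(\mc{I}_i)\cap Z(\mc{I}_j)$ of the irreducible curve $Z(\mc{I}_i)$ has dimension zero, but (\ref{DimIneqAlgGeo}) is the lower bound $\dim(\mathcal{V}\cap Z(p))\geq\dim\mathcal{V}-1$ and points the wrong way. The fact you actually need is the standard one that a proper closed subvariety of an irreducible variety has strictly smaller dimension; the conclusion is right, but the reference is not.

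Finally, both you and the paper pass from ``$\mathrm{span}\{p(\VV)h_j\}\subseteq F+\sum_i\frk{H}_i$'' to finite codimension of $\sum_i\frk{H}_i$ via density of the cyclic span; strictly speaking this only bounds the codimension of the closure of $\sum_i\frk{H}_i$. This gap closes once the next lemma shows the $\frk{H}_i$ are pairwise orthogonal (so that $\sum_i\frk{H}_i$ is already closed), or alternatively one observes that $F+\bigvee_j\frk{H}_j$ is closed because $F$ is finite-dimensional. This is the same level of precision as the paper's own display, so it is not a deficiency of your argument specifically.
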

  \begin{proof}
    Plainly $\frk{H}_i\sbse \frk{H}_i^+$ and
    \begin{equation}\label{GCLem1Eq2}
      Z(\Ann(\VV|\frk{H}_i))\sbse Z(\Ann(\VV|\frk{H}_i^+))\sbse Z(\mc{I}_i),\quad i=1,\dots,m.
    \end{equation}
    Observe that $\VV|\frk{H}_j$ is an $n$-tuple of shifts, and therefore $Z(\VV|\frk{H}_i)$ has no 0-dimensional components \cite[Lemma 3.1]{Timko}. Because $Z(\mc{I}_i)$ is an irreducible variety of dimension 1, we have that $Z(\Ann(\VV|\frk{H}_j))=\mc{I}_j$, and \eqref{GCLem1Eq1} now follows from \eqref{GCLem1Eq2}.
    
    For the remaining assertions, it suffices to show that $\sum_{j=1}^m\frk{H}_j$ has finite codimension. Recall that $V_n$ has finite multiplicity, and thus $\VV$ has a finite cyclic set $\{h_1,\dots,h_k\}$ in $\frk{H}$. Because $Z(\widehat{\mc{I}}_i\cap\widehat{\mc{I}}_j)$ is a finite set whenever $i\neq j$, the ideal $\sum_{\ell=1}^m\widehat{\mc{I}}_\ell$ has finite codimension in $\CC[x_1,\dots,x_n]$; say
    \[ \CC[x_1,\dots,x_n]=\CC\cdot p_1+\cdots+\CC\cdot p_r + \sum_{\ell=1}^m \widehat{\mc{I}}_\ell. \]
    Because each $\widehat{\mc{I}}_i$ is an ideal, we have that
    \[ \frk{H}=\sum_{\ell=1}^r\sum_{j=1}^k \CC p_\ell(\VV)h_j + \bigvee_{j=1}^m \frk{H}_j. \qedhere \]
  \end{proof}
  
  Given a polynomial $p\in\CC[x_1,\dots,x_n]$ and $z\in(\CC\bksl\{0\})^n$, we define $\iota(p)\in\CC[x_1,\dots,x_n]$  by setting
  \[ \iota(p)(z)=z_1^{d_1}\cdots z_n^{\grd_n}\cc{p(1/\cc{z}_1,\dots,1/\cc{z}_n)} \]
  where $d_1,\dots,d_n$ are the degrees of $x_1,\dots,x_n$ in $p$, respectively. We call $\grd=(d_1,\dots,d_n)$ the multi-degree of $p$. We now have
  \[ \iota(p)(\VV)=p(\VV)^*\VV^\grd. \]
  Thus $p\in\Ann(\VV)$ if and only if $\iota(p)\in\Ann(\VV)$. What follows is based on \cite[Thm. 2.1]{AKM} and \cite[Thm. 3.4]{Timko}.
  
  \begin{lemma}
    The subspaces $\frk{H}_1,\dots,\frk{H}_k$ are pairwise orthogonal, the subspaces $\frk{H}_1^+,\dots,\frk{H}_m^+$ are linearly independent, and $\frk{H}_i$ has finite codimension in $\frk{H}_i^+$ for $i=1,\dots,m$.
  \end{lemma}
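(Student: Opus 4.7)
The plan is to handle the three conclusions in sequence. The first two assertions will rest on the joint spectral measure of a minimal unitary extension (of $\VV$ for orthogonality, and of $\VV|\frk{H}_{i_0}^+$ for linear independence); the third will fall out of the previous lemma once linear independence is in hand.

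For orthogonality of $\frk{H}_i$ and $\frk{H}_j$ with $i\neq j$, I would pass to the minimal unitary extension $\widetilde{\VV}$ of $\VV$ and its joint spectral measure $E$, which is supported on $Z(\Ann(\VV))\cap\TT^n=\bigcup_{\ell=1}^m (Z(\mc{I}_\ell)\cap\TT^n)$. For $f,g\in\frk{H}$, $p\in\widehat{\mc{I}}_i$ and $q\in\widehat{\mc{I}}_j$, the spectral theorem gives $\langle p(\VV)f,q(\VV)g\rangle=\int \cc{q(z)}p(z)\,d\langle E(z)f,g\rangle$. On each piece $Z(\mc{I}_\ell)\cap\TT^n$ of $\mathrm{supp}\,E$, $p$ vanishes when $\ell\neq i$ and $q$ vanishes when $\ell\neq j$; since $i\neq j$, at least one of these conditions holds for every $\ell$, so $\cc{q}p\equiv 0$ on $\mathrm{supp}\,E$ and the inner product vanishes. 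Closing under spans yields $\frk{H}_i\perp\frk{H}_j$.

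For linear independence, suppose $\sum_{i=1}^m f_i=0$ with $f_i\in\frk{H}_i^+$ and fix $i_0$. Reducedness of the decomposition $\Ann(\VV)=\bigcap_i\mc{I}_i$ forces $\mc{I}_j\not\sbse\mc{I}_{i_0}$ for each $j\neq i_0$ (otherwise $\mc{I}_{i_0}$ could be omitted, contradicting $\Ann(\VV)\subsetneq\widehat{\mc{I}}_{i_0}$), so I may pick $p_j\in\mc{I}_j\bksl\mc{I}_{i_0}$ and form $q=\prod_{j\neq i_0}p_j\in\widehat{\mc{I}}_{i_0}\bksl\mc{I}_{i_0}$, the latter by primality of $\mc{I}_{i_0}$. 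Since $p_j(\VV)f_j=0$ for each $j\neq i_0$, applying $q(\VV)$ to $\sum_i f_i=0$ gives $q(\VV)f_{i_0}=0$. I then argue that $q(\VV|\frk{H}_{i_0}^+)$ is injective: each $V_j|\frk{H}_{i_0}^+$ is a shift because $\bigcap_k V_j^k\frk{H}_{i_0}^+\sbse\bigcap_k V_j^k\frk{H}=\{0\}$, so $\VV|\frk{H}_{i_0}^+$ has no joint eigenvectors, and a minimality argument (sketched below) then forces the joint spectral measure $E'$ of the minimal unitary extension of $\VV|\frk{H}_{i_0}^+$ to be atomless. Since $q\notin\mc{I}_{i_0}$ and $Z(\mc{I}_{i_0})$ is irreducible of dimension one, $Z(q)\cap Z(\mc{I}_{i_0})$ is finite; atomlessness then gives $E'(Z(q)\cap\mathrm{supp}\,E')=0$, making $q$ applied to the extension---and thus $q(\VV|\frk{H}_{i_0}^+)$---injective. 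Hence $f_{i_0}=0$.

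For finite codimension, the previous lemma gives $\codim_\frk{H}\sum_j\frk{H}_j<\infty$ and $\codim_\frk{H}\sum_j\frk{H}_j^+<\infty$, so $\sum_j\frk{H}_j$ has finite codimension in $\sum_j\frk{H}_j^+$. Orthogonality makes $\sum_j\frk{H}_j=\bigoplus_j\frk{H}_j$ an orthogonal sum and linear independence makes $\sum_j\frk{H}_j^+=\bigoplus_j\frk{H}_j^+$ an algebraic direct sum, so the quotient is naturally identified with $\bigoplus_j(\frk{H}_j^+/\frk{H}_j)$, forcing each summand to be finite-dimensional. The main obstacle I anticipate is justifying atomlessness of $E'$: if $v$ were a joint eigenvector for the extension with eigenvalue $\lambda\in\TT^n$, then $P_{\frk{H}_{i_0}^+}v$ would be a joint eigenvector of $\VV|\frk{H}_{i_0}^+$, hence zero; minimality then expresses the enlarged space as $\bigvee_\grb \widetilde{\VV'}^{*\grb}\frk{H}_{i_0}^+$ where $\VV'=\VV|\frk{H}_{i_0}^+$, and for $h\in\frk{H}_{i_0}^+$ one has $\langle v,\widetilde{\VV'}^{*\grb}h\rangle=\lambda^\grb\langle v,h\rangle=0$, so $v=0$.
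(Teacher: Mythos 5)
Your overall plan is sound, and the finite--codimension step is a clean piece of linear algebra that avoids the paper's route. The paper instead handles this point by proving the stronger statement $\frk{H}_i^+\perp\frk{H}_j$ for $j\neq i$ (via the map $\iota$), deducing $\frk{H}_i^+\ominus\frk{H}_i\subseteq\bigl(\sum_\ell\frk{H}_\ell\bigr)^\perp$. Your quotient argument, identifying $\bigl(\sum_j\frk{H}_j^+\bigr)/\bigl(\sum_j\frk{H}_j\bigr)$ with $\bigoplus_j\frk{H}_j^+/\frk{H}_j$, is a correct and arguably more elementary alternative once linear independence is in hand.

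Your orthogonality argument is also correct and takes a genuinely different route. The paper works algebraically with the involution $\iota(p)(z)=z^\delta\,\overline{p(1/\overline z)}$, exploiting $\iota(p)(\VV)=p(\VV)^*\VV^\delta$ to convert adjoints into polynomials; you instead pass to the joint spectral measure $E$ of the minimal unitary extension and use that $\overline q\,p$ vanishes identically on $\operatorname{supp}E\subseteq\bigcup_\ell Z(\mathcal I_\ell)\cap\TT^n$. Both reach the same conclusion; yours trades polynomial bookkeeping for measure-theoretic support reasoning.

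Your linear independence argument is where the two approaches diverge most, and where you have a genuine (though repairable) error. You reduce to showing $q(\VV|\frk{H}_{i_0}^+)$ is injective for $q\in\widehat{\mathcal I}_{i_0}\setminus\mathcal I_{i_0}$, via atomlessness of the spectral measure of the minimal unitary extension of $\VV'=\VV|\frk{H}_{i_0}^+$. But in your sketch of atomlessness you claim that if $\widetilde{\VV'}v=\lambda v$ with $\lambda\in\TT^n$, then $P_{\frk{H}_{i_0}^+}v$ is a joint eigenvector of $\VV'$. This is not so. For $h\in\frk{H}_{i_0}^+$ one gets $\langle (V_j')^*Pv,h\rangle = \langle v, V_j'h\rangle = \langle\widetilde{V_j'}^*v,h\rangle = \bar\lambda_j\langle Pv,h\rangle$, so $Pv$ is a joint eigenvector of the \emph{adjoint} $(\VV')^*$ with unimodular eigenvalue $\bar\lambda$ --- not of $\VV'$ itself. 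The conclusion $Pv=0$ is still correct, but for a different reason: the adjoint of a shift has no eigenvalues of modulus one (if $S^*f=\mu f$ with $|\mu|=1$ and $S$ isometric, then $f\in\operatorname{ran}S$ and one deduces $f=0$). Once this is fixed, the remainder of your atomlessness and injectivity argument goes through. It is worth noting, though, that the paper's argument for linear independence is substantially shorter and bypasses spectral measures entirely: if $f\in\frk{H}_i^+\cap\sum_{j\neq i}\frk{H}_j^+$ then $p(\VV)f=0$ for every $p\in\mathcal I_i+\widehat{\mathcal I}_i$; since $Z(\mathcal I_i+\widehat{\mathcal I}_i)$ is zero-dimensional, this ideal contains a nonzero single-variable polynomial $p_0(x_1)$, and the injectivity of $p_0(V_1)$ gives $f=0$ at once.
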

  \begin{proof}
  Let $p_i\in\widehat{\mc{I}}_i$ and let $p_j\in\widehat{\mc{I}}_j$ for distinct $i,j$ in $\{1,\dots,m\}$. Because $\Ann(\VV|\frk{H}_j)=\mc{I}_j$, it follows that $\iota(p_j)\in\widehat{\mc{I}}_j$ and thus that $\iota(p_j)\cdot p_i\in\Ann(\VV)$. In other words, for $h,h'\in\frk{H}$ and $\grd$ the multi-degree of $p_i$,
  \[ \inner{p_i(\VV)h}{p_j(\VV)h'}=\inner{\VV^\grd}{p_i(\VV)^*\VV^\grd p_j(\VV)h'}= \inner{\VV^\grd}{(\iota(p_i) p_j)(\VV)h'}=0.\]
  That is, $\frk{H}_i$ and $\frk{H}_j$ are orthogonal.
  
  If $f \in \mathfrak{H}_i^+ \cap \sum_{j \neq i}
  \mathfrak{H}_j^+$, then $p ( \mathbb{V}) f = 0$ for each $p \in
  \mathcal{I}_i + \hat{\mathcal{I}}_i$. However, $Z ( \mathcal{I}_i +
  \hat{\mathcal{I}}_j) = \bigcup_{j \neq i} Z ( \mathcal{I}_i) \cap Z (
  \mathcal{I}_j)$ has dimension 0, and thus $\mathcal{I}_i +
  \widehat{\mathcal{I}}_i$ contains a non-zero single-variable polynomial $p_0 ( x_1)$. Because $p_0 ( V_1)$ is injective, we have that $f = 0$.
  
  Let $p\in \widehat{\mc{I}}_j$ for some $j\neq i$ and denote by $\grd$ the multi-degree of $p$. Because $\Ann(\VV|\frk{H}_\ell)=\mc{I}_\ell$ for each $\ell$, we know that $\iota(p)\in \widehat{\mc{I}}_j$. Thus, for $f\in\frk{H}_i^+$ and $g\in\frk{H}$,
   \[ \inner{f}{p(\VV)g}=\inner{\VV^{*\grd}p(\VV)^*\VV^\grd f}{g}=\inner{\iota(p)(\VV) f}{\VV^{\grd}g}=0. \]
   Thus $\frk{H}_i^+\bot \frk{H}_j$ for $j\neq i$, whence $\frk{H}_i^+\ominus \frk{H}_i\sbse \left(\sum_{\ell=1}^m\frk{H}_\ell\right)^\bot$. Because $\sum_{\ell=1}^m\frk{H}_\ell$ has finite codimension in $\frk{H}$, we have that $\frk{H}_i$ has finite codimension in $\frk{H}_i^+$. 
  \end{proof}

  It follows from the preceding lemma that $\mathbb{V}|\mathfrak{H}_i \approx \mathbb{V}|\mathfrak{H}_i^+$ and $\kappa(\mathbb{V}|\mathfrak{H}_i) = \kappa ( \mathbb{V}|\mathfrak{H}_i^+)$ for $i=1,\dots,m$. We also note that if $\mathfrak{H}'$ is a $\mathbb{V}$-invariant finite
  codimensional subspace of $\mathfrak{H}$, then the subspace $\mathfrak{H}_i' = \mathrm{clos} \{ p ( \mathbb{V}) f : p \in \widehat{\mathcal{I}}_i, f \in \mathfrak{H}' \}$ has finite codimension in $\mathfrak{H}_i$. Because of this, $\mathcal{I}_i = \mathrm{Ann} ( \mathbb{V}|\mathfrak{H}_i')$ and $\kappa (
  \mathbb{V}|\mathfrak{H}_i^+) = \kappa ( \mathbb{V}|\mathfrak{H}_i')$. More generally, we have the following.
  
  \begin{lemma}\label{IdlCodimLem}
    Let $\mc{I}$ be an ideal of $\CC[x_1,\dots,x_n]$. The closure $\frk{H}_\mc{I}'$ of $\{p(\VV)h:h\in\frk{H}',p\in\mc{I}\}$ has finite codimension in the closure $\frk{H}_\mc{I}$ of $\{p(\VV)h:h\in\frk{H},p\in\mc{I}\}$. Moreover, $\grk(\VV|\frk{H}_\mc{I}')=\grk(\VV|\frk{H}_\mc{I})$.
  \end{lemma}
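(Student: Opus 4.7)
The plan is to reduce everything to a finite generating set for $\mc{I}$ and track what happens to closures under finite-dimensional perturbations.

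First, by the Hilbert basis theorem, $\mc{I}$ is finitely generated; fix generators $q_1,\dots,q_r\in\mc{I}$. Then
\[ \frk{H}_\mc{I}=\clos\bigl(q_1(\VV)\frk{H}+\cdots+q_r(\VV)\frk{H}\bigr)\qtextq{and} \frk{H}_\mc{I}'=\clos\bigl(q_1(\VV)\frk{H}'+\cdots+q_r(\VV)\frk{H}'\bigr), \]
because any $p\in\mc{I}$ can be written as $p=\sum_j f_j q_j$ with $f_j\in\CC[x_1,\dots,x_n]$, and then $p(\VV)h=\sum_j q_j(\VV)f_j(\VV)h$ lies in $\sum_j q_j(\VV)\frk{H}$ (using $\VV$-invariance of $\frk{H}$, and similarly of $\frk{H}'$).

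Next, since $\frk{H}'$ is closed and has finite codimension in $\frk{H}$, pick a finite-dimensional algebraic complement $F$ so that $\frk{H}=\frk{H}'+F$. Then
\[ q_j(\VV)\frk{H}\sbse q_j(\VV)\frk{H}'+q_j(\VV)F, \]
so the algebraic sum $\sum_j q_j(\VV)\frk{H}$ is contained in $\sum_j q_j(\VV)\frk{H}'+F_0$ where $F_0=\sum_j q_j(\VV)F$ is finite-dimensional (hence closed). Using the standard fact that $\clos(A+B)=\clos(A)+B$ whenever $B$ is finite-dimensional (the sum of a closed subspace and a finite-dimensional subspace of a Hilbert space is closed), we obtain $\frk{H}_\mc{I}\sbse\frk{H}_\mc{I}'+F_0$, while obviously $\frk{H}_\mc{I}'\sbse\frk{H}_\mc{I}$. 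This proves that $\frk{H}_\mc{I}'$ has finite codimension in $\frk{H}_\mc{I}$.

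For the moreover clause, note that both $\frk{H}_\mc{I}$ and $\frk{H}_\mc{I}'$ are $\VV$-invariant: if $p\in\mc{I}$ then $x_kp\in\mc{I}$ for every $k$, so $V_k(p(\VV)\frk{H})=(x_kp)(\VV)\frk{H}$ lies in the defining set for $\frk{H}_\mc{I}$, and likewise for $\frk{H}_\mc{I}'$. The remark made just before Section~4.1, namely that $\grk(\VV|\frk{L})=\grk(\VV|\frk{K})$ whenever $\frk{K}$ is a $\VV$-invariant subspace of finite codimension in $\frk{L}$, applies directly to give $\grk(\VV|\frk{H}_\mc{I}')=\grk(\VV|\frk{H}_\mc{I})$. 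The only subtle point in the whole argument is the interchange of sums and closures, and this is handled by the finite-dimensionality of $F_0$; no deeper obstacle arises.
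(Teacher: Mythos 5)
Your proof is correct and follows essentially the same approach as the paper: use Hilbert's basis theorem to reduce to finitely many generators $q_1,\dots,q_r$, write $\frk{H}_{\mc{I}}$ and $\frk{H}_{\mc{I}}'$ as closed spans of $q_j(\VV)\frk{H}$ and $q_j(\VV)\frk{H}'$, and observe that a finite-dimensional complement of $\frk{H}'$ in $\frk{H}$ contributes only a finite-dimensional piece. You are in fact a bit more explicit than the paper, which states the final decomposition without justifying the closure exchange or the $\VV$-invariance needed for the cyclicity claim; your added remarks on those points are exactly the right ones.
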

  \begin{proof}
    Ideals of $\CC[x_1,\dots,x_n]$ are finitely generated, and thus there are polynomials $p_1,\dots,p_\ell\in\mc{I}$ such that $\mc{I}=\sum_{j=1}^\ell \CC[x_1,\dots,x_n]\cdot p_j$. Because of this, $\frk{H}_\mc{I}'=\bigvee_{j=1}^\ell p_j(\VV)\frk{H}'$, with a similar expression for $\frk{H}_\mc{I}$. Thus, if $f_1,\dots,f_m$ form a basis for $\frk{H}\ominus\frk{H}'$, then
    \[ \frk{H}_\mc{I}=\frk{H}_\mc{I}'+\sum_{j=1}^\ell\sum_{i=1}^m \CC p_j(\VV)f_i. \qedhere \]
  \end{proof}
  
  Recall from Corollary \ref{VS:ApproxEqAnn} that virtually similar $n$-tuples have the same annihilator. In particular, virtually similar tuples have annihilators with the same prime factors.
  
\begin{theorem}\label{GenThm}
  Let $\VV$ and $\WW$ be $n$-tuples of commuting shifts of finite multiplicity on Hilbert spaces $\frk{H}$ and $\frk{K}$, respectively. We denote by $\mc{I}_1,\dots,\mc{I}_m$ the prime factors of $\Ann(\VV)$, and set
  \[ \frk{H}_i^+=\bigcap_{p\in\mc{I}_i}\ker p(\VV), \quad \frk{K}_i^+=\bigcap_{p\in\mc{I}_i}p(\WW), \quad
  i=1,\dots,m.\]
  The following assertions are equivalent.
  \begin{enumerate}
    \item[\rm{(i)}] $\mathbb{V} \approx \mathbb{W}$
    \item[\rm{(ii)}] $\mathrm{Ann} ( \mathbb{V}) = \mathrm{Ann} ( \mathbb{W})$ and $\kappa (
  \mathbb{V}|\mathfrak{H}_j^+) = \kappa ( \mathbb{V}|\mathfrak{K}_j^+)$ for $j=1,\dots,m$.
  \end{enumerate}
\end{theorem}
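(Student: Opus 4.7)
My plan is to handle the two implications separately, reducing each to Theorem \ref{MainThm} applied to the individual primary components $\frk{H}_j$ and $\frk{K}_j$. For the forward direction, assume $\VV\approx\WW$. Corollary \ref{VS:ApproxEqAnn} gives $\Ann(\VV)=\Ann(\WW)$, so the prime factors $\mc{I}_1,\ldots,\mc{I}_m$ agree for the two annihilators. I would then pick a finite codimensional $\WW$-invariant subspace $\frk{K}''\sbse\frk{K}$ and a boundedly invertible $S:\frk{H}\to\frk{K}''$ intertwining $\VV$ with $\WW|\frk{K}''$. Because $Sp(\VV)=p(\WW)S$ for every polynomial $p$ and $S$ is a linear homeomorphism, $S$ maps $\frk{H}_j$ bijectively onto $\frk{K}_j'=\mathrm{clos}\{p(\WW)h:p\in\widehat{\mc{I}}_j,\,h\in\frk{K}''\}$, establishing a similarity between $\VV|\frk{H}_j$ and $\WW|\frk{K}_j'$. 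Lemma \ref{IdlCodimLem} applied with $\mc{I}=\widehat{\mc{I}}_j$ ensures that $\frk{K}_j'$ has finite codimension in $\frk{K}_j$, so $\grk(\WW|\frk{K}_j')=\grk(\WW|\frk{K}_j)$. Combined with $\grk(\VV|\frk{H}_j)=\grk(\VV|\frk{H}_j^+)$ and the symmetric identity for $\WW$, both consequences of the finite codimensional containments established in the preceding lemma, this yields $\grk(\VV|\frk{H}_j^+)=\grk(\WW|\frk{K}_j^+)$.

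For the reverse direction, assume $\Ann(\VV)=\Ann(\WW)$ and $\grk(\VV|\frk{H}_j^+)=\grk(\WW|\frk{K}_j^+)$ for each $j$. Each $\VV|\frk{H}_j$ is an $n$-tuple of commuting shifts of finite multiplicity whose annihilator is the prime ideal $\mc{I}_j$, and the same is true of $\WW|\frk{K}_j$; by the $\grk$-identities above, $\grk(\VV|\frk{H}_j)=\grk(\WW|\frk{K}_j)$. Theorem \ref{MainThm} therefore yields $\VV|\frk{H}_j\approx\WW|\frk{K}_j$, producing a $\WW|\frk{K}_j$-invariant subspace $\frk{K}_j'\sbse\frk{K}_j$ of finite codimension and a boundedly invertible $T_j:\frk{H}_j\to\frk{K}_j'$ that intertwines $\VV|\frk{H}_j$ with $\WW|\frk{K}_j'$. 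Since the subspaces $\frk{H}_1,\dots,\frk{H}_m$ are pairwise orthogonal (and likewise for $\frk{K}_1,\dots,\frk{K}_m$), and both $\sum_j\frk{H}_j$ and $\sum_j\frk{K}_j$ have finite codimension in $\frk{H}$ and $\frk{K}$ respectively, the direct sum $T=\bigoplus_j T_j$ is a bounded invertible intertwiner from $\bigoplus_j\frk{H}_j$ onto $\bigoplus_j\frk{K}_j'$, and the latter has finite codimension in $\frk{K}$. This shows $\VV|\bigoplus_j\frk{H}_j\lesssim\WW$; combined with $\VV\approx\VV|\bigoplus_j\frk{H}_j$ (finite codimension) and the transitivity of $\lesssim$ (immediate by composing similarities and restricting to the image of the intermediate invariant subspace), I obtain $\VV\lesssim\WW$. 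The symmetric argument gives $\WW\lesssim\VV$, completing the proof.

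The step I expect to require the most care is the bookkeeping in the forward direction: verifying that $S$ carries $\frk{H}_j$ onto the \emph{truncated} primary subspace $\frk{K}_j'$ associated to $\frk{K}''$ rather than onto $\frk{K}_j$ itself, and then closing the gap via Lemma \ref{IdlCodimLem}. The reverse direction is essentially a direct-sum assembly of Theorem \ref{MainThm}, with the orthogonal decomposition furnished by the preceding lemma guaranteeing that the assembly is well behaved.
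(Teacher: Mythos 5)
Your proposal is correct and follows essentially the same route as the paper's own proof: in one direction you pass through Corollary \ref{VS:ApproxEqAnn}, transport the primary components $\frk{H}_j$ via the intertwiner $S$, and close the finite-codimension gap with Lemma \ref{IdlCodimLem}; in the other you apply Theorem \ref{MainThm} component-wise and reassemble by orthogonal direct sum, using that $\sum_j\frk{H}_j$ has finite codimension. The paper phrases the reassembly more compactly via the chain $\VV\approx\bigoplus_j\VV|\frk{H}_j\approx\bigoplus_j\WW|\frk{K}_j\approx\WW$ and an appeal to Lemma \ref{LessSUFEq}, but the content is the same as your explicit direct-sum argument.
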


\begin{proof}
  Assume (ii). As above, we set $\frk{H}_j=\mathrm{clos}\{p(\VV)h:p\in\widehat{\mc{I}}_j,\: h\in\frk{H}\}$ and $\frk{K}_j=\mathrm{clos}\{p(\WW)g:p\in\widehat{\mc{I}}_j,\; g\in\frk{K}\}$ for $j=1,\dots,m$. Because $\sum_{j=1}^m\frk{H}_j$ and $\sum_{j=1}^m\frk{K}_j$ have finite codimension in $\frk{H}$ and $\frk{K}$, respectively, we deduce from Proposition \ref{LessSUFEq} that 
  \[ \VV \approx \bigoplus_{j=1}^m \VV|\frk{H}_j, \quad \WW\approx \bigoplus_{j=1}^m\WW|\frk{K}_j. \]
  As $\Ann(\VV|\frk{H}_i)=\mc{I}_i=\Ann(\WW|\frk{K}_i)$ and $\grk(\VV|\frk{H}_i)=\grk(\WW|\frk{K}_i)$, it follows from Theorem \ref{MainThm} that $\VV|\frk{H}_i\approx \WW|\frk{K}_i$ for $i=1,\dots,m$. Thus we have that
  \[ \VV \gtrsim \bigoplus_{j=1}^m \VV|\frk{H}_j \gtrsim \bigoplus_{j=1}^m\WW|\frk{K}_j\gtrsim \WW. \]
  Likewise, we find that $\WW\gtrsim\VV$.
  
  Conversely, we assume (i) and note that $\Ann(\VV)=\Ann(\WW)$ by Corollary \ref{VS:ApproxEqAnn}. Let $\mathfrak{K}'$ be a finite codimensional
  $\mathbb{W}$-invariant subspace of $\mathfrak{K}$ and let $S\in\mc{B}(\mathfrak{H},\mathfrak{K}')$ be a boundedly invertible operator such that
  $SV_j=W_jS$ for $j=1,\dots,n$. It follows from $\dim(\frk{K}\ominus\frk{K}')<\infty$ and Lemma \ref{LessSUFEq} that each element of $\WW|\frk{K}'$ is a shift of finite multiplicity. We set $\mathfrak{K}_i' = \mathrm{clos} \{ p ( \mathbb{W}) f : p \in \widehat{\mathcal{I}}_i, f \in \mathfrak{K}' \}$ for $i=1,\dots,m$ and note that $\frk{K}_i'$ has finite codimension in $\frk{K}_i$ by Lemma \ref{IdlCodimLem}. Thus $\frk{K}_i'$ has finite codimension in $\frk{K}_i^+$ as well. Because $S^{- 1} \mathfrak{K}_i' =\mathfrak{H}_i$, we conclude that $\kappa ( \mathbb{W}|\mathfrak{K}_i^+) = \kappa (
     \mathbb{W}|\mathfrak{K}_i') = \kappa ( \mathbb{V}|\mathfrak{H}_i) =
     \kappa ( \mathbb{V}|\mathfrak{H}_i^+)$. 
\end{proof}

Before ending this section, we remark that assertion (i) of Theorem \ref{GenThm} can be weakened as follows. Let $\VV$ and $\WW$ be $n$-tuples of commuting shifts of finite multiplicity on Hilbert spaces  $\frk{H}$ and $\frk{K}$, respectively. Suppose there are injective linear maps $X\in\mc{B}(\frk{H},\frk{K})$ and $Y\in\mc{B}(\frk{K},\frk{H})$ such that $\dim (\ran X)^\bot<\infty$, $\dim(\ran Y)^\bot<\infty$, and
\[ XV_i=W_iX, \quad YW_i=V_iX \quad\text{for }i=1,\dots,n. \]
Then we say that $\VV$ and $\WW$ are \emph{virtually quasi-similar}. It is evident that virtual similarity implies virtual quasi-similarity. The converse is also true.

\begin{proposition}
  If $\VV$ and $\WW$ are virtually quasi-similar $n$-tuples of commuting shifts of finite multiplicity, then $\VV\approx \WW$.
\end{proposition}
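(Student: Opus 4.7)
The plan is to verify condition (ii) of Theorem \ref{GenThm}: that $\Ann(\VV)=\Ann(\WW)$ and $\grk(\VV|\frk{H}_j^+)=\grk(\WW|\frk{K}_j^+)$ for each prime factor $\mc{I}_j$ of $\Ann(\VV)$. Both facts will be deduced directly from the injectivity of $X$ and $Y$ together with the finite-codimension condition on their ranges; no further structural results are required.

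For the equality of annihilators, suppose $p\in\Ann(\VV)$. The intertwining $YW_i=V_iY$ gives $Yp(\WW)=p(\VV)Y=0$, and by injectivity of $Y$ we obtain $p(\WW)=0$, so $p\in\Ann(\WW)$. The symmetric argument using $X$ yields the reverse inclusion, so $\Ann(\VV)=\Ann(\WW)$. In particular, $\VV$ and $\WW$ share the same prime factors $\mc{I}_1,\dots,\mc{I}_m$, and the corresponding subspaces $\frk{H}_j,\frk{H}_j^+$ and $\frk{K}_j,\frk{K}_j^+$ are defined as in Section 4.2.

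For the cyclicity condition, the strategy is to show that $X$ restricts to a similarity between $\VV|\frk{H}_j$ and the restriction of $\WW$ to a finite-codimensional $\WW$-invariant subspace of $\frk{K}_j$. Since $\ran X$ is finite codimensional in $\frk{K}$ it is closed, and the open mapping theorem makes $X$ a topological isomorphism from $\frk{H}$ onto $\ran X$; in particular $X\frk{H}_j$ is closed in $\frk{K}$, and it is $\WW$-invariant because $W_iX\frk{H}_j=XV_i\frk{H}_j\sbse X\frk{H}_j$. For any $p\in\widehat{\mc{I}}_j$ and $h\in\frk{H}$ the identity $Xp(\VV)h=p(\WW)Xh$ shows $X\frk{H}_j\sbse \frk{K}_j$ and that $X\frk{H}_j$ contains the closure of $\{p(\WW)g:p\in\widehat{\mc{I}}_j,\ g\in\ran X\}$. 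Applying Lemma \ref{IdlCodimLem} with $\mc{I}=\widehat{\mc{I}}_j$ and the finite-codimensional subspace $\ran X$ of $\frk{K}$, this closure has finite codimension in $\frk{K}_j$, and hence so does $X\frk{H}_j$. Because $X|\frk{H}_j$ is a boundedly invertible intertwiner between $\VV|\frk{H}_j$ and $\WW|X\frk{H}_j$, we conclude $\VV|\frk{H}_j\lesssim\WW|\frk{K}_j$. Lemma \ref{KappaLem} then yields $\grk(\VV|\frk{H}_j)=\grk(\WW|\frk{K}_j)$, and since $\frk{H}_j$ has finite codimension in $\frk{H}_j^+$ (and similarly for $\WW$), the observation that $\grk$ is preserved under passing to finite codimensional invariant subspaces gives $\grk(\VV|\frk{H}_j^+)=\grk(\WW|\frk{K}_j^+)$.

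With both hypotheses of Theorem \ref{GenThm}(ii) in hand, we conclude $\VV\approx\WW$. The main technical point is the finite-codimension claim for $X\frk{H}_j$ inside $\frk{K}_j$: the containment $X\frk{H}_j\sbse \frk{K}_j$ is straightforward from the intertwining, and closedness follows from the open mapping theorem, but the finite codimension genuinely requires Lemma \ref{IdlCodimLem} applied to the ideal-generated subspace construction. Once that step is in place, everything else reduces to invoking the lemmas and theorems already established in Section 4.
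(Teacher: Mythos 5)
The annihilator argument and the reduction to Theorem \ref{GenThm} via the subspaces $\frk{H}_j$, $\frk{K}_j$ match the paper. However, the step ``since $\ran X$ is finite codimensional in $\frk{K}$ it is closed, and the open mapping theorem makes $X$ a topological isomorphism'' is a genuine gap. The hypothesis is only that $\dim(\ran X)^\bot<\infty$, i.e.\ that the \emph{closure} of $\ran X$ has finite codimension; this does not force $\ran X$ itself to be closed. (Already in the scalar case $\frk{H}=\frk{K}=H^2(\DD)$, $V=W=M_\grz$, taking $X=M_\phi$ for a non-invertible outer $\phi\in H^\infty(\DD)$ gives an injective intertwiner with dense but non-closed range, so $(\ran X)^\bot=\{0\}$ while $\ran X\neq\frk{K}$.) Consequently you cannot conclude that $X\frk{H}_j$ is closed, that $X|\frk{H}_j$ is boundedly invertible onto its image, or that $\VV|\frk{H}_j\lesssim\WW|\frk{K}_j$; the appeal to Lemma \ref{KappaLem} therefore does not go through.

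The paper's argument avoids manufacturing a similarity. Instead it applies Lemma \ref{IdlCodimLem} (as you do) to get that $\overline{X\frk{H}_j}$ has finite codimension in $\frk{K}_j$, and then compares virtual cyclicities directly by pushing generators through $X$: if $f_1,\dots,f_k$ generate a finite-codimensional $\VV$-invariant subspace of $\frk{H}_j$, then $\bigvee_{i,\grb}\WW^\grb Xf_i$ is the closure of $X$ applied to that subspace and hence has finite codimension in $\frk{K}_j$, giving $\grk(\WW|\frk{K}_j)\le\grk(\VV|\frk{H}_j)$; the symmetric argument with $Y$ gives the reverse inequality. This uses only continuity and injectivity of $X$ and $Y$, not closedness of their ranges. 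Your proof could be repaired by replacing the ``similarity via open mapping theorem'' step with this direct comparison of generators.
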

\begin{proof}
  Let $X$ and $Y$ be as above. If $p\in\Ann(\WW)$, then $0=p(\WW)X=Xp(\VV)$. As $X$ is injective, it follows that $p\in\Ann(\VV)$. In a similar fashion, we see that $\Ann(\VV)\sbse \Ann(\WW)$ as well.

  Let $\mc{I}_1,\dots,\mc{I}_m$ be the prime factors of $\Ann(\VV)$, and set
  \[ \frk{H}_i=\bigvee_{p\in\widehat{\mc{I}}_i}p(\VV)\frk{H}, \quad \frk{K}_i=\bigvee_{p\in\widehat{\mc{I}}_i}p(\WW)\frk{K} \]
  for $i=1,\dots,n$. Plainly $X\frk{H}_i\sbse \frk{K}_i$ and $Y\frk{K}_i\sbse \frk{H}_i$.    By Lemma \ref{IdlCodimLem}, it follows that $\dim(\frk{K}_i\ominus X\frk{H}_i)<\infty$ and $\dim(\frk{H}_i\ominus Y\frk{K}_i)<\infty$.   
  
  With $k=\grk(\VV|\frk{H}_i)$, let $f_1,\dots,f_k\in\frk{H}_i$ such that $\bigvee_{j=1}^k\bigvee_{\grb\in\NN^n_0}\VV^\grb f_j$ has finite codimension in $\frk{H}_i$. This implies that $\bigvee_{j=1}^k\bigvee_{\grb\in\NN^n_0}\WW^\grb Xf_j$ has finite codimension in $\frk{K}_i$, and therefore $\grk(\VV|\frk{H}_i)=k\geq \grk(\WW|\frk{K}_i)$. By a similar argument, we conclude that $\grk(\VV|\frk{H}_i)\leq \grk(\WW|\frk{K}_i)$ as well. The proposition now follows from Theorem \ref{GenThm}.
\end{proof}
\section{Virtual Unitary Equivalence}

We say that two $n$-tuples of commuting shifts are \emph{virtually unitarily equivalent} if each tuple is unitarily equivalent to a finite codimensional restriction of the other. Fix an $n$-tuple $\VV$ of commuting shifts of finite multiplicity for which $\mathrm{Ann} ( \mathbb{V})$ is prime and set $k=\grk(\VV)$.

\begin{quotation}\noindent
  Question A : If $\mathbb{W}$ is a virtually $k$-cyclic $n$-tuple of commuting shifts of finite multiplicity and $\mathrm{Ann} ( \mathbb{V}) = \mathrm{Ann} ( \mathbb{W})$, is $\mathbb{W}$ virtually unitarily equivalent to $\mathbb{V}$?
\end{quotation}

\noindent For $k=1$, the answer to Question A is `yes', as shown for $n=2$ in \cite{AKM} and $n>2$ in \cite{Timko}. The question remains open in general for $k>1$. We compare Question A with the following related question, where $G$ and $R$ are as in Section \ref{Sec:SC}. 
\begin{quotation}\noindent
  Question B : Given $\alpha \in \Hom ( G, \mathrm{U} ( \mathbb{C}^k))$, does
  there exist a $k\times k$ matrix-valued inner function $\Psi$ on
  $\mathbb{D}$ such that $\Psi H_{\alpha}^2 ( \mathbb{D}, \mathbb{C}^k)$ is
  a finite codimensional subspace of $H^2 ( \mathbb{D}, \mathbb{C}^k)^G$?
\end{quotation}

\noindent By Theorem \ref{AnDInvarSbspc}, every $H^{\infty}(\DD)^G$-invariant subspace of $H^2 ( \mathbb{D}, \mathbb{C}^k)^G$ is of the
form $\Psi H^2_{\alpha} ( \mathbb{D}, \mathbb{C}^k)$ for \emph{some}
representation $\alpha$. If the answer to (B) is `yes', then such a
subspace would exist for \emph{each} representation $\alpha$. 

\begin{proposition}\label{EquivQProp}
	The answer to Question $\mathrm{A}$ is affirmative if and only if the answer to Question $\mathrm{B}$ is affirmative.
\end{proposition}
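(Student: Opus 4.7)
The strategy is to introduce the reference tuple $\mathbb{V}_0 := \mathbb{M}_\eta$ acting on $H^2(\mathbb{D},\mathbb{C}^k)^G$, which is itself a virtually $k$-cyclic $n$-tuple of commuting shifts of finite multiplicity with $\mathrm{Ann}(\mathbb{V}_0) = \mathrm{Ann}(\mathbb{V})$, and to interpret Question B via Theorem \ref{AnDInvarSbspc} as asking, for each representation $\alpha$, whether the associated pure $H^\infty(\mathbb{D})^G$-invariant subspace of $L^2(\partial \mathbb{D}, \mathbb{C}^k)^G$ can be realized as a finite codimensional $M_\eta$-invariant subspace of $H^2(\mathbb{D},\mathbb{C}^k)^G$ via an analytic inner multiplier $\Psi$. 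Both implications will be bridged by virtual unitary equivalence, which I denote $\approx_u$ and whose transitivity is routine.

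For Question A implies Question B, fix $\alpha \in \mathrm{Hom}(G, \mathrm{U}(\mathbb{C}^k))$, let $E_\alpha$ be the flat unitary bundle over $R$ associated to $\alpha$ by Theorem \ref{AnDThmB}, and let $\mathbb{W}_\alpha$ denote multiplication by $\xi$ on $H^2(E_\alpha)$. By Theorem \ref{DRIsom} this is unitarily equivalent to $\mathbb{M}_\eta$ on $H^2_\alpha(\mathbb{D},\mathbb{C}^k)$; a routine check shows $\mathbb{W}_\alpha$ is a virtually $k$-cyclic $n$-tuple of commuting shifts of finite multiplicity with $\mathrm{Ann}(\mathbb{W}_\alpha) = \mathrm{Ann}(\mathbb{V})$. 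Applying Question A once with $\mathbb{W} = \mathbb{W}_\alpha$ and once with $\mathbb{W} = \mathbb{V}_0$ gives $\mathbb{W}_\alpha \approx_u \mathbb{V}$ and $\mathbb{V}_0 \approx_u \mathbb{V}$, so by transitivity $\mathbb{W}_\alpha \approx_u \mathbb{V}_0$. In particular there exists an $M_\eta$-invariant finite codimensional subspace $\mathfrak{L}_\alpha \subseteq H^2(\mathbb{D},\mathbb{C}^k)^G$ and an $M_\eta$-intertwining unitary $U : H^2_\alpha(\mathbb{D},\mathbb{C}^k) \to \mathfrak{L}_\alpha$. Because $U$ commutes with each $M_{\eta_i}$ and carries analytic functions to analytic functions, a commutant-plus-boundary-values argument identifies $U$ with $M_\Psi$ for a bounded analytic $k \times k$ matrix-valued $\Psi$ on $\mathbb{D}$: testing $U$ against the elements $\Phi_\alpha e_j \in H^2_\alpha(\mathbb{D},\mathbb{C}^k)$ recovers $\Psi$ as $(U\Phi_\alpha)\Phi_\alpha^{-1}$, which is analytic on $\mathbb{D}$ since $\Phi_\alpha$ is analytic and $\mathrm{GL}(\mathbb{C}^k)$-valued. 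Isometry of $U$ forces $\Psi$ to be a.e. isometric on $\partial\mathbb{D}$, hence inner, and $\Psi H^2_\alpha(\mathbb{D},\mathbb{C}^k) = \mathfrak{L}_\alpha$ has finite codimension in $H^2(\mathbb{D},\mathbb{C}^k)^G$, yielding Question B.

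For the converse, assume Question B. Given $\mathbb{W}$ satisfying the hypotheses of Question A, Lemmas \ref{SubNormLem} and \ref{UniModel} combined with Theorem \ref{AnDInvarSbspc} produce, after pullback to $\mathbb{D}$, a representation $\alpha_{\mathbb{W}}$ and a weakly measurable boundary isometry $\Psi_{\mathbb{W}}$ on $\partial\mathbb{D}$ such that, for some finite codimensional $\mathbb{W}$-invariant subspace $\mathfrak{K}' \subseteq \mathfrak{K}$, the restriction $\mathbb{W}|\mathfrak{K}'$ is unitarily equivalent to $\mathbb{M}_\eta | \Psi_{\mathbb{W}} H^2_{\alpha_{\mathbb{W}}}(\mathbb{D},\mathbb{C}^k)$. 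Question B applied to $\alpha_{\mathbb{W}}$ supplies an inner $\widetilde{\Psi}_{\mathbb{W}}$ with $\widetilde{\Psi}_{\mathbb{W}} H^2_{\alpha_{\mathbb{W}}}(\mathbb{D},\mathbb{C}^k)$ finite codimensional in $H^2(\mathbb{D},\mathbb{C}^k)^G$, and the boundary map $\Psi_{\mathbb{W}} f \mapsto \widetilde{\Psi}_{\mathbb{W}} f$ is an $M_\eta$-intertwining unitary, so $\mathbb{W}|\mathfrak{K}'$ is unitarily equivalent to a finite codimensional restriction of $\mathbb{V}_0$. To upgrade this to $\mathbb{W} \approx_u \mathbb{V}_0$ I invoke the Blaschke trick from Lemma \ref{BlashProp}: for any finite codimensional $\mathbb{V}$-invariant subspace $\mathfrak{H}' \subseteq \mathfrak{H}$, the operator $B(V_n)$ is an isometry commuting with each $V_i$ whose range $B(V_n)\mathfrak{H}$ has finite codimension in $\mathfrak{H}'$, so $\mathbb{V}$ is unitarily equivalent to $\mathbb{V}|B(V_n)\mathfrak{H}$, which is a finite codimensional restriction of $\mathbb{V}|\mathfrak{H}'$; this gives $\mathbb{V} \approx_u \mathbb{V}|\mathfrak{H}'$. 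Applying this trick on both sides of the unitary equivalence between $\mathbb{W}|\mathfrak{K}'$ and the corresponding finite codimensional restriction of $\mathbb{V}_0$ upgrades it to $\mathbb{W} \approx_u \mathbb{V}_0$, and the analogous argument with $\mathbb{V}$ in place of $\mathbb{W}$ gives $\mathbb{V} \approx_u \mathbb{V}_0$; transitivity then yields $\mathbb{W} \approx_u \mathbb{V}$. The principal technical obstacle is precisely this Blaschke upgrade, since without the shift-specific isometric intertwiner $B(V_n)$ one would only recover a unitary match of proper finite codimensional restrictions rather than of the full tuples demanded by virtual unitary equivalence.
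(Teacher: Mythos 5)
Your proposal follows essentially the same route as the paper: introduce the canonical model on $H^2(\DD,\CC^k)^G$ as a pivot, compare all the relevant tuples to it by virtual unitary equivalence and transitivity, and upgrade ``restriction u.e.\ to restriction'' to full virtual unitary equivalence via the Blaschke product of Lemma \ref{BlashProp} (the paper packages this last step as an unstated unitary version of Lemma \ref{LessSUFEq}, but it is the same mechanism). Your explicit naming of $\VV_0$ and the transitivity bookkeeping are a slightly cleaner presentation of the same idea.

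There is, however, one real gap in your A $\Rightarrow$ B direction, at the step where you assert that the intertwining unitary $U : H^2_\alpha(\DD,\CC^k) \to \mathfrak{L}_\alpha$ is a multiplication operator $M_\Psi$. Commuting with $M_{\eta_1},\dots,M_{\eta_n}$ is not, a priori, the same as commuting with all of $H^\infty(\DD)^G$: the $\eta_i$ only generate the proper subalgebra $A_\eta$, which sits with finite codimension in $A(R)$ pulled back. Setting $\Psi := (U\Phi_\alpha)\Phi_\alpha^{-1}$ and checking it on the columns $\Phi_\alpha e_j$ does not yet show $U h = \Psi h$ for general $h \in H^2_\alpha(\DD,\CC^k)$, because for that you need $U$ to commute with multiplication by arbitrary scalar elements of $H^\infty(\DD)^G$, not merely by polynomials in the $\eta_i$. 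The paper closes exactly this gap with Lemma \ref{GetQLem}: there is a polynomial $Q$ with $Q(\eta_n) H^\infty(\DD)^G \subseteq$ the closure of $A_\eta(\DD)$, so $A := U\Phi_\alpha$ commutes with $Q(\eta_n) g$ for every $g \in H^\infty(\DD)^G$, and injectivity of $M_{Q(\eta_n)}$ then forces $A$ to commute with $H^\infty(\DD)^G$ outright, after which the identification $A = M_\Theta$ with $\Theta$ a $G$-invariant analytic matrix function is standard. Without an appeal to Lemma \ref{GetQLem} (or an equivalent device), your ``commutant-plus-boundary-values argument'' is not yet a proof; with it, your argument matches the paper's.
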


\begin{proof}
  By Lemma \ref{UniModel}, there is a $\beta\in\Hom(G,\mathrm{U}(\CC^k))$ such that $\VV$ is virtually similar to $\mathbb{M}_\eta|H^2_\grb(\DD,\CC^k)$. Assume that Question A may be answered in the affirmative and let $\gra\in \Hom(G,\mathrm{U}(\CC^k))$. Denote by $\WW$ and $\mathbb{U}$ the $n$-tuples of commuting shifts of finite multiplicity given by $\mathbb{M}_\eta|H^2_\gra(\DD,\CC^k)$ and $\mathbb{M}_\eta|H^2(\DD,\CC^k)^G$, respectively. These $n$-tuples are virtually similar to $\VV$ and thus, by hypothesis, $\VV$ is virtually unitarily equivalent to both $\WW$ and $\mathbb{U}$. In particular, $\WW$ and $\mathbb{U}$ are virtually unitarily equivalent. Thus there exists a finite codimension $\mathbb{U}$-invariant subspace $\frk{M}$ and a unitary operator $T:H^2_\gra(\DD,\CC^k)\to \frk{M}$ such that $U_jT=TW_j$ for $j=1,\dots,n$.
  
  We claim that $T$ is multiplication by a matrix-valued inner function $\Psi$ with the property that $\Psi\of\grg=\Psi\cdot\gra(\grg)^{-1}$ for each $\grg\in G$. Indeed, set $A:f\mapsto T\Phi_\gra f$, and note that $Ae_1,\dots,Ae_k\in H^2(\DD,\CC^k)^G$, where $e_1,\dots,e_k$ form an orthonormal basis for $\CC^k$. Let $Q$ be the polynomial given by Lemma \ref{GetQLem}, and note that multiplication by $Q(\eta_n)g$ commutes with $A$ for each $g\in H^\infty(\DD)^G$. Thus
  \[ Q(\eta_n)g Ae_\ell=A Q(\eta_n)ge_\ell=Q(\eta_n)Age_\ell, \quad \ell=1,\dots,k. \]
  Because $Q(U_n)$ is injective, we have that $A$ commutes with $H^\infty(\DD)^G$ and thus is a multiplication operator with a $G$-invariant analytic matrix-valued symbol $\grJ$. That is, $T$ is multiplication by $\Psi=\grJ\cdot (\Phi_\gra)^{-1}$. The claim now follows from the fact that $T$ is isometric. Because $\frk{M}=\ran T$ has finite codimension, Question B is answered in the affirmative.
  
  It is easily seen that Lemma \ref{LessSUFEq} remains true when `similarity' is replaced by
  `unitary equivalence'. To answer Question A in the affirmative, it therefore suffices to finite a matrix-valued inner function $\Psi'$ with the property that $\Psi' H^2_\grb(\DD,\CC^k)$ has finite codimension in $H^2(\DD,\CC^k)$. That is, an affirmative answer to Question B provides an affirmative answer to Question A.
\end{proof}

While the answer to either Question A or B is not known in general, we can provide an affirmative answer to Question B if we restrict to representations of $G$ with commutative image. In particular, if $G$ is a commutative group, then Question A has an affirmative answer for any $k$. To demonstrate this, we require some additional results, which are based on \cite[Sec. 4]{AKM} and \cite{Khavinson}.

Fix representative curves $K_1,\dots,K_L$ for the generators of the fundamental group of $R$ with base point $x_0$. By the Hurewicz theorem, these curves also provide a basis for the first singular homology group of $R$ with integer coefficients. Through the isomorphism of $G$ with the fundamental group, there are generators $\grg_1,\dots,\grg_L\in G$ such that the curve $K_j$ is covered by a curve $\DD$ beginning at $0$ and ending at $\grg_j(0)$ for $j=1,\dots,L$.

Denote by $\grw_x$ the harmonic measure for evaluation at $x\in R$, and recall that $\grw=\grw_{x_0}$. By Harnack's inequality, the measures $\grw_x$ and $\grw_{x_0}$ are mutually absolutely continuous. We set
\[ P(x,y)=\frac{d\grw_x(y)}{d\grw_{x_0}(y)}, \quad y\in \pd R, \]
and record a few theorems from \cite{Khavinson} that we require.

\begin{lemma}[{\cite[Lemma 3.1]{Khavinson}} ]\label{Khav1}
	For $j=1,\dots,L$, there exists a continuous function $Y_j$ on $\pd R$ with the following property. If $\mu$ is a finite real measure on $\pd R$ and $u(x)=\int_{\pd R}P(x,y)d\mu(x)$, then the period of the harmonic conjugate $*u$ along the curve $K_j$ is given by $\int_{\pd R}Y_j(y)d\mu(y)$.
\end{lemma}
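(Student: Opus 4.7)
The plan is to identify $Y_j(y)$ explicitly as the period of the conjugate differential of the Poisson kernel $P(\cdot, y)$ along $K_j$, and then to verify continuity using the smoothness of $\pd R$ together with the joint regularity of $P$. For each fixed $y \in \pd R$, the function $x \mapsto P(x, y)$ is positive and harmonic on $R$, so the conjugate differential $*d_x P(\cdot, y)$ is a smooth closed real $1$-form on $R$. Set
\[ Y_j(y) = \int_{K_j} *d_x P(\cdot, y), \]
which is well defined since $*d_x P(\cdot, y)$ is closed and the value depends only on the homology class of $K_j$.

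Given a finite real measure $\mu$ on $\pd R$, the Poisson integral $u(x) = \int_{\pd R} P(x, y) \, d\mu(y)$ is harmonic on $R$, and one may differentiate under the integral sign on compact subsets of $R$ (using uniform bounds on the $x$-derivatives of $P(x, y)$ for $x$ ranging over a compact set and $y \in \pd R$) to obtain the identity of $1$-forms $*du(x) = \int_{\pd R} *d_x P(x, y) \, d\mu(y)$ on $R$. Integrating along $K_j$ and exchanging the order of integration by Fubini then yields $\int_{K_j} *du = \int_{\pd R} Y_j(y) \, d\mu(y)$, which is the desired formula.

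It remains to establish continuity of $Y_j$ on $\pd R$. By perturbing $K_j$ within its homology class (possible since $R$ is a deformation retract of a slightly larger finite Riemann surface $R' \supset \overline{R}$, as noted in Section 3), we may assume $K_j$ is a smooth loop contained in a compact subset $L \subset R$ disjoint from $\pd R$. The main obstacle is verifying that $P$ extends jointly real analytically to $L \times \pd R$: one way is to write $P(x, y) = -\pd G(x, y)/\pd n_y$ in terms of the Green's function $G$ of $R$ and invoke the Schwarz reflection principle across the real analytic boundary components to analytically continue $G(x, \cdot)$ across each component of $\pd R$. Granting this, the map $y \mapsto (*d_x P(\cdot, y))|_L$ is continuous (indeed real analytic) in $y \in \pd R$ in the $C^\infty$-topology on $1$-forms on $L$, and integrating against the fixed loop $K_j$ preserves continuity. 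The linearity and Fubini steps are then routine, and the regularity of the Green's function on $\overline{R} \times \overline{R}$ off the diagonal is the essential analytic input.
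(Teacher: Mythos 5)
The paper does not prove this lemma; it is quoted verbatim as Lemma 3.1 of Khavinson and used as a black box. Your blind reconstruction is nevertheless sound, and it takes the natural approach one would expect.

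Your definition $Y_j(y)=\int_{K_j}*d_xP(\cdot,y)$ is the right object, and the two analytic steps you must justify are exactly the ones you flag: (a) differentiation under the integral sign to pass from $u(x)=\int_{\pd R}P(x,y)\,d\mu(y)$ to $*du=\int_{\pd R}*d_xP(\cdot,y)\,d\mu(y)$ on compact subsets, followed by Fubini over the compact curve $K_j$; and (b) the continuity (indeed real analyticity) of $y\mapsto *d_xP(\cdot,y)$ on a compact set $L\supset K_j$ disjoint from $\pd R$. For (a) it is worth being explicit about where the uniform bounds come from: since $P(\cdot,y)\geq 0$ and $P(x_0,y)=1$ for every $y$, Harnack's inequality gives uniform-in-$y$ sup bounds on compacta, and interior gradient estimates for harmonic functions then give uniform $C^k$ bounds, which is all that differentiation under the integral and Fubini require ($\mu$ is finite, $K_j$ is compact). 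For (b), your route through the Green's function and Schwarz reflection across the real analytic boundary is correct, though slightly heavier than needed; one can also argue directly that $y\mapsto P(\cdot,y)$ is continuous into $L^\infty_{\mathrm{loc}}(R)$ (continuity of harmonic measure in the pole together with the Harnack normalization), which upgrades automatically to continuity in $C^\infty_{\mathrm{loc}}$ by interior estimates, avoiding any explicit boundary regularity statement for $G$. Either way, the argument is complete, and since the paper gives no proof of its own, there is no discrepancy to report.
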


Let $\mc{D}=\{\grD_1,\dots,\grD_L\}$ be a collection of disjoint Borel subsets of $\pd R$, and let $d\nu$ be a finite real Borel measure on $\pd R$. Define the matrix $A(\nu,\mc{D})$ with $ij$-th entry given by
\[ A_{ij}(\nu,\mc{D})=\int_{\grD_j} Y_i(x)d\nu(x), \]
with $i,j\in\{1,\dots,L\}$. This is the period along $K_i$ of the harmonic conjugate of
\[ g_j^{\nu,\mc{D}}(x)=\int_{\grD_j}P(x,y)d\nu(y). \]
We call $A(\nu,\grD)$ the \emph{period matrix} of $\nu$ and $\mc{D}$.

\begin{lemma}[{\cite[Lemma 4.7]{Khavinson}}]\label{Khav2}
	Given a disjoint family of non-empty open arcs $\grD_1,\dots,\grD_L$ in $\pd R$, there exist non-empty arcs $\widetilde{\grD}_1\sbse \grD_1, \dots, \widetilde{\grD}_L\sbse \grD_L$ such that the period matrix for $\grw$ and $\{\widetilde{\grD}_1,\dots,\widetilde{\grD}_L\}$ is non-singular.
\end{lemma}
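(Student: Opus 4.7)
The plan is to reduce non-singularity of the period matrix to a linear independence property of $Y_1,\ldots,Y_L$ on arcs of $\pd R$, and then to construct the subarcs $\widetilde{\grD}_j$ by an inductive procedure.

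First I would observe that each $Y_j$ is real analytic on every connected component of $\pd R$. This follows because $\pd R$ is locally a real analytic curve and $Y_j$, as described in Lemma~\ref{Khav1}, is the boundary trace of the harmonic conjugate of a specific harmonic function on $\overline{R}$, which extends analytically across the real-analytic boundary. Consequently, if $\sum_i c_iY_i$ vanishes on a nonempty open arc of $\pd R$, it vanishes on the entire component containing that arc.

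Second, I would show that $Y_1,\ldots,Y_L$ are linearly independent when restricted to any boundary component $C$ of $\pd R$. Suppose $\sum_i c_iY_i\equiv 0$ on $C$. Then for every finite real Borel measure $\mu$ supported on $C$, Lemma~\ref{Khav1} yields
\[ \sum_{i=1}^L c_i\bigl(\text{period of }{*u_\mu}\text{ around }K_i\bigr)=0. \]
Since $R$ is a finite Riemann surface whose first homology has rank $L$, and any period vector in $\RR^L$ can be realized by the harmonic conjugate of the Poisson integral of some measure supported on the single component $C$---a classical consequence of the solvability of the period problem for harmonic differentials on $R$---we conclude $c=0$.

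Third, with linear independence on each component in hand, I would construct the subarcs inductively. Suppose $\widetilde{\grD}_1,\ldots,\widetilde{\grD}_{j-1}$ have been chosen so that the vectors
\[ v_i=\Bigl(\int_{\widetilde{\grD}_i}Y_\ell\,d\grw\Bigr)_{\ell=1}^L\in\RR^L,\quad i<j, \]
are linearly independent. Choose a nonzero $c$ in the orthogonal complement of $\mathrm{span}(v_1,\ldots,v_{j-1})$, which is nontrivial since $j-1<L$. By the second step, $\sum_i c_iY_i$ does not vanish identically on $\grD_j$, so by continuity there is a subarc $\widetilde{\grD}_j\sbse\grD_j$ with $\int_{\widetilde{\grD}_j}\bigl(\sum_ic_iY_i\bigr)d\grw\neq 0$, i.e.\ $\langle v_j,c\rangle\neq 0$, giving $v_j\notin\mathrm{span}(v_1,\ldots,v_{j-1})$. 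After $L$ steps the columns $v_1,\ldots,v_L$ of $A(\grw,\{\widetilde{\grD}_1,\ldots,\widetilde{\grD}_L\})$ are linearly independent, so the period matrix is non-singular.

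The principal obstacle is the surjectivity claim in the second step---that every $L$-tuple of periods is realized by a Poisson integral of a measure supported on any \emph{single} boundary component of $\pd R$. This is where the specific structure of harmonic differentials on finite Riemann surfaces enters, and it is the only step of the argument that genuinely requires global information about $R$ rather than just the local boundary behaviour captured by the $Y_j$.
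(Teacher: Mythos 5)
The paper offers no proof of this lemma; it is cited directly from Khavinson, so there is no in-paper argument to compare against, and your proposal can only be judged on its own terms. The overall shape of your argument is sound: real-analyticity of the $Y_j$ across $\pd R$, hence unique continuation along each boundary component, followed by a greedy inductive choice of subarcs. Step 1 is plausible (each $Y_j$ is, up to normalization, a tangential derivative on $\pd R$ of the conjugate of a harmonic function with real-analytic boundary data, so it extends analytically across the real-analytic boundary), and step 3 is correct granted steps 1 and 2.

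The genuine gap is the one you flag, and it is more serious than a missing citation: the ``surjectivity'' you invoke in step 2 is not a consequence of the period problem for harmonic differentials, it is \emph{equivalent} to the statement you are trying to prove. The period problem gives a harmonic $1$-form with arbitrary prescribed periods, but gives no control over whether that form can be written as $*dP[\mu]$ with $\mu$ supported on a \emph{single} component $C$. Indeed, the image of $\mu\mapsto\bigl(\int_C Y_j\,d\mu\bigr)_{j=1}^L$ (over real measures $\mu$ on $C$) is all of $\RR^L$ precisely when $Y_1|_C,\dots,Y_L|_C$ are linearly independent; so as written your step 2 is circular. To close it you need an independent argument for linear independence on a component. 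One viable route: identify each $Y_j$ with the normal derivative $\partial_n u_j$ on $\pd R$ of a harmonic $u_j$ that is locally constant on $\pd R$; if $\sum_j c_j Y_j\equiv0$ on $C$ then $u=\sum_j c_j u_j$ has both $u$ locally constant and $\partial_n u\equiv0$ on the open real-analytic arc $C$, so by analytic continuation $u$ is constant near $C$, hence constant on $R$ (since $R$ is connected), hence has vanishing periods, forcing $c=0$ via the nonsingularity of the $L\times L$ period matrix. Something of this flavor is the actual content of Khavinson's lemma; without it the proposal does not stand on its own.
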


\begin{lemma}[{\cite[Lemma 4.12]{Khavinson}} ]\label{Khav3}
	Let $\mc{D}=\{\grD_1,\dots,\grD_L\}$ be a disjoint family of non-empty arcs for which the period matrix for $\grw$ and $\mc{D}$ is non-singular, and let $v=(v_1,\dots,v_L)\in\RR^L$. There exists a $v'\in\RR^L$ such that the function $f=\sum_{j=1}^L v'_jg_j^{\grw,\mc{D}}$ has a harmonic conjugate with periods $v_1,\dots,v_L$ along the curves $K_1,\dots,K_L$, respectively.
\end{lemma}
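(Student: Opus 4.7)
The plan is to reduce the lemma to a linear-algebraic inversion problem, by using Lemma \ref{Khav1} to reinterpret the period matrix $A(\grw,\mc{D})$ as the matrix whose $(i,j)$-entry records the period along $K_i$ of the harmonic conjugate of the basis function $g_j^{\grw,\mc{D}}$.

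First I would observe that for each $j$, the function $g_j^{\grw,\mc{D}}$ is precisely of the Poisson-type form $\int_{\pd R}P(x,y)\,d\mu_j(y)$ appearing in Lemma \ref{Khav1}, with $\mu_j=\chi_{\grD_j}\,d\grw$ a finite (positive, hence real) Borel measure on $\pd R$. Applying Lemma \ref{Khav1} to $\mu_j$, the period of the harmonic conjugate $*g_j^{\grw,\mc{D}}$ along $K_i$ is
\[
  \int_{\pd R}Y_i(y)\,d\mu_j(y)\;=\;\int_{\grD_j}Y_i(y)\,d\grw(y)\;=\;A_{ij}(\grw,\mc{D}).
\]
Thus the $j$-th column of $A(\grw,\mc{D})$ is exactly the vector of periods of $*g_j^{\grw,\mc{D}}$ against the generating cycles $K_1,\dots,K_L$.

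Next I would invoke the $\RR$-linearity of harmonic conjugation and of the period integrals of closed $1$-forms: for any coefficient vector $v'\in\RR^L$, the function $f=\sum_{j=1}^L v'_j g_j^{\grw,\mc{D}}$ is harmonic on $R$ and its harmonic conjugate has period
\[
  \sum_{j=1}^L v'_j\,A_{ij}(\grw,\mc{D})\;=\;\bigl(A(\grw,\mc{D})\,v'\bigr)_i
\]
along $K_i$. Hence producing prescribed periods $v=(v_1,\dots,v_L)$ amounts to solving the linear system $A(\grw,\mc{D})\,v'=v$, and by the standing non-singularity hypothesis this has the unique solution $v'=A(\grw,\mc{D})^{-1}v$.

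There is essentially no serious obstacle to this approach: all of the non-trivial analytic content, namely the existence of the continuous kernels $Y_j$ that convert periods of harmonic conjugates into boundary integrals against measures, is already packaged in Lemma \ref{Khav1}. Once that lemma is in hand, the present statement reduces to the invertibility of a finite square matrix, which is hypothesized.
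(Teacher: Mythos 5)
Your proof is correct. The paper cites this lemma from Khavinson without supplying its own proof, but the remark made immediately after the definition of the period matrix (identifying $A_{ij}(\nu,\mc{D})$ as the period along $K_i$ of the harmonic conjugate of $g_j^{\nu,\mc{D}}$) already encodes the key observation, and your argument simply completes the reduction by linearity and the hypothesized invertibility of $A(\grw,\mc{D})$, which is exactly the intended route.
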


From here on, we assume fix a disjoint family $\mc{D}=\{\grD_1,\dots,\grD_L\}$ of arcs in $\pd R$ such that the period matrix for $\grw$ and $\mc{D}$ is non-singular. For brevity, we set $g_j=g_j^{\grw,\mc{D}}$ for $j=1,\dots,L$. With $f=\sum_{j=1}^L a_j g_j$ for some $a_1,\dots,a_L\in\RR$, we note that $f\of\tau$ has a unique single valued harmonic conjugate $*(f\of\tau)$ on $\DD$ for which $*(f\of\tau)(0)=0$. If $*f$ has periods $b_1,\dots,b_L$, then $*(f\of\tau)\of\grg_j=*(f\of\tau)+b_j$ for $j=1,\dots,L$.

Given a subset $U$ of $\pd R$, we denote by $\chi_U$ the characteristic function of $U$.

\begin{lemma}[{\cite[Lem. 3.11]{AKM}}]\label{AKMFLem}
	Given $b_1,\dots,b_L\in\RR$, there exists a bounded holomorphic function $F$ on  $R$ with finitely many zeros in $R$ satisfying
	\[ \log | F(x) |=-\sum_{i=1}^L b_i\chi_{\grD_i}(x), \quad x\in\pd R. \]
\end{lemma}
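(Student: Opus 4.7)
The plan is to construct $F$ as the exponential of a harmonic function plus its conjugate on $R$, using Green's functions to introduce finitely many zeros that pull all period data into $2\pi\ZZ$, so the exponential descends to a single-valued function on $R$.

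First I would define $u_0(x) = -\sum_{i=1}^L b_i g_i(x)$. Since each $g_i$ is the Poisson-type extension of $\chi_{\grD_i}$ (indeed $g_i(x)=\int_{\grD_i}P(x,y)d\grw(y)$), the function $u_0$ is bounded and harmonic on $R$ with boundary values $-\sum_i b_i\chi_{\grD_i}$ on $\pd R$. By Lemma \ref{Khav1} (applied with $d\mu = -\sum_i b_i \chi_{\grD_i}\,d\grw$), the periods of the locally defined conjugate $*u_0$ along the curves $K_1,\dots,K_L$ are
\[ p_j = -\sum_{i=1}^L b_i A_{ji}(\grw,\mc{D}), \quad j=1,\dots,L. \]
In general these do not lie in $2\pi\ZZ$, so $\exp(u_0+i*u_0)$ fails to descend from the universal cover of $R$ to a single-valued function on $R$.

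Next I would introduce Green's function corrections. For $w\in R$, let $G_R(\cdot,w)$ denote Green's function for $R$ with pole at $w$: harmonic on $R\setminus\{w\}$, vanishing on $\pd R$, with a $\log|x-w|$ singularity at $w$. Its locally defined conjugate has period $2\pi$ around a small loop encircling $w$ and periods $\sigma_1(w),\dots,\sigma_L(w)\in\RR$ along $K_1,\dots,K_L$. Given points $w_1,\dots,w_N\in R$ and positive integers $m_1,\dots,m_N$, the function
\[ u := u_0 - \sum_{k=1}^N m_k G_R(\cdot,w_k) \]
is harmonic off $\{w_1,\dots,w_N\}$, agrees with $u_0$ on $\pd R$, and its conjugate has periods $p_j-\sum_k m_k\sigma_j(w_k)$ along $K_j$ together with integer multiples of $2\pi$ around small loops about each $w_k$. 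If we can choose $\{w_k,m_k\}$ so that $p_j - \sum_k m_k \sigma_j(w_k)\in 2\pi\ZZ$ for every $j$, then $F := \exp(u+i*u)$ is single-valued and holomorphic on $R$; it is bounded because $u\leq u_0\leq 0$, it has zeros of order exactly $m_k$ at $w_k$ and no others, and it satisfies $\log|F|=u_0=-\sum_i b_i\chi_{\grD_i}$ on $\pd R$, as required.

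The main obstacle is showing that suitable $\{w_k,m_k\}$ exist; that is, that the system
\[ \sum_{k=1}^N m_k \sigma_j(w_k) \equiv p_j \pmod{2\pi}, \quad j=1,\dots,L, \]
is solvable with $w_k\in R$ and $m_k\in\NN$. Equivalently, in the torus $\TT^L:=\RR^L/2\pi\ZZ^L$, the additive sub-semigroup generated by the image of the period map $w\mapsto (\sigma_1(w),\dots,\sigma_L(w))$ must contain the prescribed coset. This is an Abel--Jacobi-type surjectivity: the period map is a non-constant real-analytic map from the one-dimensional $R$ into $\TT^L$ whose image is contained in no proper closed subtorus, the latter being a consequence of the non-degeneracy of the period matrix of $\grw$ relative to $\mc{D}$ supplied by Lemma \ref{Khav2}. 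The subgroup generated by such an image is therefore dense in $\TT^L$, and upon taking sufficiently many positive integer multiples one hits the required coset exactly.
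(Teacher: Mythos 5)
Your overall strategy is the standard one for this kind of result: take the bounded harmonic extension $u_0$ of the prescribed boundary data, compute the periods of its conjugate, and then subtract multiples of Green's functions to pull those periods into $2\pi\ZZ$, so that the exponential of the resulting multivalued analytic function is single-valued. The bookkeeping through "$u\le u_0\le 0$ gives boundedness" and "the $\log$-singularities of the Green's functions become zeros of order $m_k$" is all correct. But the crucial solvability step has a genuine gap.

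You reduce matters to showing that the sub-semigroup of $\TT^L=\RR^L/2\pi\ZZ^L$ generated by $\{\sigma(w):w\in R\}$ contains the prescribed coset $p$. You then argue that the image lies in no proper closed subtorus, so the \emph{subgroup} it generates is \emph{dense} in $\TT^L$, and conclude that "upon taking sufficiently many positive integer multiples one hits the required coset exactly." This does not follow: a dense subgroup of a torus need not equal the torus (e.g.\ the cyclic group generated by an irrational rotation), and density of a subgroup certainly does not give exact solvability in the sub-\emph{semigroup}. To get an exact hit you need a surjectivity argument, not a density argument. The usual way is topological: the periods $\sigma_j(w)$ tend to $0$ as $w\to\partial R$ (since $G_R(\cdot,w)\to 0$), so $\sigma$ extends continuously to $\overline{R}$ with $\sigma|\partial R\equiv 0$, and one considers the map $\Phi:\overline{R}^L\to\TT^L$, $\Phi(w_1,\dots,w_L)=\sum_k\sigma(w_k)\bmod 2\pi\ZZ^L$. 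Once one knows that the differential of $\Phi$ is nonsingular at some interior point, a degree/open-and-closed argument on the compact domain $\overline{R}^L$ yields that $\Phi$ is onto, which is what you actually need. Separately, the step where you invoke Lemma \ref{Khav2} (nonsingularity of the period matrix of $\grw$ and $\mc{D}$) to deduce nondegeneracy of the Green's function period map $w\mapsto\sigma(w)$ is asserted but not justified; the two nondegeneracy statements concern different objects, and the link should be made explicit (typically via the relation between $\partial G_R(\cdot,w)/\partial n$ on $\pd R$ and the Poisson kernel $P(w,\cdot)$).
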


Before we construct the matrix-valued inner function appearing in Question B for the case wherein $\gra$ has commutative image, we first construct an analogous scalar-valued inner function.

\begin{lemma}\label{VUE:CharLem}
	Let $\gra$ be a character of $G$. There exists a scalar inner function $\psi$ on $\DD$ such that $\psi H^2(\DD)^G$ has finite codimension in $H^2_\gra(\DD)$.
\end{lemma}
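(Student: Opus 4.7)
The plan is to construct $\psi$ as a product $(F\of\tau)\exp(g\of\tau + i*(g\of\tau))$, where $F \in H^\infty(R)$ is supplied by Lemma \ref{AKMFLem} to provide the finite zero set and a piecewise constant boundary modulus, while the zero-free exponential factor carries the $\gra$-equivariance and has boundary modulus tailored to cancel $|F|$ exactly on $\pd R$.

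Write $\gra(\grg_j) = e^{2\pi i \theta_j}$ for $j=1,\dots,L$, where $\grg_1,\dots,\grg_L$ are the generators of $G$ fixed above. Starting with any disjoint family of non-empty open arcs in $\pd R$ and shrinking them via Lemma \ref{Khav2}, I may assume the arcs $\grD_1,\dots,\grD_L$ have non-singular period matrix $A(\grw,\mc{D})$. Lemma \ref{Khav3} applied with $v=(2\pi\theta_1,\dots,2\pi\theta_L)$ then yields real numbers $a_1,\dots,a_L$ so that $g := \sum_{j=1}^L a_j g_j^{\grw,\mc{D}}$ has a harmonic conjugate whose pullback satisfies $*(g\of\tau)\of\grg_j = *(g\of\tau) + 2\pi\theta_j$ for each $j$. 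The boundary values of $g$ on $\pd R$ are $\sum_j a_j\chi_{\grD_j}$, so Lemma \ref{AKMFLem} applied with $b_j = a_j$ gives a bounded $F\in H^\infty(R)$ with finitely many zeros in $R$ and $|F| = e^{-\sum_j a_j\chi_{\grD_j}}$ on $\pd R$. Setting $\psi = (F\of\tau)\exp(g\of\tau + i*(g\of\tau))$, a direct verification shows that $\psi$ is bounded and holomorphic on $\DD$, that $|\psi| = |F|e^g = 1$ a.e. on $\pd\DD$, and that $\psi\of\grg_j = \gra(\grg_j)\psi$ for each $j$; hence $\psi$ is a scalar inner function in $H^\infty_\gra(\DD)$.

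For the finite-codimension conclusion, let $\Phi_\gra\in H^\infty(\DD)$ be the non-vanishing $\gra$-equivariant function from the scalar case of Corollary \ref{GetPhi} (so $\Phi_\gra^{\pm 1} \in H^\infty$). The function $h := \Phi_\gra^{-1}\psi$ is $G$-invariant and descends to some $\tilde h \in H^\infty(R)$ whose boundary modulus $|\Phi_\gra|^{-1}$ is bounded above and below on $\pd R$, and whose zeros in $R$ coincide with the finite zero set of $F$. For $f\in H^2(R)$, the quotient $f/\tilde h$ is holomorphic on $R$ precisely when $f$ vanishes at each zero of $\tilde h$ to the appropriate multiplicity, in which case the bound on $|\tilde h|^{-1}$ on $\pd R$ places $f/\tilde h$ in $H^2(R)$. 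Consequently $\tilde h H^2(R)$ is a closed subspace of $H^2(R)$ of finite codimension, equal to the total zero-multiplicity of $\tilde h$; transporting back via $\Phi_\gra$ yields that $\psi H^2(\DD)^G$ has finite codimension in $H^2_\gra(\DD)$.

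The main tension is that $\psi$ must be simultaneously inner and $\gra$-equivariant: the exponential factor is the natural $\gra$-equivariant holomorphic object on $\DD$, carrying the character through the prescribed periods of $*g$, but its boundary modulus $e^g$ is not identically $1$. The role of $F$, via Lemma \ref{AKMFLem}, is to cancel precisely this boundary modulus using a holomorphic function whose zeros are all finite, which is exactly what both yields the inner property and ultimately produces the finite-codimension conclusion.
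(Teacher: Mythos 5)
Your construction is correct and is essentially the paper's own proof: apply Lemma~\ref{Khav2} to get arcs with nonsingular period matrix, apply Lemma~\ref{Khav3} to solve for coefficients $a_j$ so that the harmonic conjugate of $g=\sum_j a_j g_j^{\grw,\mc D}$ has periods matching the arguments of $\gra(\grg_j)$, exponentiate the lifted analytic completion to obtain a bounded $\gra$-equivariant function with bounded reciprocal, and multiply by $F\of\tau$ from Lemma~\ref{AKMFLem} to kill the boundary modulus while introducing only finitely many zeros.

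The one place you diverge, and it costs you something, is the finite-codimension step. You route through the abstract $\Phi_\gra$ of Corollary~\ref{GetPhi} and assert $\Phi_\gra^{\pm1}\in H^\infty$; that corollary only guarantees that $M_{\Phi_\gra}$ is a bounded isomorphism of Hilbert spaces, not that the pointwise inverse $\Phi_\gra^{-1}$ is a bounded analytic symbol, so the passage to $\tilde h = \Phi_\gra^{-1}\psi \in H^\infty(R)$ is not quite justified as written. The paper avoids this entirely: the exponential factor $\phi = \exp(\widehat h)$ you already constructed has boundary modulus $e^{\sum_j a_j\chi_{\grD_j}\circ\tau}$, which is bounded above \emph{and} below, so $\phi$ and $1/\phi$ are manifestly both in $H^\infty(\DD)$; hence $\phi$ itself is a legitimate choice of trivializing function giving $H^2_\gra(\DD)=\phi H^2(\DD)^G$, and then $\psi H^2(\DD)^G = \phi\bigl((F\of\tau)H^2(\DD)^G\bigr)$ has finite codimension in $H^2_\gra(\DD)$ because $(F\of\tau)H^2(\DD)^G$ has finite codimension in $H^2(\DD)^G$ (as $F$ has finitely many zeros and $|F|$ is bounded below on $\pd R$). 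You should simply use your own exponential factor in place of the abstract $\Phi_\gra$; then your argument closes cleanly and matches the paper's.

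A very minor notational slip: you write $|\psi|=|F|e^g=1$ a.e.\ on $\pd\DD$, but $|F|$ and $g$ live on $\pd R$; the intended identity is $|\psi| = |F\of\tau|\,e^{g\of\tau}=1$ a.e.\ on $\pd\DD$.
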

\begin{proof}
	Let $a=(a_1,\dots,a_L)\in \RR^L$ be such that $\gra(\grg_\ell)=e^{ia_\ell}$ for $\ell=1,\dots,L$. By Lemma \ref{Khav3}, there is a $b=(b_1,\dots,b_L)\in\RR^L$ such that $\sum_{j=1}^L b_j (*g_j)$ has period $a_i$ along $K_i$ for $i=1,\dots,L$. The multivalued function $h=\sum_{j=1}^Lb_j(g_j+i *g_j)$ is holomorphic on $R$ with the property that $(\mathrm{Re}\: h)(x)=\sum_{\ell=1}^L b_\ell \chi_{\grD_\ell}(x)$ for $x\in \pd R$. Let $\widehat{h}:\DD\to\CC$ be analytic such that $h\of\tau=\widehat{h}$. With $\grW_\ell=\tau^{-1}(\grD_\ell)$ for $\ell=1,\dots,L$, we note that $(\mathrm{Re}\: \widehat{h})(z)=\sum_{i=1}^L b_i\chi_{\grW_i}(z)$ for a.e. $z\in\pd \DD$. The function $\phi=\exp(\widehat{h})$ and its reciprocal function are both bounded and analytic, and
	\[ \phi\of\grg_\ell = e^{ia_\ell}\phi, \quad \ell=1,\dots,L. \]
In particular, $H^2_\gra(\DD)=\phi H^2(\DD)^G$.

Let $F$ be as in Lemma \ref{AKMFLem}, and set $\psi=(F\of\tau)\phi$. We note that $\log|\psi(z)|=0$ for a.e. $z\in\pd\DD$ and that $\psi\of\grg_\ell=e^{ia_\ell}\psi$ for $\ell=1,\dots,L$. It remains to show that $\psi H^2(\DD)^G$ has finite codimension in $H^2(\DD)^G$, and for this we note the following. If $f\in H^2(R)$ has the same zeros at $F$, counting multiplicity, then $f/F\in H^2(R)$. Thus $(F\of\tau)H^2(\DD)^G$ has finite codimension in $H^2(\DD)^G$, whence $\psi H^2(\DD)^G$ has finite codimension in $H^2_\gra(\DD)$. 
\end{proof}

\begin{corollary}
	If $\alpha$ is a representation of $G$ with commutative image, then the inner function $\Psi$ appearing in Question $\mathrm{B}$ exists. In particular, Question $\mathrm{A}$ is answered in the affirmative when $G$ is commutative.
\end{corollary}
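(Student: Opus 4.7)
The plan is to reduce Question B for a representation $\gra$ of commutative image to the scalar case via simultaneous diagonalization, and then to adapt the construction of Lemma \ref{VUE:CharLem} to produce a multiplier sending $H^2_\grb(\DD)$ into $H^2(\DD)^G$ with finite codimension (rather than in the direction given by that lemma).

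Because $\gra(G)$ is a commutative family of unitary operators on $\CC^k$, there is a single unitary $W\in\mathrm{U}(\CC^k)$ with $W\gra(\grg)W^*=\diag(\gra_1(\grg),\dots,\gra_k(\grg))$ for every $\grg\in G$, where each $\gra_i$ is a character of $G$. Left-multiplication by $W$ is an isometric isomorphism of $H^2_\gra(\DD,\CC^k)$ onto $\bigoplus_{i=1}^k H^2_{\gra_i}(\DD)$, and it carries $H^2(\DD,\CC^k)^G$ onto itself since $W$ is constant. Consequently, it suffices to produce, for each character $\grb$ of $G$, a scalar inner function $\gry'_\grb$ on $\DD$ for which $\gry'_\grb H^2_\grb(\DD)$ has finite codimension in $H^2(\DD)^G$; then $\Psi=W^*\diag(\gry'_{\gra_1},\dots,\gry'_{\gra_k})W$ will be matrix-valued inner, will satisfy $\Psi\of\grg=\Psi\cdot\gra(\grg)^{-1}$, and $\Psi H^2_\gra(\DD,\CC^k)$ will have finite codimension in $H^2(\DD,\CC^k)^G$.

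To construct $\gry'_\grb$ when $\grb(\grg_\ell)=e^{ic_\ell}$, I would rerun the proof of Lemma \ref{VUE:CharLem} with $-c$ in place of $c$. Applying Lemma \ref{Khav3} to $v=(-c_1,\dots,-c_L)$ yields a $b\in\RR^L$ such that $\sum_j b_j(*g_j)$ has period $-c_\ell$ along $K_\ell$. The analytic lift $\widehat{h}:\DD\to\CC$ of the multivalued holomorphic function $h=\sum_j b_j(g_j+i\,*g_j)$ on $R$ then satisfies $\widehat{h}\of\grg_\ell=\widehat{h}-ic_\ell$, so $\phi'=\exp(\widehat{h})$ is a bounded analytic function with bounded inverse and $\phi'\of\grg_\ell=\grb(\grg_\ell)^{-1}\phi'$; in particular $\phi' H^2_\grb(\DD)=H^2(\DD)^G$. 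Lemma \ref{AKMFLem} with inputs $b_1,\dots,b_L$ supplies an $F\in H^\infty(R)$ with finitely many zeros and $\log|F|=-\sum_\ell b_\ell\chi_{\grD_\ell}$ on $\pd R$. Setting $\gry'_\grb=(F\of\tau)\phi'$ then gives $|\gry'_\grb|=1$ a.e.\ on $\pd\DD$ and $\gry'_\grb\of\grg_\ell=\grb(\grg_\ell)^{-1}\gry'_\grb$, so $\gry'_\grb$ is inner and $\gry'_\grb H^2_\grb(\DD)=(F\of\tau)H^2(\DD)^G$, which has finite codimension in $H^2(\DD)^G$ since $F$ has only finitely many zeros on $R$.

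Assembling the two steps answers Question B affirmatively for representations of commutative image, and Proposition \ref{EquivQProp} then yields the affirmative answer to Question A when $G$ is commutative. The principal subtlety is that Lemma \ref{VUE:CharLem} produces a multiplier in the opposite direction from what Question B demands; rather than attempting to invert that function---which typically has interior zeros and is therefore not $H^\infty$-invertible---I plan to rerun its construction with the sign of the character periods negated, thereby producing the desired dual multiplier directly.
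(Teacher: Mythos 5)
Your proposal is essentially the same construction as the paper's: simultaneously diagonalize $\gra$ by a constant unitary $W$, reduce to the scalar character case, build the scalar multiplier from the Khavinson period machinery (Lemmas \ref{Khav3} and \ref{AKMFLem}) exactly as in Lemma \ref{VUE:CharLem}, and assemble $\Psi$ as a diagonal matrix conjugated by $W$. That much matches the paper.

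The place where your proposal differs is that you flag, and fix, a direction issue that the paper's proof does not address explicitly, and this extra care is warranted. Lemma \ref{VUE:CharLem} as stated produces a $\gry$ with $\gry H^2(\DD)^G$ of finite codimension \emph{in} $H^2_\gra(\DD)$; the paper's proof of the corollary invokes that lemma directly on the characters $\gra_\ell$, so the resulting diagonal $\Psi=\diag(\gry_1,\dots,\gry_k)$ a priori satisfies $\Psi H^2(\DD,\CC^k)^G\sbse H^2_\gra(\DD,\CC^k)$ of finite codimension, which is the reverse of the inclusion that Question B asks for (namely $\Psi H^2_\gra(\DD,\CC^k)\sbse H^2(\DD,\CC^k)^G$, with $\Psi\of\grg=\Psi\cdot\gra(\grg)^{-1}$). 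Your fix --- running the construction with the periods $-c_\ell$ so that $\gry'_\grb\of\grg_\ell=\grb(\grg_\ell)^{-1}\gry'_\grb$, equivalently applying Lemma \ref{VUE:CharLem} to $\grb^{-1}$ --- produces the multiplier with the correct equivariance, and your verification that $\gry'_\grb H^2_\grb(\DD)=(F\of\tau)H^2(\DD)^G$ (using the bounded invertibility of $\phi'$) is correct and gives the finite codimension. Your check that conjugating by the constant $W$ preserves $H^2(\DD,\CC^k)^G$ and carries $H^2_\gra$ onto $\bigoplus_i H^2_{\gra_i}$ is also correct, and the fact that $W^*\diag(\cdots)W$ commutes past $\gra(\grg)^{-1}$ works precisely because the diagonal entries commute. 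So your argument is correct and, in this respect, slightly more careful than the paper's.
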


\begin{proof}
	Given complex numbers $c_1,\dots,c_k$, we denote by $\mathrm{diag}(c_1,\dots,c_k)$ the diagonal matrix with entries $c_1,\dots,c_k$. The matrices $\gra(\grg_1),\dots,\gra(\grg_L)$ commute, and thus we assume that there are $a_j^{(1)},\dots,a_j^{(k)}\in\RR$ such that $\gra(\grg_j)=\exp(i\:\mathrm{diag}(a_j^{(1)},\dots,a_j^{(k)}))$ for $j=1,\dots,L$. Denote by $\psi_\ell$ the scalar inner function given by Lemma \ref{VUE:CharLem} for the $L$-tuple $(a_1^{(\ell)},\dots,a_L^{(\ell)})$ for $\ell=1,\dots,k$ and set $\Psi=\mathrm{diag}(\psi_1,\dots,\psi_k)$.
\end{proof}


\begin{thebibliography}{99}
  \bibitem{AnD}  M. B. Abrahamse, R. G. Douglas, \emph{A class of subnormal operators related to multiply-connected domains}. Advances in Math. \textbf{19} (1976), no. 1, 106--148.
  \bibitem{AKM} J. Agler, G. Knese, J. E. McCarthy, \emph{Algebraic pairs of isometries}. J. Operator Theory \textbf{67} (2012), no. 1, 215--236.
  \bibitem{AgMcActa} J. Agler, J. E. McCarthy, \emph{Distinguished varieties}, Acta Math. \textbf{194} (2005), 133--153.
  \bibitem{AhrnSar} P. R. Ahern, D. Sarason, \emph{On some hypo-Dirichlet algebras of analytic functions}. Amer. J. Math. \textbf{89} (1967) 932--941.
  \bibitem{Bungart} L. Bungart, \emph{On analytic fiber bundles. I. Holomorphic fiber bundles with infinite dimensional fibers}. Topology \textbf{7} (1967) 55--68. 
  \bibitem{FnK} H. Farkas, I. Kra, \emph{Riemann surfaces}. 2nd ed. Graduate Texts in Mathematics \textbf{71} Springer-Verlag, New York, 1992.
  \bibitem{GamEmb} T. Gamelin, \emph{Embedding Riemann surfaces in maximal ideal spaces}. J. Funct. Anal. \textbf{2} (1968) 123--146.
  \bibitem{GamAndLum} T. Gamelin, G. Lumer \emph{Theory of abstract Hardy spaces and the universal Hardy class}. Advances in Math. \textbf{2} (1968), 118--174.
  \bibitem{Gamelin} T. Gamelin. \emph{Uniform Algebras}. Prentice-Hall, Inc., Englewood Cliffs, NJ, 1969.
  \bibitem{Gunning} R. Gunning, \emph{Lectures on Riemann surfaces}. Princeton Mathematical Notes Princeton University Press, Princeton, NJ, 1966.
  \bibitem{Helson} H. Helson, \emph{Lectures on invariant subspaces}. Academic Press, New York, 1964.
  \bibitem{Kendig} K. Kendig, \emph{Elementary algebraic geometry}. Graduate Texts in Mathematics \textbf{44} Springer-Verlag, New York, 1977.
	\bibitem{Khavinson} S. Ya. Khavinson, \emph{Theory of factorization of single-valued analytic functions on compact Riemann surfaces with a boundary}. Akademiya Nauk SSSR i Moskovskoe Matematicheskoe Obshchestvo. Uspekhi Matematicheskikh Nauk \textbf{44} (1989) no. 4(268) 155--189, 256; translation in Russian Math. Surveys \textbf{44} (1989) no. 4, 113--156.
  \bibitem{Rudin} W. Rudin, \emph{Analytic functions of class} $H^p$. Trans. Amer. Math. Soc. \textbf{78} (1955) 46-66.
  \bibitem{Stout} E. Stout, \emph{On some algebras of analytic functions on finite open Riemann surfaces}. Math. Z. \textbf{92} (1966) 366-379
  \bibitem{SzNagyFoias}  B. Sz.-Nagy, C. Foias, H. Bercovici, L Kérchy, \emph{Harmonic analysis of operators on Hilbert space}. 2nd ed. Revised and enlarged edition. Universitext. Springer, New York, 2010.
  \bibitem{Timko} E. Timko, \emph{On polynomial $n$-tuples of commuting isometries}, J. Operator Theory, \textbf{77} (2017), 391--420. 
  \bibitem{Tsuji} M. Tsuji, \emph{Potential theory in modern function theory}. Reprinting of the 1959 original. Chelsea Publishing Co., New York, 1975.
  \bibitem{Wermer} J. Wermer. \emph{Analytic disks in maximal ideal spaces}. Amer. J. Math., \textbf{86} (1964) 161--170.
\end{thebibliography}
\end{document}